\def\cov{\mbox{cov}}
\def\var{\mbox{Var}}
\def\DKhi{\mathrm{DKhi}}
\def\EDKhi{\mathrm{EDKhi}}
\def\X{\mathbf{X}}
\def\1{{\mathbf 1}}
\def\R{{\mathbb{R}}}
\def\E{{\mathbb{E}}}
\def\P{{\mathbb{P}}}
\newcommand{\eref}[1]{(\ref{#1})}
\newcommand{\pa}[1]{\left({#1}\right)}
\newcommand{\cro}[1]{\left[{#1}\right]}
\newcommand{\ac}[1]{\left\{{#1}\right\}}
\newtheorem{thrm}{Theorem}[section]
\newtheorem{prte}[thrm]{Proposition}
\newtheorem{lemma}[thrm]{Lemma}
\newcommand{\degr}{\mathrm{deg}} 
\newcommand{\pen}{\mathrm{pen}}
\newcommand{\LA}{\mathrm{LA}}
\newcommand{\EW}{\mathrm{EW}}
\newcommand{\QE}{\mathrm{QE}}
\newcommand{\CO}{\mathrm{C0}}
\newcommand{\nei}{\mathrm{ne}}
\newcommand{\crit}{\mathrm{Crit}}
\newcommand{\supp}{\mathrm{supp}}
\newcommand{\MSEP}{\mathrm{MSEP}}
\def\B{{\mathbb{B}}}
\def\G{\mathcal{G}}
\def\thetat{\tilde \theta}
\def\eps{\boldsymbol{\epsilon}}
\begin{document}
\begin{frontmatter}
\title{Graph selection with GGMselect}
\runtitle{GGMselect}

\begin{aug}
\author{\fnms{Christophe} \snm{Giraud}\ead[label=e1]{Christophe.Giraud@polytechnique.edu}},
\author{\fnms{Sylvie} \snm{Huet}\ead[label=e2]{Sylvie.Huet@jouy.inra.fr}}
\and
\author{\fnms{Nicolas} \snm{Verzelen}\ead[label=e3]{nicolas.verzelen@supagro.inra.fr}}

\runauthor{Giraud,  Huet and  Verzelen}

\end{aug}

\begin{abstract}
Applications on inference of biological networks have raised a strong interest in the problem 
of graph estimation in high-dimensional Gaussian graphical models. To handle this problem, we 
propose a two-stage procedure which first builds a family of candidate graphs from the data, 
and then selects one graph among this family according to a dedicated criterion. This estimation 
procedure is shown to be consistent in a high-dimensional setting, and its risk is controlled by a  
non-asymptotic oracle-like inequality. The procedure is tested on a real data set concerning gene 
expression data, and its performances are assessed on the basis of a large numerical study. 
The procedure is implemented in the R-package GGMselect available on the CRAN. 
\end{abstract}

\begin{keyword}[class=AMS]
\kwd[Primary ]{62G08}
\kwd[; secondary ]{62J05}
\end{keyword}
	
\begin{keyword}
\kwd{Gaussian graphical model}
\kwd{Model selection}
\kwd{Penalized empirical risk}
\end{keyword}

\end{frontmatter}

\section{Introduction}
Biotechnological
developments in proteomics or transcriptomics enable to produce a
huge amount of  data. One of the challenges
for the statistician  is to infer from these data the regulation network of a
family of  genes (or proteins). The task is difficult due to the very
high-dimensional nature of the data and
the small sample size. For example, microarrays measure the expression
levels of a few thousand genes and the sample size $n$
is no more than a few tens. 
When no additional information is available,
the Gaussian graphical modeling, denoted GGM, has been proposed as a tool to
handle this
issue, see e.g. \cite{KW,Detal,WY}.  
 Graphical modeling is based on the notion of
   conditional dependency.
The principle underlying the GGM
    approach is the following : the existence of a regulation
    dependence between two genes corresponds to the
    existence of  a conditional dependence between their gene
    expression levels.
The conditional dependences between the gene expression levels
are represented by a graph $G$, where each node represents a gene and where an edge is set 
between two nodes $a$ and $b$ if there exists a conditional dependence between  their gene
    expression levels. According to the
  GGM principle, this graph $G$ coincides with the gene regulation network.

Let us describe more precisely the GGM setting. The gene expression levels  $(X_1 , . . . , X_p)$ of $p$ genes are
modeled
by a centered Gaussian law with covariance matrix $\Sigma$, denoted $\P_\Sigma$.
This
law $\P_\Sigma$ is a so-called graphical model according to a graph $G$,  if for
any genes $a$ and $b$ that are not
neighbours in $G$, the variables $X_{a}$ and $X_{b}$ are independent
conditionally on the remaining variables. Roughly
  speaking,  if genes $a$ and $b$  are not
neighbours in $G$, the variables $X_{a}$ and $X_{b}$ are
uncorrelated when the values of the remaining variables are fixed. There exists a unique graph
$G_{\Sigma}$ which is  minimal for the inclusion and  such that $\P_{\Sigma}$ is
a graphical model according to $G_{\Sigma}$. An edge between  $a$ and $b$ in $G_{\Sigma}$ therefore
represents the existence of a conditional dependence between the variables $X_{a}$ and $X_{b}$. As mentioned before, 
$G_{\Sigma}$ is a graph of interest since it shall coincide with the gene regulation network.
Our aim in this paper is to estimate this graph
from microarrays data
which are assumed to be a $n$-sample of  the law $\P_{\Sigma}$. We will pay a
special attention to the case
where $n<p$ and we  assume in the following that $\Sigma$ is
non-singular.


The problem of graph estimation in Gaussian graphical model when the sample size
$n$ is smaller (or much smaller) than the number $p$ of variables  is a current 
active field of research in
statistics. Many estimation procedures have been proposed  recently to perform
graph
estimation in Gaussian graphical model when $n<p$. A first class of procedures
is based on multiple testing on empirical partial covariance. If
$G_{\Sigma}$ denotes the (minimal) graph of the law $\P_{\Sigma}$, there is an
edge in $G_{\Sigma}$ between $a$ and $b$, if and only if the conditional
covariance of $X_{a}$ and $X_{b}$ given all the other variables is non-zero.
When $n<p$, the empirical version of the latter conditional covariance cannot be
computed, so several papers suggest to use instead the empirical conditional
covariance of $X_{a}$ and $X_{b}$ given $\ac{X_{s},\ s\in S}$ for some subsets
$S$ of $\ac{1,\ldots,p}\setminus\ac{a,b}$ with cardinality less than $n-2$. A
multiple testing procedure is then applied to detect if the conditional 
covariance $\cov(X_{a},X_{b} | X_{s},\ s\in S)$ is non-zero. Wille and B\"uhlmann~\cite{WB06}
restrict to the sets $S$ of cardinality less or equal to
one, Castelo and Roverato~\cite{CR06} consider the sets $S$ with cardinality at most $q$ (for
some fixed $q$) and Spirtes et al.~\cite{SGS00} (see also \cite{KB08}) propose a
procedure which avoid an exhaustive search
over all $S$.
A second class of procedures relies on the fact that the entries $\Omega_{a,b}$
of the inverse covariance matrix $\Omega=\Sigma^{-1}$ are non-zero if and only
if there is an edge between $a$ and $b$ in $G_{\Sigma}$. Several papers then
suggest to perform a sparse estimation of $\Omega$ in order to estimate the
graph $G_{\Sigma}$, see \cite{HLPL,YL,BGA,FHT,Fan08}. They propose to maximize the
log-likelihood of
$\Omega$ under $l^1$ constraints to enforce sparsity and they design
optimization algorithms to perform this maximization.
A third class of procedures uses the fact that the coefficients
$\theta_{a,b}$ of the regression of $X_{a}$ on $\ac{X_{b},\ b\neq a}$ are
non-zeros if and only if there is an edge between $a$ and $b$ in $G_{\Sigma}$.
Meinshausen and B\"uhlmann~\cite{MB06} and Rocha et al.~\cite{RZY09}
perform regressions with $l^1$ constraints, whereas Giraud~\cite{giraud08} (see
also \cite{Verzelen08}) proposes an exhaustive search over the set of
sparse graphs to obtain a
sparse estimate of the matrix $\theta$ and then detect the graph $G_{\Sigma}$.
Finally, a series of papers (e.g. \cite{wongcarter02,dellaportas2003,scott}) investigate 
a Bayesian approach to estimate the graph.

In this paper, we propose a new estimation scheme which combines the good
properties of different procedures. On the one hand, the procedures based
on the
empirical covariance or on $l^1$ regularisation share some nice computational
properties and they can handle several hundred of variables $X_{1},\ldots,
X_{p}$. Nevertheless, the theoretical results assessing their statistical
accuracy are either of asymptotic nature or rely on strong assumptions on the
covariance \cite{fan_covariance,rothman08}. Moreover, their performance heavily
depends on one (or several) tuning parameter, which is usually not dimensionless
and whose optimal value is unknown. To cope with this
issue, many authors propose to apply cross-validation or the BIC
criterion. However, the BIC criterion often overfits in a high
dimensional setting (see \cite{BGH09} and the simulations Section
\ref{section_simulations}) and cross-validation offers little theoretical
warranty.
On the other hand, the method proposed by Giraud~\cite{giraud08} has a good
statistical accuracy and strong theoretical
results have been established, but its computational complexity 
 is huge and it cannot be performed when the number $p$ of
variables is larger than a few tens. \\

Our strategy here is to build a data-driven
family of candidate graphs using several fast above-mentioned procedures and
then to apply the selection criterion presented in~\cite{giraud08} to select
one
graph among them.  We show that this criterion  can be used both
\begin{itemize}
\item[(i)] to choose the tuning parameter(s) of any estimation procedure with no
need of any additional knowledge. As such, our criterion is an
alternative to the BIC criterion and cross-validation.
\item[(ii)] and to compare several graphs produced by various estimation procedures,
including graphs built from a priori knowledge.
\end{itemize}

Our estimation procedure can handle several hundred
of variables $X_{1},\ldots,X_{p}$ and presents good statistical properties.  
It is proved to be consistent in a high-dimensional setting.  Furthermore, its risk is controlled by a non-asymptotic oracle-like
inequality. This means that the  risk of our estimator is almost as small as if we knew in advance the best graph in the data-driven family. A more formalized definition of the oracle property is given in Section \ref{section_oracle}.
 In contrast to other results in the
literature~\cite{giraud08,MB06}, this oracle inequality allows to deal with a
data-driven collection of graphs.
In addition, we propose families of candidate graphs which work well in practice
as shown  on simulated examples. 
Finally, the  procedure is implemented in the $R$-package {\it GGMselect}  
available on 
the {\it Comprehensive R Archive Network}. \url{http://cran.r-project.org/}

\medskip

The remaining of the paper is organized as follows. We describe the estimation
procedure in the next section and state some theoretical results on its
statistical accuracy in Section~\ref{section_theorique}. In Section~\ref{section_simulations}, we carry out some numerical
experiments in order to assess the
  performances of our procedure. In Section~\ref{section_ex}, we test
  our method on a real data set concerning gene expression data
  provided in~\cite{Hess} and already analyzed in~\cite{Ambroise}.  Section~\ref{section_preuves} is devoted to the proofs. Details on the collections of graphs are postponed to Section~\ref{section_family_details}.\\

\medskip

\noindent 
\textit{Notations.} To estimate the graph $G_{\Sigma}$, we will start
from a $n$-sample $X^{(1)},\ldots,X^{(n)}$ of the law $\P_{\Sigma}$. We denote
by $\X$ the $n\times p$ matrix whose rows are given by the vectors $X^{(i)}$,
namely $\X_{i,a}=X^{(i)}_{a}$ for $i=1,\dots,n$ and $a=1,\ldots,p$. We write
$\X_{a}$ for the $a^{\textrm{th}}$ column of $\X$. 
We also set $\Gamma=\ac{1,\ldots,p}$ and for any graph $G$ with nodes indexed by
$\Gamma$, we write $d_{a}(G)$ for the degree of the node $a$ in the graph $G$
(which is the number of edges incident to $a$) and $\degr(G)=\max_{a\in
\Gamma}d_{a}(G)$ for the degree of $G$. Moreover, the notation
$a\stackrel{G}{\sim}b$ means that the nodes $a$ and $b$ are neighbours in the
graph $G$.
Finally, we write $\Theta$ for the set  of $p\times p$ matrices with 0 on the
diagonal, $\|\cdot\|_{q\times p}$ for the Frobenius norm on $q\times p$ matrices
$$\|A\|^2_{q\times p}=\textrm{Tr}(A^TA)=\sum_{i=1}^q\sum_{j=1}^pA_{i,j}^2\ ,$$
$\|\cdot\|_{n}$ for the Euclidean norm on $\R^n$ divided by $\sqrt n$, and for
any $\beta\in\R^p$ we define supp$(\beta)$ as the set of the labels
$a\in\Gamma$ such that $\beta_{a}\neq 0$.

\section{Estimation procedure}\label{procedure}
GGMselect is a two-stage estimation procedure which first builds a data-driven
family $\widehat \G$ of candidate graphs and then applies a selection procedure
to pick one graph among these. We present the selection procedure in the next
paragraph and then describe different possible choices for the family of
candidate graphs $\widehat \G$.

\subsection{\label{SelProc.st}Selection procedure} 
We assume here that we have at hand a family $\widehat \G$ of candidate graphs,
which all have a degree smaller than $n-2$. 
To select a graph $\widehat G$ among the family $\widehat\G$, we use the
selection criterion introduced in~\cite{giraud08}. 
We write $\theta$ for the $p\times p$ matrix such that 
$$\E_{\Sigma}\cro{X_{a}\big| X_{b},\ b\neq a}=\sum_{b\neq
a}\theta_{a,b}X_{b}\quad\textrm{and}\quad\theta_{a,a}=0\quad \textrm{for all } a
\in \ac{1,\ldots,p}.$$
The matrix $\theta$ minimizes $\|\Sigma^{1/2}(I-\theta')\|_{p\times p}$ over the
set $\Theta$ of $p\times p$ matrices $\theta'$ with 0 on the diagonal. Since
$\X^T\X/n$ is an empirical version of $\Sigma$, an empirical version of
$\|\Sigma^{1/2}(I-\theta)\|_{p\times p}$ is $\|\X(I-\theta)\|_{n\times p}$
divided by $\sqrt{n}$. Therefore, for any graph $G$ in $\widehat\G$, we
associate an estimator $\widehat \theta_{G}$ of $\theta$ by setting
 \begin{equation} \label{thetahat}
 \widehat\theta_{G}=\textrm{argmin}\ac{\|\X(I-\theta')\|_{n\times
p}:\theta'\in\Theta_{G}},
 \end{equation}
 where $\Theta_{G}$ is the set of $p\times p$ matrices $\theta'$ such that
$\theta'_{a,b}$ is non-zero if and only if there is an edge between $a$ and $b$
in $G$.
 
 Finally, we select a graph $\widehat G$ in $\widehat \G$ by taking any
minimizer over $\widehat \G$ of the criterion 
 \begin{equation}\label{definition_critere}
\crit(G)= \sum_{a=1}^p\left[\|{\bf X}_a-{\bf X}[\widehat{\theta}_{G}]_{a}
\|_n^2\left(1+\frac{\pen[d_{a}(G)]}{n-d_{a}(G)}\right)\right]\ ,
\end{equation}
where $d_{a}(G)$ is the degree of the node $a$ in the graph $G$ and  the penalty
function $\pen: \mathbb{N}\rightarrow \mathbb{R}^+$ is of the form of the
penalties introduced in Baraud et al.~\cite{BGH09} for the fixed design
regression model.
 To compute this penalty, we define for any integers $d$ and $N$ the
$\text{DKhi}$ function by 
\begin{eqnarray*}
 \text{DKhi}(d,N,x)=\mathbb{P}\left(F_{d+2,N}\geq \frac{x}{d+2}\right)-
\frac{x}{d}\,\mathbb{P}\left(F_{d,N+2}\geq \frac{N+2}{Nd}x\right) , \, x>0\,,
\end{eqnarray*}
where $F_{d,N}$ denotes a Fisher random variable with $d$ and $N$ degrees of
freedom. The function $x\mapsto \text{DKhi}(d,N,x)$ is decreasing and we write
$\text{EDKhi}[d,N,x]$ for its inverse, see \cite{BGH09} Sect. 6.1 for more
details. Then, we fix some constant $K>1$ and set       
\begin{equation}\label{definition_penalite}
 \pen (d) =
K\,\frac{n-d}{n-d-1}\,\text{EDKhi}\left[d+1,n-d-1,\left(\binom{p-1}{d}
(d+1)^2\right)^{-1}\right].
\end{equation}
When $d$ remains small compared to $n$, the penalty function increases
approximately linearly with $d$. Actually, when  $ d\leq \gamma \, n/
\pa{2\pa{1.1+\sqrt{\log p}}^2}$ for some $\gamma<1,$
we approximately have  for large values of $p$ and $n$
$$\pen(d)\lesssim K \pa{1+e^{\gamma}\sqrt{2\log p}}^2(d+1),$$
see Proposition~4 in \cite{BGH09} for an exact bound.

The selection procedure depends on a dimensionless tuning parameter $K$. 
A larger value for $K$ yields a procedure more conservative. In theory (and in
practice) $K$ has to be larger than one. In our simulations, we set $K=2.5$.

\subsection{\label{FamGr.st}Family $\widehat\G$ of candidate graphs}
The computational complexity of the minimization of the
criterion~(\ref{definition_critere}) over the family $\widehat \G$ is linear
with respect to its size. In particular, minimizing~(\ref{definition_critere})
over all the graphs with degree smaller than some integer $D$, as proposed
in~\cite{giraud08}, is intractable when $p$ is larger than a few tens. To
overcome this issue, we propose to build a much smaller (data-driven) family
$\widehat\G$ of candidate graphs, with the help of various fast algorithms
dedicated to graph estimation.

 Since the procedure applies for any family $\widehat\G$, GGMselect 
allows to select the tuning parameter(s) of any graph estimation procedure and
also to compare any collection of
estimation procedures. Nevertheless, we advise in practice to choose one of the
four
families  of candidate graphs $\widehat{\mathcal{G}}_{\EW}$,
$\widehat{\mathcal{G}}_{\CO 1}$,
$\widehat{\mathcal{G}}_{\LA}$, and $\widehat{\mathcal{G}}_{\QE}$ presented
below, or the union
of them.
These families have been chosen on the
basis of theoretical results and simulation studies.

 In the following, we explain how to tune and compare graph
estimation procedures with GGMselect.
Afterwards, we describe the four above-mentioned
families, provide algorithms to
compute them efficiently, and discuss their computational complexity and their
size. Each family depends on an integer $D$, smaller than $n-2$, which
corresponds to the maximal degree of the graphs in this family.

\subsubsection{Tuning a procedure and comparing several ones}
Suppose we are given an estimation procedure $P$
depending on a tuning parameter $\lambda>0$ whose optimal value is
unknown or depends on unknown quantities. Let us denote
  by $\widehat{\mathcal{G}}_P$ the collection of graphs estimated
  using this procedure $P$ :
\begin{eqnarray}\label{collection_tuning}
\widehat{\mathcal{G}}_P=
\left\{\widehat{G}_{P}(\lambda)\ ,\,
\lambda>0\text{ and }\degr(G_{P}(\lambda))\leq D\right\}.
\end{eqnarray}
We propose to choose $\lambda$ by minimizing the
criterion~(\ref{definition_critere}) over the collection
$\widehat{\mathcal{G}}_P$. Thus, we  get an estimated graph 
$\widehat{G}_{P} = \widehat{G}_{P}(\widehat{\lambda}_{P})$.
Theorems \ref{proposition_risque} and
\ref{proposition_consistance} in Section~\ref{section_theorique} state that
GGMselect almost selects the best graph among
this collection. \\

Assume now that we have at hand a collection $\mathcal{P}$ of estimation
procedures which possibly depends on
tuning parameters. For any procedure $P\in\mathcal{P}$, we compute the
collection $\widehat{\mathcal{G}}_P$ either defined by (\ref{collection_tuning})
if $P$ depends on tuning
  parameters  or by
$\{\widehat{G}_P\}$ if not. Then, we propose to select a
procedure $\hat P$ and a graph 
  $\widehat{G}_{\widehat{P}}$   by minimizing the
criterion~(\ref{definition_critere})  over the collection 
\begin{eqnarray}\label{collection_mix}
\widehat{\mathcal{G}}_{\mathcal{P}}= \left\{\widehat{\G}_{P}\ ,\,
P\in \mathcal{P}\right\}\ .
\end{eqnarray}
Again, Theorems \ref{proposition_risque} and
\ref{proposition_consistance}  in Section~\ref{section_theorique} ensure  that
GGMselect almost selects the best graph among
the collection $\widehat{\mathcal{G}}_{\mathcal{P}}$.\\

Next, we briefly describe the four families of candidate graphs
$\widehat\G_{\CO 1}$, $\widehat\G_{\LA}$, $\widehat\G_{\EW}$ and $\widehat\G_{\QE}$ that we advise to use, the details being postponed to Section \ref{section_family_details}. Except $\widehat\G_{\QE}$, all the other families are built from estimators in the literature that depend on an unknown tuning parameter. On the one hand, GGMselect allows to tune these procedures. On the other hand, GGMselect allows to select an estimator by combining these different procedures.

\subsubsection{C01 family $\widehat{\mathcal{G}}_{\CO 1}$}

The family $\widehat{\mathcal{G}}_{\CO 1}$
derives from the estimation procedure proposed by Wille and B\"uhlmann~\cite{WB06} and is based on the 0-1 \emph{conditional independence graph}
$G_{01}$. This graph  is defined as follows.
For each pair of nodes $(a,b)$, we write $R_{a,b|\emptyset}$ for the correlation
between the variables $X_a$ and $X_b$ and  $R_{a,b|c}$ for the correlation of
$X_a$ and $X_b$ conditionally on $X_c$. Then, there is an edge between $a$ and
$b$ in $G_{01}$, if and only if 
 $R_{a,b|\emptyset}\neq 0$ and $R_{a,b|c}\neq 0$ for all $c\in
\Gamma\setminus\{a,b\}$, viz
\begin{eqnarray}
a\stackrel{{G}_{01}}{\sim}b &\Longleftrightarrow&   \min\left\{|R_{a,b|c}|,\
c\in \{\emptyset\}\cup\Gamma\setminus\{a,b\} \right\}>0\,.
\end{eqnarray}
 Although the 0-1 conditional independence graph $G_{01}$ does not  usually
coincide  with the graph $G_{\Sigma}$, there is a close connection between both
graphs in some cases (see Wille and B\"uhlmann). Given a number $0<\alpha<1$, 
Wille and B\"uhlmann propose to estimate $G_{01}$ by a graph $\widehat{G}_{01,\alpha}$ built from a collection of likelihood ratio test level of $\alpha$. The graph  $\widehat{G}_{01,\alpha}$  becomes more connected when $\alpha$ increases.  
We define the family
$\widehat{\mathcal{G}}_{\CO 1}$ as the set of graphs
$\widehat{G}_{01,\alpha}$ with all levels $\alpha$ small enough to ensure that
$\degr(\widehat{G}_{01,\alpha})\leq D$.\\

\noindent 
\textit{Complexity}. The computation of $\widehat{\mathcal{G}}_{\CO 1}$ goes very fast since its complexity
is of order $np^3$ (see Section \ref{section_family_details}). The size of the family $\widehat{\mathcal{G}}_{\CO 1}$ is
smaller than $pD$. Computational times for some examples are
given in Section~\ref{section_comparaison_temporel}.

 \subsubsection{Lasso-And  family $\widehat{\mathcal{G}}_{\LA}$}
 
The Lasso-And family
$\widehat{\mathcal{G}}_{\LA}$ derives from the estimation procedure proposed by
Meinshausen and B\"uhlmann~\cite{MB06} and is based on the 
LARS-lasso algorithm~\cite{lars}. 
For any $\lambda>0$, we define the $p\times p$ matrix
$\widehat{\theta}^{\lambda}$  by
\begin{eqnarray}\label{lasso_general}
 \widehat{\theta}^{\lambda}=\arg\!\min\left\{\|{\X}-{\X}\theta'\|_{n\times
p}^2+\lambda\|\theta'\|_1: \ \theta'\in\Theta \right\}, 
\end{eqnarray}
where $\Theta$ is the set of $p\times p$ matrices with 0 on the diagonal and
$\|\theta'\|_1=\sum_{a\neq b}|\theta'_{a,b}|$. Then, we define the graph
$\widehat{G}^{\lambda}_{\text{and}}$ by setting  an edge between $a$ and $b$  if
both $\widehat{\theta}_{a,b}^{\lambda}$ \underline{and}
$\widehat{\theta}_{b,a}^{\lambda}$ are non-zero.
 This graph $\widehat{G}^{\lambda}_{\text{and}}$ is exactly the estimator~(7)
introduced in \cite{MB06}. The size of
$\widehat{G}^{\lambda}_{\text{and}}$ has a tendency to increase when the tuning
parameter $\lambda$ decreases. Hence, we define the family
$\widehat{\mathcal{G}}_{\LA}$ as the set of graphs
$\widehat{G}^{\lambda}_{\text{and}}$ with all $\lambda$ large enough to ensure that
$\degr(\widehat{G}^{\lambda}_{\text{and}})\leq D$.\\

\noindent 
\textit{Complexity}. The complexity of the LARS-lasso algorithm is
unknown in general. Nevertheless, according to 
Efron et al.~\cite{lars} the algorithm requires $O(np(n\wedge p))$ operations in most cases.
Hence, the whole complexity of the LA algorithm is generally of the order 
$p^2n(n\wedge p)$ (see Section \ref{section_family_details}). Finally, the size of the family $\widehat\G_{\LA}$ cannot be
bounded uniformly, but  it remains  smaller than  $pD$ in practice.

\subsubsection{Adaptive lasso family $\widehat\G_{\EW}$}
The family $\widehat\G_{\EW}$ is a modified version of
$\widehat\G_{\LA}$ inspired by the adaptive lasso~\cite{zou_adaptive}. The major difference between $\widehat\G_{\EW}$ and 
$\widehat\G_{\LA}$ lies in  the replacement of the $l^1$ norm $\|\theta'\|_{1}$
in~(\ref{lasso_general})  by $\|\theta'/\widehat\theta^{\mathrm{init}}\|_{1}$,
where $\widehat\theta^{\mathrm{init}}$ is a preliminary estimator of $\theta$
and $\theta'/\widehat\theta^{\mathrm{init}}$ stands for the matrix with entries
$(\theta'/\widehat\theta^{\mathrm{init}})_{a,b}=\theta'_{a,b}/\widehat\theta^{
\mathrm{init}}_{a,b}$. Zou suggests to take for $\widehat\theta^{\mathrm{init}}$
a ridge estimator. Here, we propose to use instead  the Exponential Weights 
estimator $\widehat\theta^{EW}$ of Dalalyan and Tsybakov~\cite{DT08,DT09}. The
choice of this estimator appears more natural to us since it is designed for the
sparse setting and enjoys  nice theoretical properties. Moreover, we have
observed on some simulated examples, that the adaptive lasso with the
Exponential Weights  initial estimator performs much better than the adaptive
lasso with the ridge initial estimator.
 
Given $\lambda>0$,  $\widehat{\theta}^{\EW,\lambda}$ is the adaptive lasso estimator of $\theta$ with initial estimator $\widehat{\theta}^{\EW}$.
We define the graph $\widehat{G}^{\EW,\lambda}_{\text{or}}$
by setting an edge between $a$ and $b$ if either
 $\widehat{\theta}_{b,a}^{EW,\lambda}$ \underline{or}
$\widehat{\theta}_{a,b}^{EW,\lambda}$ is non-zero.
 Finally, the family  $\widehat{\mathcal{G}}_{\EW}$ is the set of graphs
$\widehat{G}^{\EW,\lambda}_{\text{or}}$ with $\lambda$ large enough to ensure that
$\degr(\widehat{G}^{\EW,\lambda}_{\text{or}})\leq D$.\\

\noindent 
\textit{Complexity}. 
The complexity of the estimation $\widehat{\theta}^{\EW}$  depends on the choices of the tuning
parameters for the Exponential Weights estimator (see Section \ref{section_family_details}). Some examples are given in Section
\ref{section_comparaison_temporel}.
The complexity of the other computations is the same as for  the  $\LA$-algorithm and is  of
the order $p^2n(n\wedge p)$ in practice.
Finally, as for $\widehat{\mathcal{G}}_{\LA}$, we do not know a general bound
for the size of $\widehat{\mathcal{G}}_{\EW}$, but it remains smaller than $pD$
in practice.

\subsubsection{Quasi-exhaustive family $\widehat\G_{\QE}$}
Roughly, the idea is to break down  the minimization of
the criterion~(\ref{definition_critere}) over all the graphs of degree at most
$D$ into $p$ independent problems. For each node $a\in\Gamma$, we estimate the 
neighborhood of $a$ by
$$\widehat{\nei}(a)=\textrm{argmin}\ac{\|\X_{a}-\textrm{Proj}_{V_{S}}(\X_{a}
)\|_n^2\left(1+\frac{\pen(|S|)}{n-|S|}\right): \ S\subset{\Gamma\setminus\{a\}}
\textrm{ and } |S|\leq D},$$
where $\pen$ is the penalty function (\ref{definition_penalite}) and
$\textrm{Proj}_{V_{S}}$ denotes the orthogonal projection from $\R^n$ onto
$V_{S}=\ac{\X\beta:  \beta\in\R^p\textrm{ and supp}(\beta)=S}$.
We know from \cite{Verzelen08} that $\widehat{\nei}(a)$ is a good
estimator of the true neighborhood of $a$, from a non-asymptotic point of view.
We then build two nested graphs $\widehat{G}_{K,\text{and}}$ and
$\widehat{G}_{K,\text{or}}$ in a similar way as in \cite{MB06}. Namely, there is an edge between $a$ and $b$ in $\widehat{G}_{K,\text{and}}$ if $a\in
\widehat{\nei}(b)\textrm{ \underline{and} } b\in \widehat{\nei}(a)$ and there is an edge between $a$ and $b$ in  $\widehat{G}_{K,\text{or}}$ if $a\in
\widehat{\nei}(b)\textrm{ \underline{or} } b\in \widehat{\nei}(a)$.
The family $\widehat{\mathcal{G}}_{\QE}$ is defined as the collection of all the
graphs that lie between $\widehat{G}_{K,\text{and}}$ and
$\widehat{G}_{K,\text{or}}$
\begin{eqnarray*}
\widehat{\mathcal{G}}_{\QE} = \left\{G,\ \widehat{G}_{K,\text{and}} \subset G
\subset \widehat{G}_{K,\text{or}}\text{ and }\degr(G)\leq D\right\}.
\end{eqnarray*}
It is likely that the graph $\widehat G_{\textrm{exhaustive}}$ which
minimizes~(\ref{definition_critere}) over all the graphs of degree at most $D$
belongs to the family $\widehat{\mathcal{G}}_{\QE}$.
In such a case, the minimizer $\widehat{G}_{\QE}$ of the
criterion~(\ref{definition_critere}) over  $\widehat\G_{\QE}$ coincides with the
estimator $\widehat G_{\textrm{exhaustive}}$.\\

\noindent 
\textit{Complexity}. The complexity of the computation of the
collections $\widehat{\nei}(a)$ is much smaller than the complexity of the
computation of $\widehat{G}_{\textrm{exhaustive}}$. Nevertheless, it still
remains of order $np^{D+1}D^3$ and  the size of the family
$\widehat{\mathcal{G}}_{\QE}$ can be of order $2^{pD/2}$ in the worst cases.
However, for sparse graphs $G_{\Sigma}$, the graphs $\widehat{G}_{K,\text{and}}$
and $\widehat{G}_{K,\text{or}}$ are  quite similar in practice, which makes the
size of $\widehat{\mathcal{G}}_{\QE}$ much smaller. The procedure then remains
tractable
for $p$ and $D$ reasonably small.

\section{Theoretical results} \label{section_theorique}
In order to assess the performance
of our selection procedure, we state in this section two kinds of theoretical results: a non-asymptotic
oracle-like inequality concerning the estimation of $\theta$ and a consistency
result for the estimation of $G_{\Sigma}$.

\subsection{A non-asymptotic oracle-like inequality}\label{section_oracle}
 We associate to the graph $\widehat G$ selected by the
procedure of Section~\ref{procedure}, the estimator
$\thetat=\widehat\theta_{\widehat G}$ of the matrix $\theta$, where $\widehat
\theta_{G}$ is given by~(\ref{thetahat}) for any graph $G\in\widehat\G$. The
quality of the estimation of $\theta$ is quantified by the $\MSEP$ of
$\thetat$
defined by
$$\MSEP(\thetat)=\E\cro{\|\Sigma^{1/2}(\thetat-\theta)\|^2_{p\times p}}.$$
We refer to the introduction of~\cite{giraud08} for a discussion on the
relevance of the use of the $\MSEP$ of $\thetat$ to assess the quality of the
estimator $\widehat G$. In the sequel, $I$ stands for the identity matrix of
size $p$.

First, we can compare the $\MSEP$ of $\thetat$ to the $\MSEP$ of
$\widehat\theta_{G_{\Sigma}}$ when the minimal graph $G_{\Sigma}$ belongs
to $\widehat \G$ with
large probability. Roughly speaking, the MSEP of $\thetat$ is in this case
smaller (up to a $\log p$ factor) than the MSEP of
$\widehat{\theta}_{G_{\Sigma}}$. This means that $\thetat$ performs almost as
well as if we knew the true graph $G_{\Sigma}$ in advance.

\begin{prte}\label{corollaire_risque}
Assume that $n\geq 9$.
Let $\widehat{\mathcal{G}}$ be any (data-driven) family of graphs with
maximal
degree $D_{\widehat{\mathcal{G}}}= \max \{\degr(G),\ G\in
\widehat{\mathcal{G}}\}$ fulfilling
\begin{eqnarray}\label{condition_degree}
 1\leq D_{\widehat{\mathcal{G}}}\leq \gamma\ \frac{n }{2(1.1+ \sqrt{\log p}
)^2}\ ,\hspace{0.5cm}\text{for some }\gamma<1\,.
\end{eqnarray}
If the minimal graph $G_{\Sigma}$
belongs to the family  $\widehat{\mathcal{G}}$ with large probability
\begin{equation}\label{condition_collection}
 \mathbb{P}\left(G_{\Sigma}\in\widehat{\mathcal{G}}\right)\geq 1 -
\alpha\exp(-\beta n^\delta), \hspace{1cm}\text{for some
}\alpha,\beta,\delta>0\
\end{equation}
then, the $\MSEP$ of the estimator $\widetilde{\theta}$  is upper bounded by
\begin{equation}\label{borne_oracle2}
\MSEP(\widetilde{\theta}) \leq  L_{K,\gamma}\log(p)\pa{
\MSEP(\widehat{\theta}_{G_{\Sigma}})\vee {\MSEP(I)\over n}}
+R_n\,.
\end{equation}
where $L_{K,\gamma}$ is a positive constant depending on $K$ and $\gamma$
only
and the residual term $R_{n}=R_n(\Sigma,\gamma,\alpha,\beta,\delta)$  is of
order $n^3\text{tr}(\Sigma) [e^{-n(\sqrt{\gamma}-\gamma)^2/4}+
\sqrt{\alpha}e^{-\frac{\beta}{2}n^\delta}]$.
\end{prte}

Observe that the residual term $R_n$ goes to 0 exponentially fast with
respect to $n$.
If we forget the term $n^{-1}\MSEP(I)$, then the risk bound
(\ref{borne_oracle2}) essentially states that
the estimator $\thetat$ performs almost as well as if we knew the graph
$G_{\Sigma}$ in advance.

Let us now compare the additional term $n^{-1}\MSEP(I)$ appearing in
(\ref{borne_oracle2}) with the risk
$\MSEP(\widehat{\theta}_{G_{\Sigma}})$. The additional
term $n^{-1}\MSEP(I)$  is equal to $n^{-1}\sum_a{\sigma_a^2}$, where
$\sigma_{a}^2$ stands for
the conditional variance of $X_{a}$ given the remaining variables.
Hence, this quantity is usually smaller than
the risk $\MSEP(\widehat{\theta}_{G_{\Sigma}})$ which is  a
variance term of order $n^{-1}\sum_a d_{a}(G_{\Sigma}){\sigma_a^2}$.
Nevertheless, when
the true graph $G_{\Sigma}$ is empty and  the collection $\widehat{\G}$
contains the empty
graph,  the additional term $n^{-1}\MSEP(I)$ is dominant and the estimator
$\thetat$ is not optimal. Such a drawback is actually unavoidable in model
selection when the target is too close to zero (see Sect.2.3.3 of \cite{BM01} for a discussion). Assumption~(\ref{condition_degree}) is discussed after Theorem
\ref{proposition_risque}.

In Proposition \ref{corollaire_risque}, we state that $\thetat$ performs
almost as well as $\widehat{\theta}_{G_{\Sigma}}$. Nevertheless, the risk
of the estimator $\widehat{\theta}_{G_{\Sigma}}$ can be quite large,
especially when the graph $G_{\Sigma}$ contains a lot of edges. For an
arbitrary graph $G$, the risk $\MSEP(\widehat{\theta}_{G})$ is the sum of
the bias and the variance terms.
If we consider a sparser graph $G$, the estimator $\widehat{\theta}_{G}$
is biased but its variance is smaller, so its risk
$\MSEP(\widehat{\theta}_{G})$ can be smaller.  The estimator
$\widehat{\theta}_{G^*}$ which minimizes the
$\MSEP$ over the collection of estimators
$(\widehat{\theta}_G)_{G\in\widehat{\mathcal{G}}}$ is called
the {\it oracle}.
Observe that the graph $G^*$ is unknown since it is related to the unknown
matrix $\theta$. One goal of model selection is to select an estimator
$\thetat$ which performs almost as well as the oracle estimator. Such a
result is stronger than Proposition \ref{corollaire_risque}. We state it
in the next theorem by providing a so-called {\it oracle inequality} (Eq.
(\ref{borne_oracle})).

\begin{thrm}\label{proposition_risque}
Assume that $n\geq 9$.
Let $\widehat{\mathcal{G}}$ be any (data-driven) family of graphs with
maximal
degree $D_{\widehat{\mathcal{G}}}= \max \{\degr(G),\ G\in
\widehat{\mathcal{G}}\}$ fulfilling~(\ref{condition_degree}).
Then, the $\MSEP$ of the estimator $\widetilde{\theta}$ is upper bounded by
\begin{equation}\label{borne_oracle}
\MSEP(\widetilde{\theta}) \leq  L_{K,\gamma}\log(p)\pa{
\E\cro{\inf_{G\in\widehat \G}\pa{\MSEP(\widehat{\theta}_G)}}\vee
{\MSEP(I)\over
n}}
+R_n\,.
\end{equation}
where $L_{K,\gamma}$ is a positive constant depending on $K$ and $\gamma$
only
and  the residual term $R_{n}=R_n(\Sigma,\gamma)$ (made explicit in the
proof)
is of order $n^3\text{tr}(\Sigma) e^{-n(\sqrt{\gamma}-\gamma)^2/4}$.
\end{thrm} 

If we forget the term $n^{-1}\MSEP(I)$ in (\ref{borne_oracle}),
Theorem~\ref{proposition_risque} states that under
Condition~(\ref{condition_degree}) the MSEP of $\tilde \theta$ nearly achieves,
up to a $\log (p)$ factor, the average minimal MSEP of the family of estimators
$\{\widehat\theta_{G},\ G\in\widehat\G\}$. Hence, $\widetilde{\theta}$ performs
almost as well as the oracle up to a $\log p$ factor. This logarithmic factor is
proved to be unavoidable from  a minimax point of view 
(see \cite{Verzelen08} Sect. 4.2).

Let us compare the risk bound (\ref{borne_oracle}) with Theorem~1 of
Giraud~\cite{giraud08}. This theorem claims that the procedure
nearly selects the best graph among a {\em fixed} collection of graphs.
 In contrast, our
collection of graphs $\widehat{\G}$ is not fixed a priori and depends on the
data ${\bf X}$. Here, we prove that the graph $\widehat{G}$ is nearly
the best (in terms of MSEP) among the random collection $\widehat{G}$. As a
simple example, let us consider the procedure GGMselect with the Lasso-And
family $\widehat{\G}_{LA}$. Theorem \ref{proposition_risque} tells us that the
selected graph $\widehat{G}$ nearly achieves the smallest MSEP among the
collection of Lasso-And graph estimators
$\{\widehat{G}_{\text{and}}^{\lambda}\}_{\lambda>0}$. In other words, GGMselect
nearly selects the best tuning parameter of the Lasso-And procedure.

The condition (\ref{condition_degree}) roughly states that we restrict
ourselves to graphs whose maximal degree is smaller than
$n/(2\log(p))$. For the related problem of random design regression, it is
proved in~\cite{Vminimax}
that theoretical limitations are occurring when the size of the support of the
parameter is larger than $n/(2\log(p))$. In this so-called ultra-high
dimensional setting, it is not possible to obtain an oracle bound of the form
(\ref{borne_oracle}) and it is shown that recovering the
support of the
parameter is almost impossible. In short, estimating a graph whose maximal
degree is larger than
$n/(2\log(p))$ is nearly impossible.

\subsection{Consistency of the selection procedure}
The next theorem states,  under mild assumptions, a consistency result 
for our selection procedure in  a high-dimensional setting.  In the spirit of
the results of Meinshausen and B\"uhlmann~\cite{MB06}, we consider the case
where  the number of variables $p$ increase with the sample size $n$.\\

\noindent
We make  the following assumptions:
\begin{eqnarray*}
\text{{\bf (H.1)}}&  & p_n\geq n\ . \\
\text{{\bf (H.2)}}& &\degr(G_{\Sigma_n})\leq \frac{n^s}{\log p_n}\wedge
\frac{n}{\log^2 p_n} \text{ for some }s<1\ .\\
\text{{\bf (H.3)}}& &\min_{a\neq b,\
b\in\nei_{G_{\Sigma_n}}(a)}\theta_{a,b}^2\min_{a\neq
b}\frac{\var(X_a|X_{-a})}{\var(X_b|X_{-b})}\geq n^{s'-1}\text{ for some }s'>s\ .
\end{eqnarray*}

\begin{thrm}\label{proposition_consistance}
Assume that the family $\widehat{\mathcal{G}}$ of candidate graphs  contains the
true graph with probability going to 1 and {\bf (H.1)}, {\bf (H.2)}, {\bf (H.3)}
are fulfilled. Then, the  estimation procedure GGMselect with $K  > \left[3\vee
{2.5\over (1-s)}\right]$ and 
\begin{eqnarray*}
D_{\widehat{\mathcal{G}}}= \max \{\degr(G),\ G\in \widehat{\mathcal{G}}\}&\leq&
\frac{n}{\log^2 p_n}
\end{eqnarray*}
is consistent. More precisely, there exist some universal constant $L$ and some
integer $n_0=n_0\left[K, s, s'\right]$ not depending on the true graph
$G_{\Sigma_n}$ nor on the covariance $\Sigma_n$ such that 
\begin{equation*}\mathbb{P} \left[ \widehat{G} = G_{\Sigma_n}\right]\geq 1-
Lp_n^{-1/2}-
\mathbb{P}\left[G_{\Sigma_n}\notin\widehat{\mathcal{G}}\right],\quad \textrm{
for any }n\geq n_{0}\,.
\end{equation*} 
\end{thrm}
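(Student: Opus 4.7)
The plan is to work on the event $\Omega_0=\{G_{\Sigma_n}\in\widehat{\mathcal{G}}\}$, losing at most $\P(G_{\Sigma_n}\notin\widehat{\mathcal{G}})$ in the bound, and to establish on $\Omega_0$ that $\crit(G_*)<\crit(G)$ for every $G\in\mathcal{G}_D\setminus\{G_*\}$, with $G_*:=G_{\Sigma_n}$ and $\mathcal{G}_D$ the class of all graphs on $\Gamma$ of degree at most $D:=D_{\widehat{\mathcal{G}}}\leq n/\log^2 p_n$. The key observation is that the least-squares problem~(\ref{thetahat}) decouples across columns, giving $\X[\widehat\theta_G]_a=\Pi_{G,a}\X_a$, where $\Pi_{G,a}$ denotes the orthogonal projector onto $V_{\nei_G(a)}$. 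Consequently $\crit(G)$ splits additively over nodes and the analysis can proceed node by node. Conditionally on $\X_{-a}$, the residual $\eps_a=\X_a-\X\theta_a$ is $\mathcal{N}(0,\sigma_a^2 I_n)$ with $\sigma_a^2=\var(X_a|X_{-a})$ and is independent of both $\Pi_{G,a}$ and of $\X\theta_a$ (which lies in $V_{\nei_{G_*}(a)}$), which opens the way to standard Gaussian tools.

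For each competitor $G\neq G_*$ I would partition the nodes into \emph{overfitted} ones, for which $\nei_G(a)\supsetneq\nei_{G_*}(a)$, and \emph{underfitted} ones, for which $\nei_G(a)\not\supseteq\nei_{G_*}(a)$. On an overfitted node, $\X\theta_a\in V_{\nei_{G_*}(a)}\subset V_{\nei_G(a)}$, so
$$\mathrm{RSS}_a(G_*)-\mathrm{RSS}_a(G)\;=\;\frac{\sigma_a^2}{n}\,\chi^2_{d_a(G)-d_a(G_*)}\,,$$
and the task is to show that the associated increase in the penalized term dominates this chi-square. This is exactly the property the $\EDKhi$-calibrated penalty of Baraud \emph{et al.}~\cite{BGH09} is designed for: the combinatorial factor $\binom{p-1}{d}(d+1)^2$ inside $\EDKhi$ delivers a quantile sharp enough to absorb a union bound over all oversized neighborhoods at node $a$, and the inflation $K>3$ supplies the additional slack needed to push the overall failure probability down to $O(p_n^{-1/2})$ after summing the node-level events.

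On an underfitted node, choose $b\in\nei_{G_*}(a)\setminus\nei_G(a)$. A Pythagorean expansion yields
$$n\,\mathrm{RSS}_a(G)\;=\;\bigl\|(I-\Pi_{G,a})\X\theta_a\bigr\|^2+2\bigl\langle (I-\Pi_{G,a})\X\theta_a,\eps_a\bigr\rangle+\bigl\|(I-\Pi_{G,a})\eps_a\bigr\|^2,$$
and I would lower bound the bias $\|(I-\Pi_{G,a})\X\theta_a\|^2$, uniformly in $G$, by a constant times $n\,\theta_{a,b}^2\,\var(X_b|X_{-b})$, by extracting the component of $\theta_{a,b}\X_b$ orthogonal to $V_{\nei_G(a)}$ and invoking a restricted-eigenvalue bound for the Gaussian design $\X$ valid for all submatrices of size $\leq 2D$. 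Hypothesis~\textbf{(H.3)} then forces this bias to be of order at least $n^{s'}\sigma_a^2$, while the penalty term at node $a$ costs at most $\sigma_a^2\cdot O(n^{s-1}\log p_n)$ under~\textbf{(H.2)}. Since $s'>s$, the bias strictly dominates, and the requirement $K>2.5/(1-s)$ provides precisely the exponent needed in the sub-Gaussian tail bounds to absorb the $\exp(O(p_nD\log p_n))$ entropy of $\mathcal{G}_D$ when summing the node-level events across all underfitted graphs.

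The main obstacle, in my view, is obtaining the uniform lower bound on $\|(I-\Pi_{G,a})\X\theta_a\|^2$ in the underfitted case, since it requires simultaneously controlling the smallest singular value of every submatrix of $\X$ with at most $2D$ columns. The degree cap $D\leq n/\log^2 p_n$ from \textbf{(H.2)} is calibrated precisely for this purpose: it is what allows the restricted-eigenvalue event to have probability $1-O(p_n^{-1/2})$, while still leaving the signal gap $s'-s$ available to overcome the logarithmic factors arising from the union bound. The remaining steps are essentially chi-square concentration and careful bookkeeping of the $\EDKhi$ calibration from~\cite{BGH09}, all held together by the two lower bounds on $K$ in the hypothesis.
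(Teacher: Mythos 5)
Your overall architecture coincides with the paper's: exploit the additive decomposition $\crit(G)=\sum_a\crit(a,\nei_G(a))$ to reduce to node-wise neighborhood selection, then show for each node $a$ that the true neighborhood beats every strictly larger one (via the $\EDKhi$ calibration of the penalty and a Fisher-quantile union bound) and every neighborhood missing a true edge (via \textbf{(H.3)}). The one genuinely different technical choice is your restricted-eigenvalue lower bound on the bias $\|(I-\Pi_{G,a})\X\theta_a\|^2$: the paper instead introduces the population projection $\theta_{\nei(a)}$, uses the identity $\|\Sigma^{1/2}(\theta_a-\theta_{\nei(a)})\|^2=\var(X_a|X_{\nei(a)})-\var(X_a|X_{-a})$, and controls $\|\Pi_{\nei(a)}^{\perp}\X(\theta_a-\theta_{\nei(a)})\|_n^2$ directly as a linear combination of $\chi^2$ variables via Laurent--Massart concentration, with no uniform singular-value event. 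Your route is workable (it is essentially the event $\B$ from the proof of Theorem 3.1), but it is not where the difficulty lies.

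The genuine gap is in your accounting for the underfitted case. You budget ``bias $\gtrsim n^{s'-1}\sigma_a^2$ versus penalty cost $O(n^{s-1}\log p_n)\,\sigma_a^2$'' and assign to $K>2.5/(1-s)$ the job of absorbing the union bound. But an underfitted neighborhood may have cardinality $d$ as large as $D_{\max}=n/\log^2 p_n$, and the union bound over the $\binom{p}{d}$ such neighborhoods forces you to tolerate deviations of the cross term $\langle(I-\Pi_{G,a})\X\theta_a,\eps_a\rangle$ and of $\|\Pi_{\nei(a)}\eps_a\|_n^2$ of order $\sigma_a^2\, d\log(p/d)/n$, which for $d$ near $D_{\max}$ is of order $\sigma_a^2/\log p_n$ and can vastly exceed the signal $n^{s'-1}\sigma_a^2$. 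The signal cannot beat these terms; they must be absorbed by the \emph{competitor's own} penalty, of order $2K\sigma_a^2\, d\log(p/d)/n$ --- this is exactly why the paper splits the cross term with $\kappa=6/7$ and invokes $K\geq 3$ in this case, and only the residual $O(\log p_n/n)\leq O(n^{s-1})$ pieces are beaten by the $n^{s'-1}$ signal. Relatedly, you have the two conditions on $K$ backwards: $K>2.5/(1-s)$ belongs to the \emph{overfitting} case, where a competitor of size comparable to $|\nei_{G_{\Sigma}}(a)|\leq p^s$ yields a penalty increment of only about $2K(1-s)\log p_n$ per added node against a Fisher quantile of about $5\log p_n$ from the union bound, while $K\geq 3$ is the underfitting requirement just described. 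Finally, the entropy to beat is the $\leq p^{D}$ neighborhoods per node, not $\exp(O(p_nD\log p_n))$ graphs; the node-wise reduction is precisely what makes the union bound affordable, and your phrasing suggests summing over graphs, which would not close.
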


Let us discuss the assumptions of the theorem and their  similarity with some of
the hypotheses made in \cite{MB06}. The
Assumption~{\bf (H.2)} is met if $p_{n}$ grows polynomially with respect to $n$ 
and the degree of the true graph does not grow faster than $n^{\kappa}$ with
$\kappa<s$ (which corresponds to Assumptions 1 and 2 in~\cite{MB06}). We
mention
that {\bf (H.2)} is not satisfied when $p_{n}$ grows exponentially with $n$
unless $G_{\Sigma_n}$ is empty. It is actually impossible to consistently
estimate a non-empty graph if $p_n$ is of order $\exp(n)$, see 
\cite{Vminimax}.

The Assumption {\bf (H.3)} ensures that the conditional variances as well as the
non-zero terms $\theta_{a,b}$ are large enough so that the edges can be
detected. 
To compare with~\cite{MB06}, Assumption {\bf (H.3)} is met as soon as
Assumption
2 and 5 in~\cite{MB06} are satisfied. In addition, we underline that we make  
no assumption on the $l^1$-norm of the prediction coefficients or on the signs
of $\theta_{a,b}$ (Assumptions  4 and 6 in \cite{MB06}).

Finally, we do not claim that the condition $K>\left[2.5/(1-s)\vee 3\right]$ is
minimal to obtain  consistency. It seems from simulation experiments that
smaller choices of $K$ also provide good estimations.

\section{Numerical study}\label{section_simulations}
It is essential to investigate the performance of statistical
procedures on data. Since we do not  know  the actual underlying
graph of conditional dependences on real data sets, we mainly opt for a
numerical study with simulated data. Our aims in this study are
 to evaluate the feasibility of the GGMselect procedure and to 
compare its performances  with those of
recent graph-selection procedures. \\


{\bf Simulating the data}.
The matrix $\X$ is composed of $n$ i.i.d. rows with Gaussian
$\mathcal{N}_{p}(0,\Omega^{-1})$ distribution where the inverse covariance
matrix $\Omega$ is constructed according to the following procedure.
We set $\Omega=BB^T+D$, where $B$ is a 
random sparse lower triangular matrix and $D$ is a diagonal matrix
with random entries of order $10^{-3}$. The latter matrix $D$ prevents
$\Omega$ from having too small eigenvalues.  To generate $B$ we split
$\ac{1,\ldots,p}$ into three consecutive sets $I_{1}$, $I_{2}$, $I_{3}$
of approximately equal size, and choose two real numbers
$\eta_\mathrm{int}$ and 
$\eta_\mathrm{ext}$ between 0 and 1. For any $a, b$ such that $1\leq
a<b \leq p$,  we set $B_{a,b}=0$ with probability
$1-\eta_\mathrm{int}$ if $a$ and $b$ are in the same set, and  we set
$B_{a,b}=0$  with probability $1-\eta_\mathrm{ext}$ if $a$ and $b$
belong to two different sets. Then, the lower diagonal values that
have not been set to 0 are drawn according to a uniform law on
$\cro{-1,1}$ and the diagonal values are drawn according to a uniform
law on $\cro{0,\varepsilon}$. Finally, we rescale $\Omega$ in order to
have 1 on the diagonal of $\Sigma=\Omega^{-1}$.  This matrix $\Sigma$ defines
a graph $G = G_{\Sigma}$ and a matrix
$\theta$ defined as in Section~\ref{SelProc.st}. The sparsity of the
graph is measured via a sparsity index noted $I_{s}$, defined as the
average number of edges per nodes in the graph.

In our simulation study we set
$\eta=\eta_\mathrm{int}=5\eta_\mathrm{ext}$, and $\varepsilon=0.1$. 
We evaluate the value of $\eta$ corresponding to a desired value of the sparsity
index $I_{s}$ by simulation.
$I_{s}$ equals the desired value.
Choosing $I_{s}$ small, we get sparse graphs whose edges distribution
is not uniform, see Figure~\ref{dessGr.fg}.\\

\begin{figure}[htbp]
\begin{center}
{\includegraphics[height=12.5cm,width=6cm,angle=270]{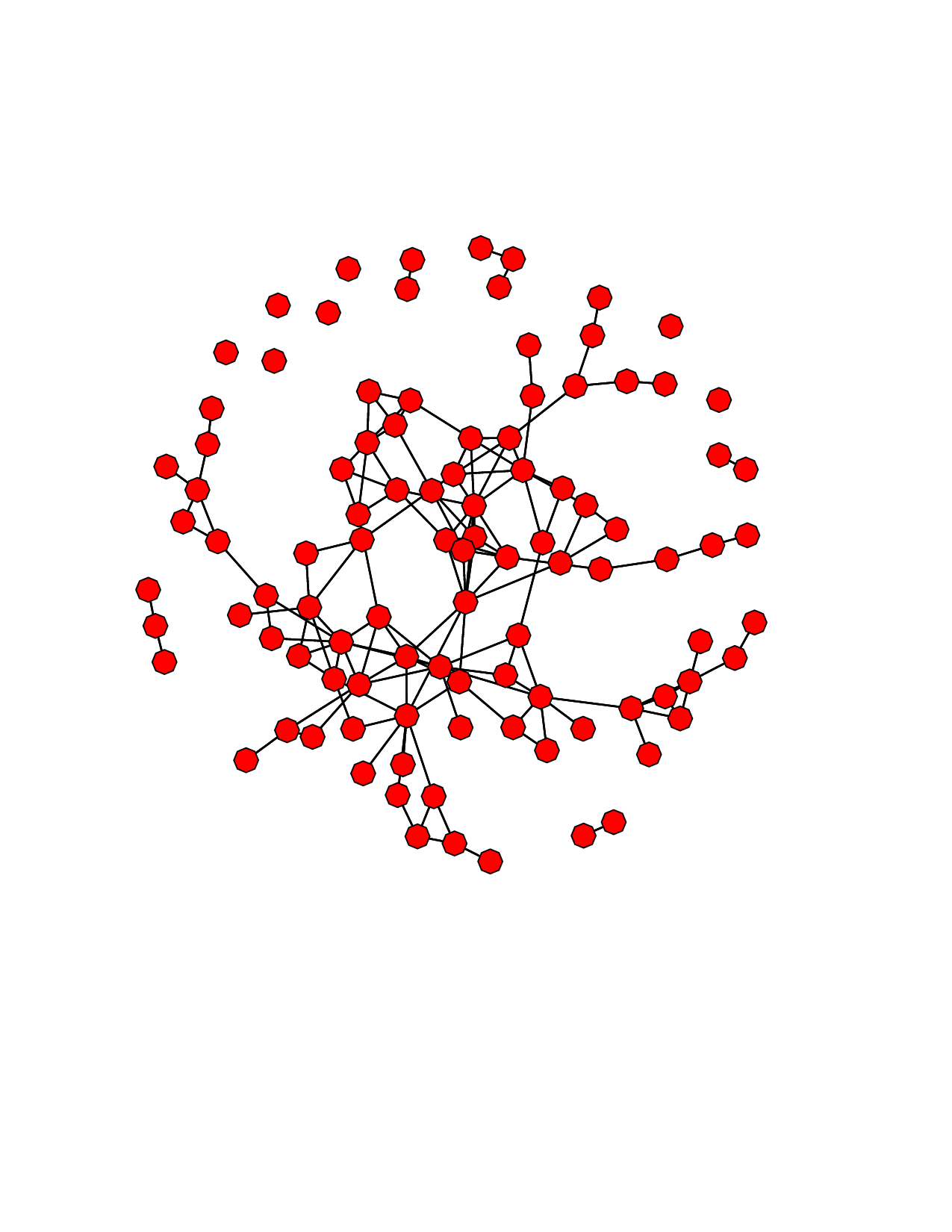}}
\end{center}
\caption{\label{dessGr.fg} One simulated graph $G$ with $p=30$ and
  $I_{s}=3$. The degree $\deg(G)$ of the graph equals 8.}
\end{figure}

{\bf GGMselect: choice of graphs families}.
Our procedure is applied for the families of graphs presented in
  Section~\ref{FamGr.st}. The methods  are respectively denoted
  {\tt C01}, {\tt LA}, {\tt EW} and {\tt QE}. 


The family $\widehat{\mathcal{G}}_{\EW}$ is based on the calculation
of exponential weight  estimators $\widehat{\theta}^{\EW}$. This
calculation depends on parameters, denoted
$\alpha, \beta, \sigma, \tau$~in \cite{DT09}, that defined the
aggregation procedure, and on parameters, denoted $h$ and
$T$~in \cite{DT09}, used in the
Langevin Monte-Carlo algorithm (see Section \ref{section_family_details} for details). We chose these parameters as
follows. The matrix 
$\X$ being scaled such that the norm of each column equals 1, we took
$\sigma=1/\sqrt{n}$,  and we 
set $\alpha=0$, $\beta=2/n$, $\tau=1/\sqrt{n(p-1)}$ and $h=10^{-3}$,
$T=200$. Using these parameters values we did not encountered
convergence problems in our simulation study.

As it was already mentioned in Section~\ref{FamGr.st}, the size of
the family $\widehat{\mathcal{G}}_{\QE}$ may be very large leading to  memory
size problems in the computational process. In that case, as soon as a
memory size problem is encountered, the
research between $\widehat{G}_{K,\text{and}}$ and
$\widehat{G}_{K,\text{or}}$ is stopped and  prolonged by  a stepwise
procedure.  

Our procedure depends on two parameters: $K$ occurring in the penalty
function (see Equation~\ref{definition_penalite}) and $D$ the maximum
degree of the graph. We choose $K=2.5$ in all simulation
experiments. In practice, we want to choose $D$  as large as
possible. From theoretical results in Section~\ref{section_theorique}
and in~\cite{Vminimax}, 
we know that we can take $D$ as large as $\lfloor
n/(2\log(p))\rfloor$, and that it is nearly 
impossible to perform
estimation of a graph when the maximal degree is larger than
$n/(2\log(p))$. We then set $D=\lfloor
n/(2\log(p))\rfloor$ except for ${\QE}$ whose algorithmic complexity
increases exponentially with $D$. 

All these methods  are implemented in R-2.7.2 in  the package {\tt GGMselect}. 

\subsection{CPU times}\label{section_comparaison_temporel}

We assess the practical feasibility of the methods we propose
from the point of view of the memory size and computer time. 
To this aim,
we simulate graphs with $p=30, 100, 200, 300, 500$ nodes, sparsity $I_{s}=3$
and $n=50$. 
The simulation were run on a Bi-Pro  Xeon quad core 2.66 GHz with  24 Go RAM. 
The computer time  being strongly dependent on the simulated graph we
calculate the mean of computer times over $N_{G}=100$ simulated
graphs. For each of these graphs, one matrix $\X$ is simulated. The
results are given in Table~\ref{CompT.tb}. The maximum degree $D$ of the
estimated graph was set to $\lfloor n/2\log(p)\rfloor$, except for the {\tt QE}
method  where
$D=3$ and 5. The maximum allowed memory size is
exceeded for the {\tt QE} method when $D=5$ and $p \geq 100$, and when
$D=3$ for $p\geq 300$. The {\tt LA} and {\tt C01} methods are running very
fast. The computing time for the {\tt EW} method increases quickly
with $p$: in
this simulation study, it is roughly proportional to
$\exp\left(\sqrt{p}/2\right)$,  see Figure~\ref{dessCT.fg}. This order of
magnitude is obviously
dependent on the choice of the parameters occurring in the Langevin
Monte-Carlo algorithm for calculating  $\widehat\theta^{EW}$. 

\begin{table}[htbp]
\begin{center}

\begin{tabular}{l|c|c|c|rr|rr|}
 & 
   \multicolumn{3}{c|}{$D=\lfloor n/(2\log(p))\rfloor$}
& \multicolumn{2}{c|}{$D=3$} & 
   \multicolumn{2}{c|}{$D=5$} \\ \hline
$p$  & \hspace{0.4 cm}
  {\tt EW} \hspace{0.4 cm} & 
 \hspace{0.4 cm}   {\tt LA} \hspace{0.4 cm}  & \hspace{0.4 cm} {\tt C01} \hspace{0.4 cm} 
& \multicolumn{2}{c|}{{\tt QE}} & 
   \multicolumn{2}{c|}{{\tt QE}}\\ \hline
30  & 7.1  & 0.46 & 0.04 & 16 &  {\small $[1.9, 1366]$} & 146 &  {\small $[125, 975]$}\\
100  & 111 & 3.11 & 0.13 
& $1956$ &  {\small $[240, 5628]$} & \multicolumn{2}{c|}{$>$ams}\\
200  & 853 
 & 8.0 \,\   & 0.68 & $4240 $ & {\small $[4008, 5178]$}& \multicolumn{2}{c|}{$>$ams}\\
300 & 
$4277$  & 15.5\,\  \  & 2.27  & \multicolumn{2}{c|}{$>$ams} & \multicolumn{2}{c|}{$>$ams}\\
500  & 158550 
 &   43\,\  \  & 9.7 & \multicolumn{2}{c|}{$>$ams} & \multicolumn{2}{c|}{$>$ams}\\
\end{tabular} 

\end{center}
\caption{\label{CompT.tb} Means and ranges (in square brackets) of
  computing times in seconds 
  calculated over $N_{G}=100$   simulated
  graphs. For {\tt EW}, {\tt LA} and {\tt C01} there is nearly no  variability
in the computing
  times. {\rm $>$ams} means that the maximum allowed memory size was exceeded.} 
\end{table}

\begin{figure}[htbp]
\begin{center}
\includegraphics[height=8cm,width=6cm,angle=270]{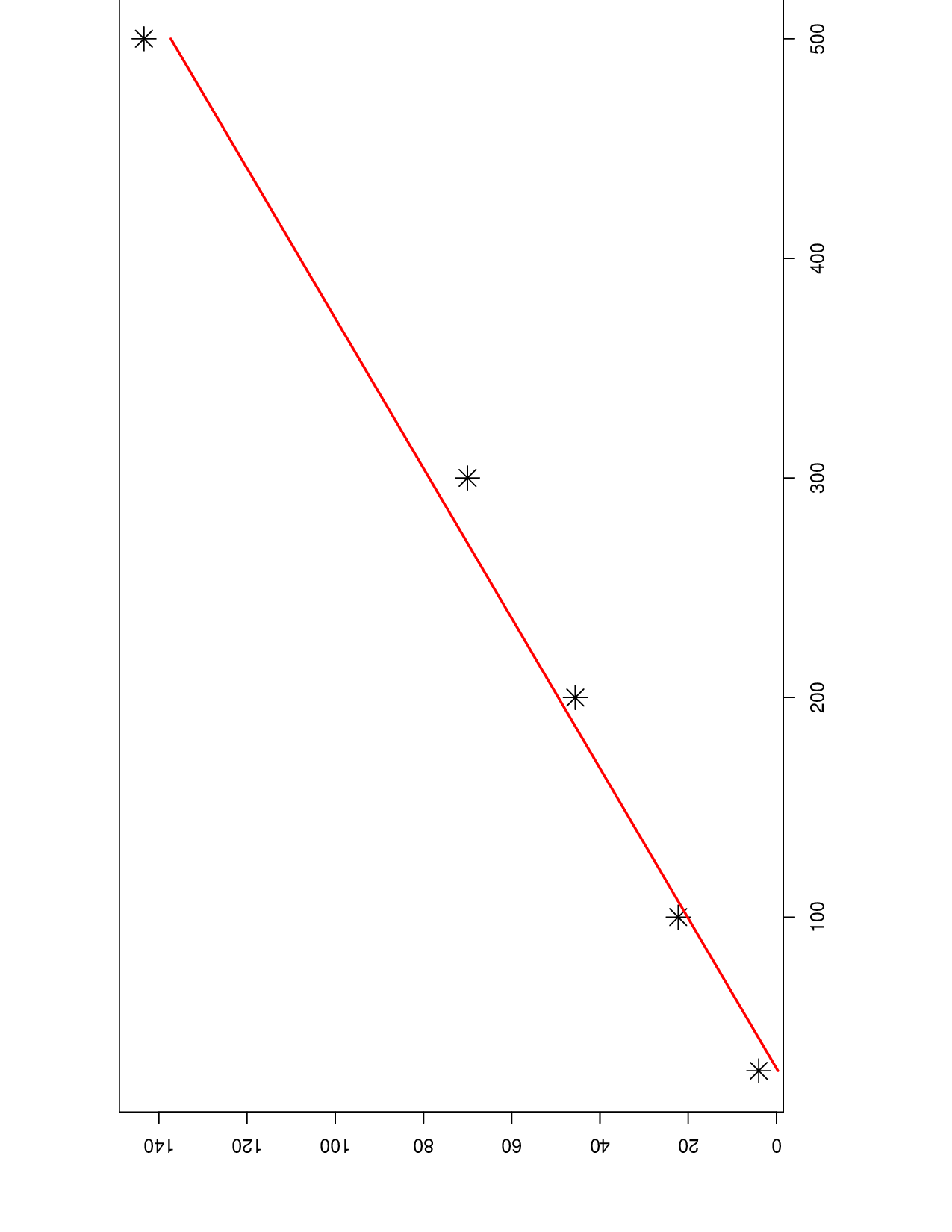}
\end{center}
\caption{\label{dessCT.fg}Graphic of $\log^{2}(\mbox{CPU time})$
  versus $p$ for the {\tt EW} method.}
\end{figure}

\subsection{Methods comparison}

We compare our methods with the following ones: 
\begin{itemize}
\item the 0-1 conditional independence approach proposed in \cite{WB06},
with the decision rule based on the adjusted
  p-values following the Benjamini-Hochberg procedure taking
  $\alpha=5\%$. 
\item the lasso approach, with the two variants {\tt and} and {\tt or}
  proposed in \cite{MB06}, taking   $\alpha=5\%$. 
\item the adaptive glasso method proposed  in \cite{Fan08}. It works
in two steps. First, the matrix
    $\Omega$ is estimated using 
    the glasso method.  Then the glasso procedure is run again using
    weights in the penalty that depend on the previous estimate of
    $\Omega$, see Equation~(2.5) in \cite{Fan08}. 
At each step the
    regularization  parameter is calculated by  K-fold
    cross-validation.
\end{itemize}
These methods will be denoted as {\tt WB}, {\tt MB.and}, {\tt MB.or} and
{\tt Aglasso}. They  were implemented in  R-2.7.2 using the packages
{\tt lars} for the {\tt MB} methods and the package {\tt glasso} for the last
one. \\

{\bf Assessing the performances of the methods}.
We  assess the performances of the investigated methods on the 
basis of $N_{G} \times N_{X}$ runs where $N_{G}$ is the number of
simulated graphs and  $N_{X}$ the number of matrices $\X$ simulated
for each of these graphs. We compare
each simulated graph  $G$ with the estimated graphs $\widehat{G}$ by
counting edges that are correctly identified as present or absent, and
those that are wrongly identified. We thus estimate  the false
discovery rate (or FDR)  defined as the expected 
proportion of wrongly detected  edges  among edges detected as
present, and  the
power defined as the expected proportion of rightly detected  edges
among edges  present in the graph.

The statistical procedures
designed to select graphs  have one or several parameters that must be
tuned. The quality of the final estimation is then affected as well by
the intrinsic ability of the procedure to select an accurate graph, as
by the parameter tuning.  
First, we  investigate the first issue by  varying the
values of the tuning parameters and plotting {\it power versus FDR
  curves}. We choose  $p=100$, $n=50$ and $I_{s}=3$. Then, taking the point
of view of a typical user, we compare the different procedures with
the tuning parameter recommended in the literature. 
We investigate the effect of $n$ by choosing $n=30, 50, 100, 150$,
keeping $p=100$. We also evaluate the  effect of graph sparsity taking
$I_{s} = 1, 2, 3, 4, 5$, $p=30$ to keep the computer time under
reasonable values, and $n=30$.  Finally, we compare our
 criterion defined by Equations~\eref{definition_critere}
 and~\eref{definition_penalite}  to a BIC-type criterion which selects
 a graph by minimizing with respect to $G \in \widehat \G$,
\begin{equation*}
 \crit_{\mbox{BIC}}(G)= \sum_{a=1}^p
\exp\left( \log \left\{\|{\bf X}_a-{\bf X}[\widehat{\theta}_{G}]_{a}
\|_n^2 \right\}+ d_{a}\frac{\log(p)}{n}\right)\ .
\end{equation*}
We base this last simulation study on empty
graphs with  $p=1000$ and $n=100$, in order to evaluate  in practice,
the tendancy of BIC to overfit in a high
dimensional setting.

\subsubsection{{\it Power versus FDR
  curves} when $p=100$}

The number of nodes $p$ and the number of observations $n$ being fixed
to $p=100$, $n=50$, for each of the $N_{G}=20$ simulated graphs, we
estimated the FDR, the power and the MSEP on the basis of
$N_{X}=20$ simulations. These calculations are done for different  values of
the tuning parameter. 
The means over the $N_{G}$ graphs are shown
at Figure~\ref{dess7.fg}. The standard errors of the
means over the $N_{G}$ graphs are smaller than 0.0057 for the FDR,
and 0.018 for the power. \\

\begin{figure}[htbp]
\begin{center}
{\includegraphics[height=12.5cm,width=8cm,angle=270]{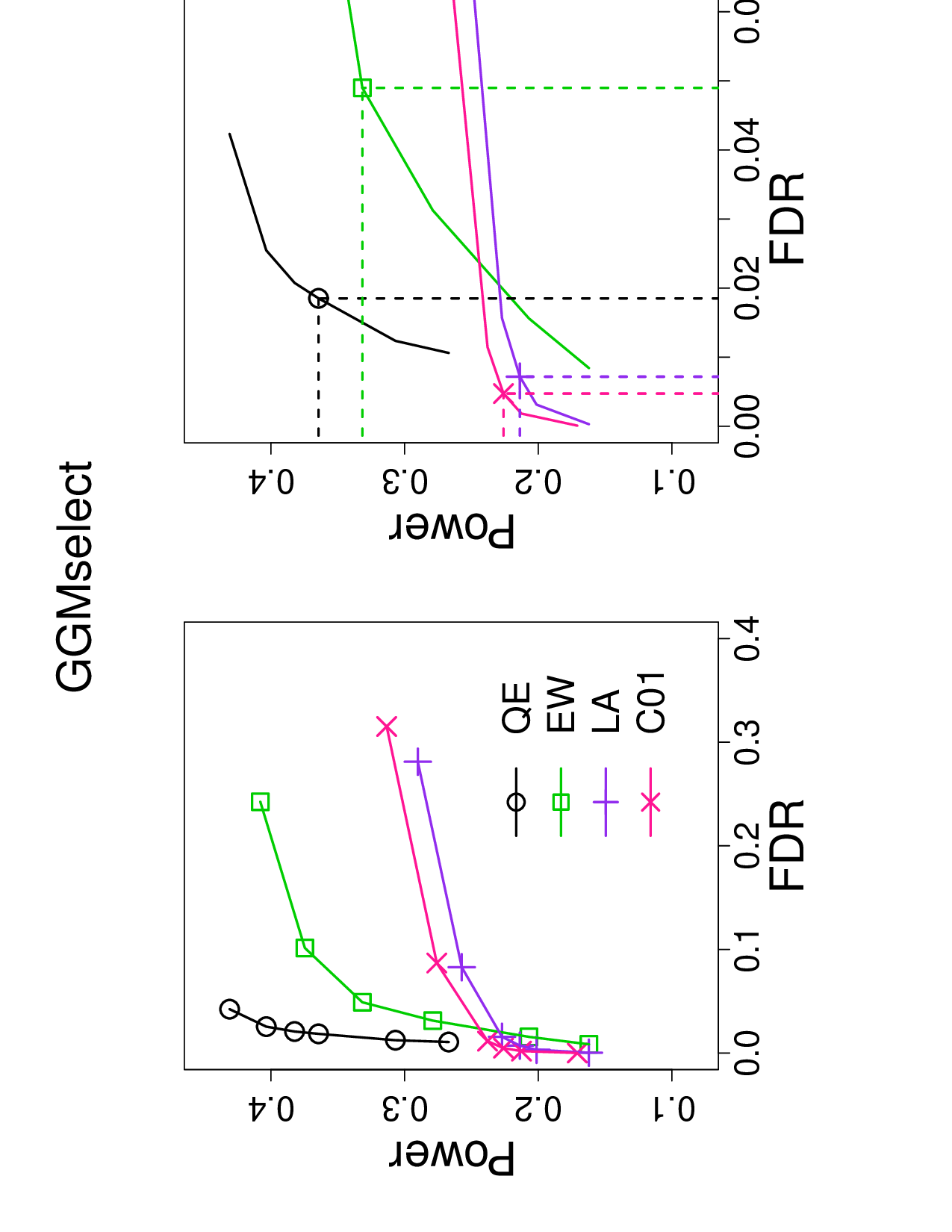}

\includegraphics[height=12.5cm,width=8cm,angle=270]{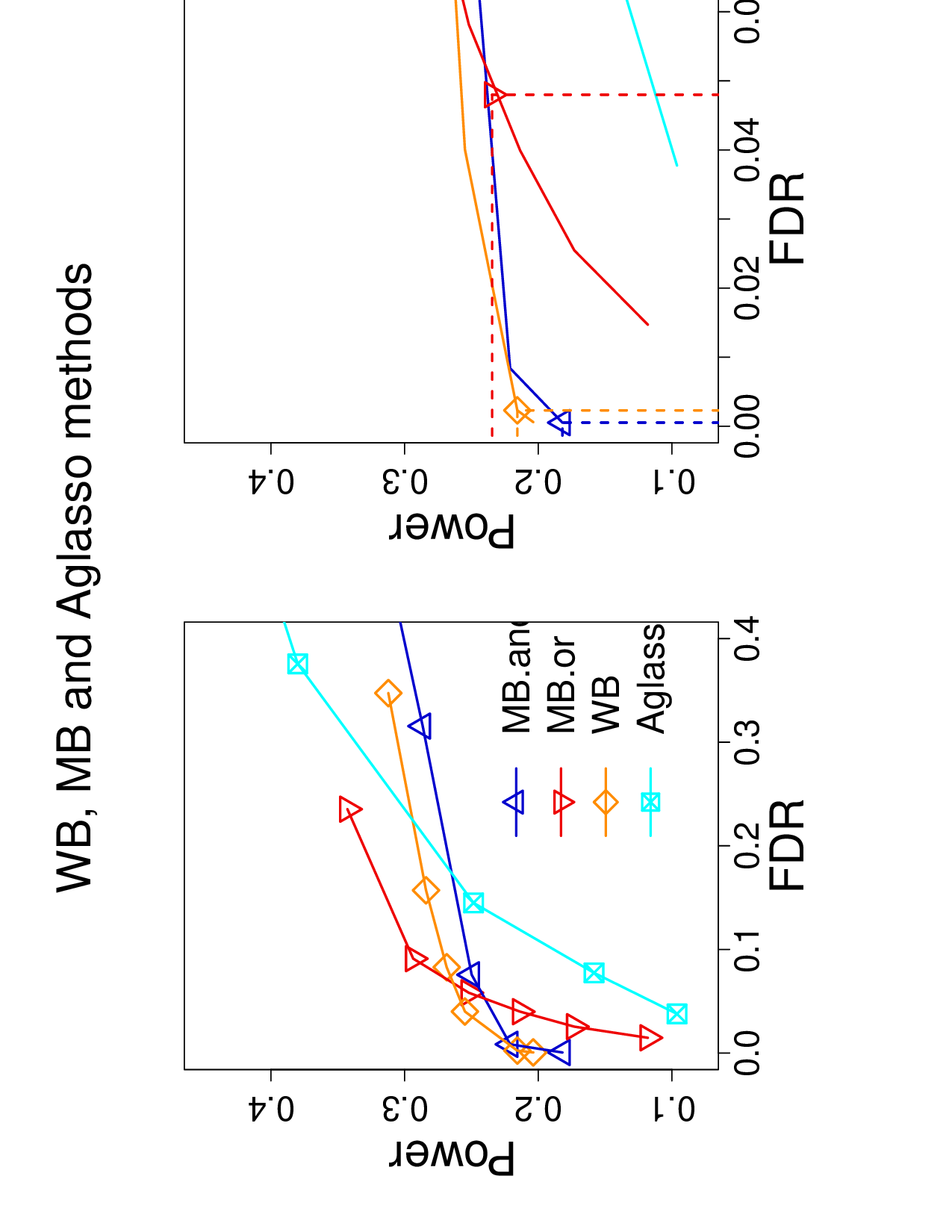}
}
\end{center}
\caption{\label{dess7.fg}Graphics of power versus FDR for the case
  $p=100$, $n=50$ and $I_{s}=3$. The marks on the graphics on the left
  correspond to different values of the tuning parameter. The curves
  for small FDR values are magnified on the graphics on the right. The FDR
  and power values corresponding to the tuning parameter recommended
  in the literature are superimposed on the curves (dashed lines) : $K=2.5$ for
{\tt  GGMselect}, $\alpha=5\%$ for {\tt WB} and {\tt MB} methods. For {\tt
  Aglasso},  with  $\lambda$ chosen by 5-fold cross-validation,  the FDR equals
  0.90 and the power equals 0.59 (not shown).}
\end{figure}


{\bf Choice of the family of candidate graphs in our procedure}.
The {\tt QE} method presents good  performances: the
FDR stays small and the power is high. Though it  was performed with $D=3$,
while  {\tt EW},  {\tt LA} and {\tt C01} were performed with $D=5$,
it works the best. The {\tt EW}
method is more powerful than {\tt LA} and {\tt C01} if one accepts a
FDR greater than 2.5\%. \\

{\bf Comparison with the other methods}.
The procedures  {\tt LA} and  {\tt C01} behave similarly to {\tt WB}
method. The {\tt MB.or} method presents  higher values of the power
when the FDR is larger than 5\%. The {\tt MB.and} keeps down  the
FDR but lacks power. The {\tt Aglasso} method behaves completely in a
different way: the curve stays under the others as long as the FDR is
smaller than 20\%. When the regularization parameter is chosen by
5-fold cross-validation, the power equals  $59\%$ at the price of a very
large FDR equal to 90\% (not shown). In the following we do not consider anymore
the adaptive glasso method, and focus on methods that have a good
control of the FDR. \\

{\bf Results when $p$ is very large face to $n$}.
Keeping $n=50$,
and taking $p=500$, we 
estimated the FDR and the power for all methods except the {\tt EW}
method for which the computing time is too large for carrying
out a simulation study. The results are given at
Figure~\ref{dess83.fg}. The method {\tt QE} was performed with $D=2$,
while the {\tt LA} and {\tt C01} were performed with $D=5$. As
expected, for all methods,  the power is lower for $p=500$
than for $p=100$. The  between procedures comparison stay the same.

\begin{figure}[htbp]
\begin{center}
{\includegraphics[height=12.5cm,width=8cm,angle=270]{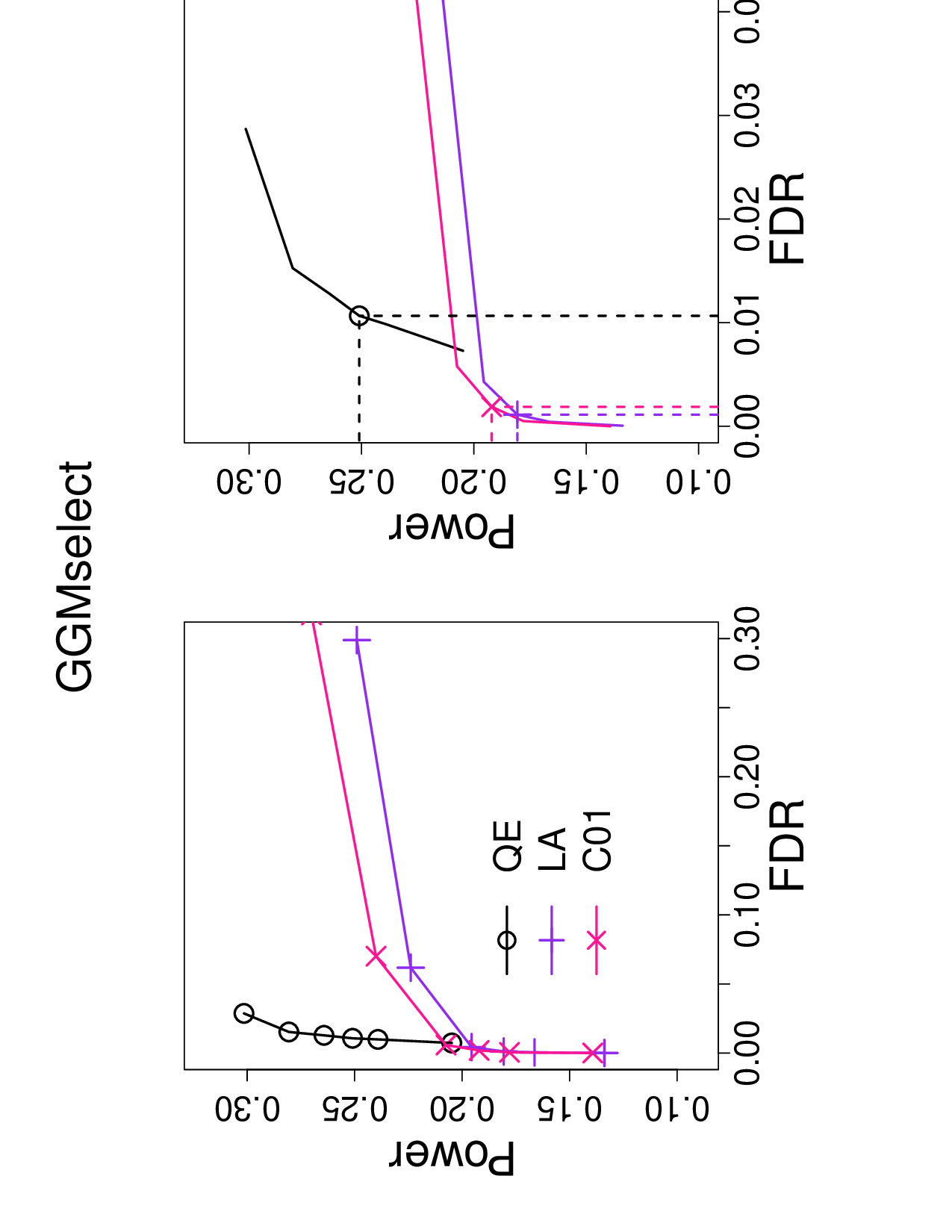}

\includegraphics[height=12.5cm,width=8cm,angle=270]{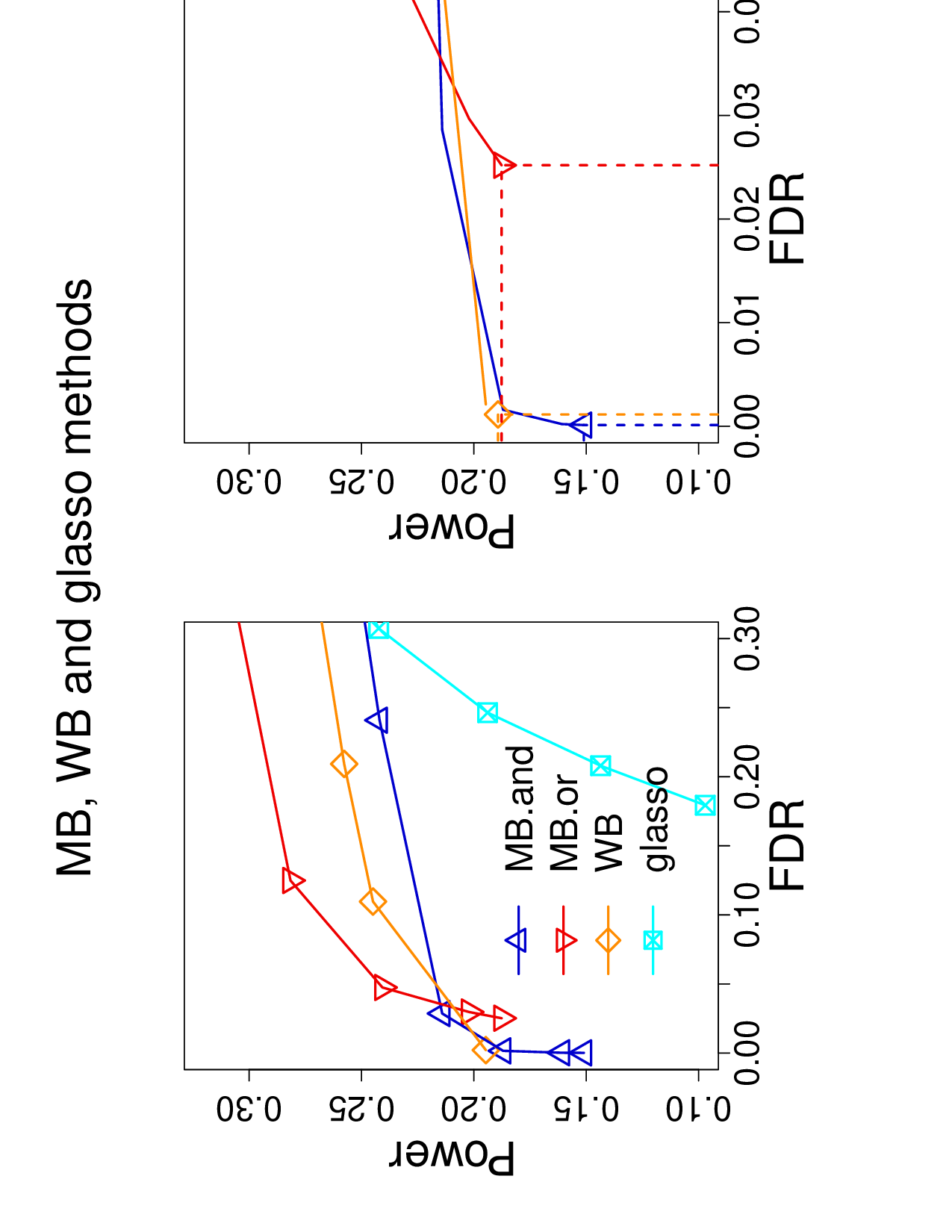}
}
\end{center}
\caption{\label{dess83.fg}Graphics of power versus FDR for the case
  $p=500$, $n=50$ and $I_{s}=3$. The marks on the graphics on the left
  correspond to different values of the tuning parameter. The curves
  for small FDR values are magnified on the graphics on the right. The FDR
  and power values corresponding to the tuning parameter recommended
  in the literature are superimposed on the curves (dashed lines) : $K=2.5$ for
{\tt  GGMselect}, $\alpha=5\%$ for {\tt WB} and {\tt MB} methods.}
\end{figure}

\subsubsection{Effect of the number of observations $n$}

Keeping $p=100$ and $I_{s}=3$, the variations of the FDR and power
values versus the number of observations, are shown in
Figure~\ref{dess4.fg}. The {\tt QE} 
method is applied with $D=3$ while {\tt EW}, {\tt LA} and {\tt C01} are
applied with $D=5$. For all methods the power
increases with $n$ while the FDR decreases  for {\tt EW} and increases
for {\tt MB.or}, {\tt LA} and {\tt C01}. {\tt QE} and {\tt EW} are the
most powerful. When $n$ is small, the {\tt QE}
method stays more powerful than {\tt EW} in spite of a smaller  $D$.


\begin{figure}[htbp]
\begin{center}
{\includegraphics[height=12.5cm,width=8cm,angle=270]{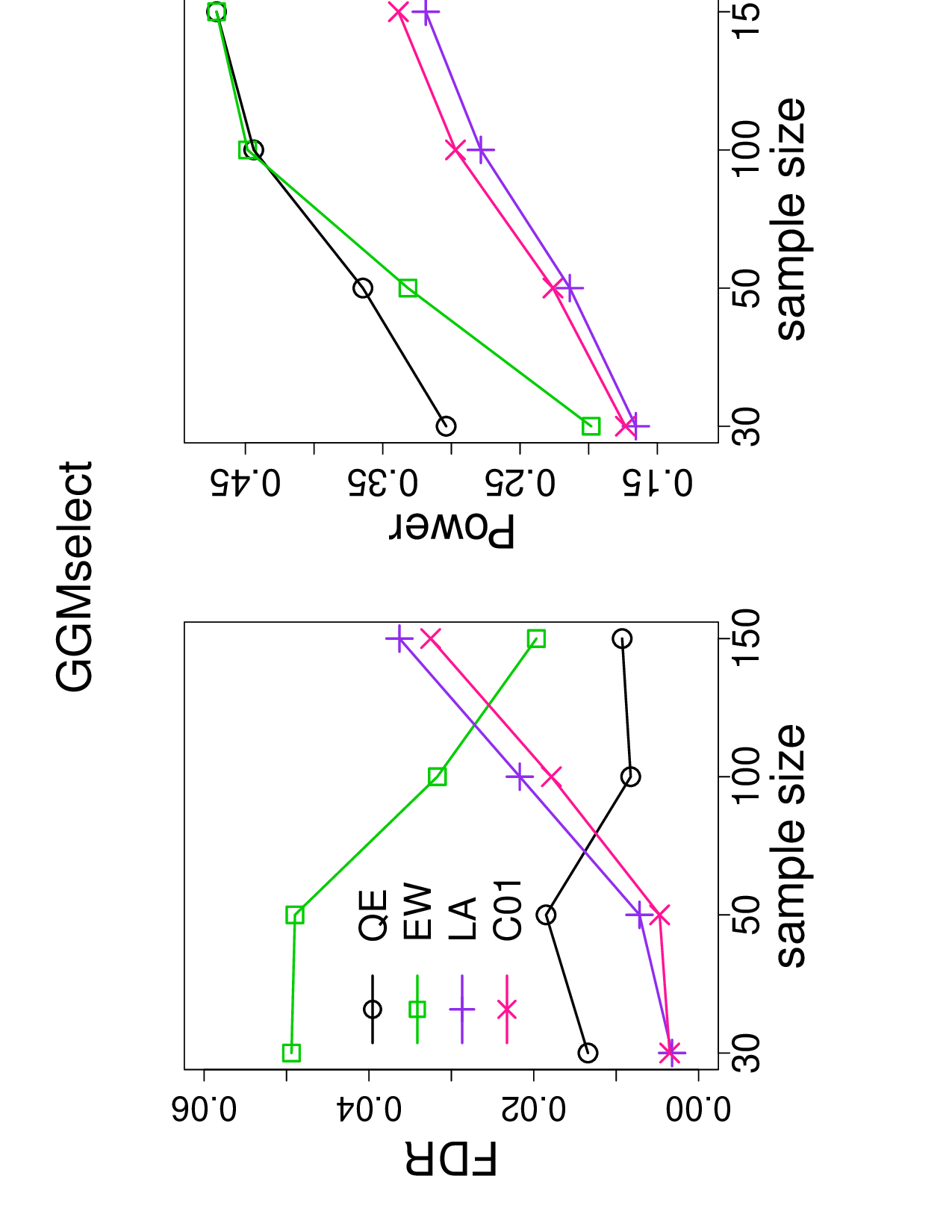}

\includegraphics[height=12.5cm,width=8cm,angle=270]{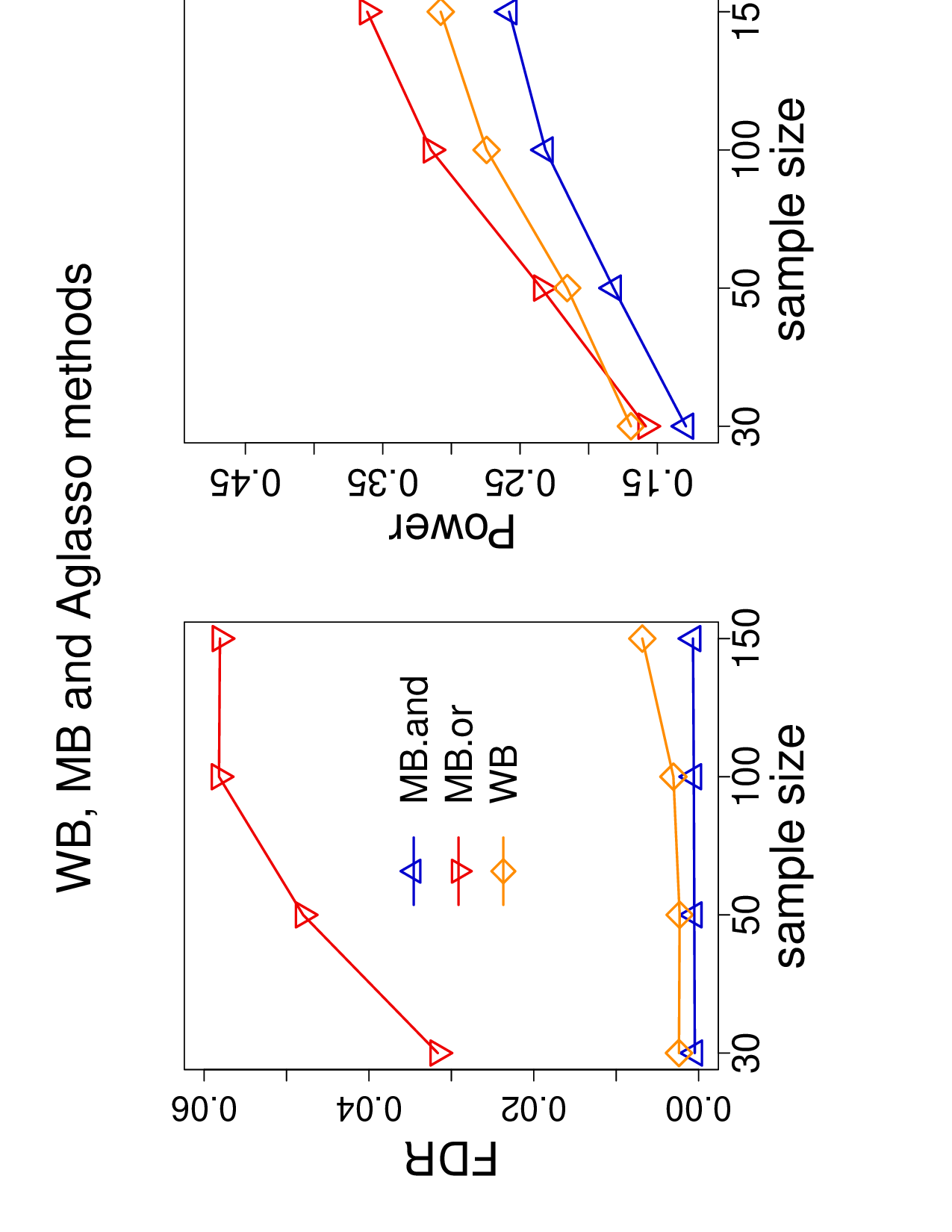}}
\end{center}
\caption{\label{dess4.fg}FDR and power
  estimated values as a function  of $n$ for $p=100$ and $I_{s}=3$. The results
  are calculated on the basis 
  of $N_{G}=20$ simulated graphs and $N_{X}=20$ runs of matrices $\X$ for each
  simulated graph. Our procedures were carried out with $K=2.5$. The
  value of $D$ was  equal to 3 for the {\tt QE} method and 5 for the
  others. For the procedures {\tt MB.or}, {\tt MB.and} and {\tt WB}
  the   tuning parameter $\alpha$ was taken equal to $5\%$.}
\end{figure}

\subsubsection{Effect of graph sparsity}

We have seen that when $p$ is large,  the GGMselect procedures using the
graphs families {\tt QE} and {\tt EW} are powerful and have a good
control of the FDR. Nevertheless, the simulated graphs were sparse,
$I_{s}=3$, and it may be worthwhile testing how the methods perform
when the graph sparsity varies. Because the performances depend
strongly on the simulated graph, the FDR and power are estimated on
the basis of a 
large number of simulations: the number of simulated graphs $N_{G}$
equals 50 and the number of simulated matrices $\X$ for each graph,
$N_{X}$ equals 50. In order to keep  reasonable computing times, we
choose $p=30$. The results are shown
in Figure~\ref{dess1.fg}. The standard errors of the
means over the $N_{G}$ graphs are smaller than 0.0055 for the FDR,
and 0.025 for the power. 


\begin{figure}[htbp]
\begin{center}
{\includegraphics[height=12.5cm,width=8cm,angle=270]{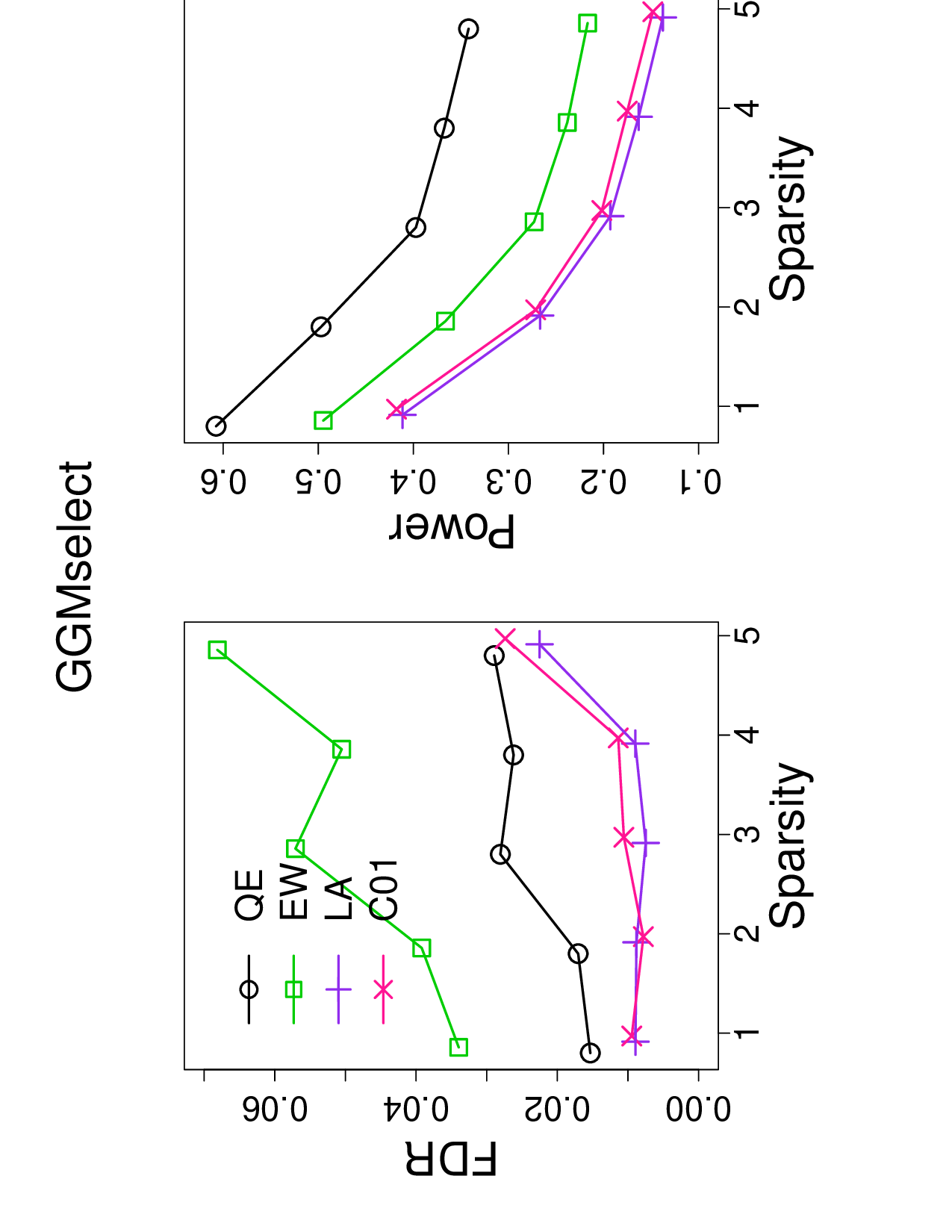}

\includegraphics[height=12.5cm,width=8cm,angle=270]{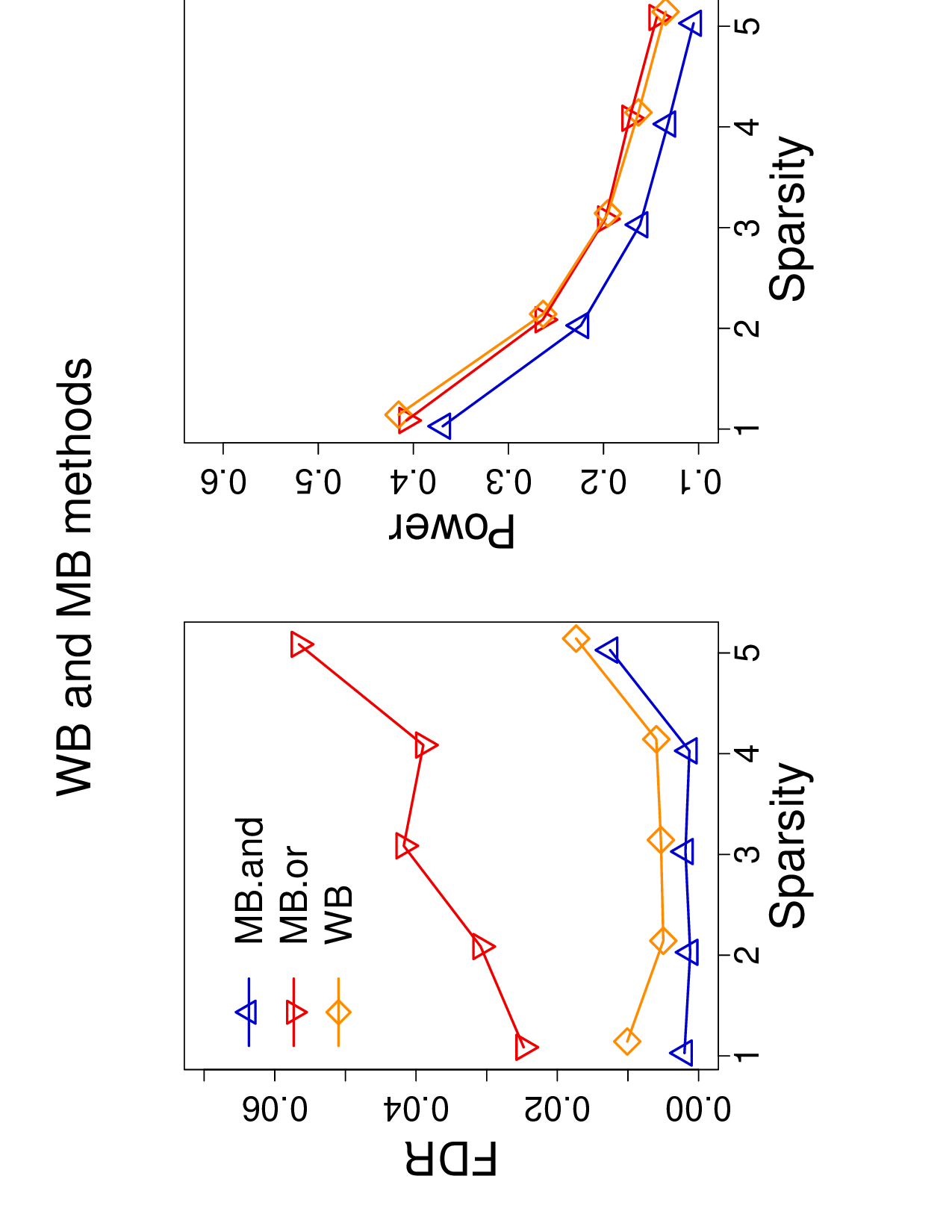}
}
\end{center}
\caption{\label{dess1.fg} Graphs of FDR and power
  estimated values versus the graph sparsity $I_{s}$, for
  $p=30$ and $n=30$. The results are
  calculated on the basis 
  of $N_{G}=50$ simulated graphs and $N_{X}=50$ runs of matrices $\X$ for each
  simulated graph. Our procedures were carried out with $K=2.5$ and
  $D=5$. For the procedures {\tt MB.or}, {\tt MB.and} and {\tt WB} the
  tuning parameter $\alpha$ was taken equal to $5\%$.}
\end{figure}

For all methods the power decreases when $I_{s}$ increases. The FDR
values are slightly increasing with $I_{s}$ for the {\tt EW} and {\tt
  MB.or} methods. The superiority of {\tt QE} over the others is
clear. {\tt EW} is more powerful then {\tt LA}, {\tt C01}, {\tt MB}
and {\tt WB} methods but its FDR is greater. 

\subsubsection{GGMselect : mixing the graphs families}

Our procedure allows to mix several graphs families. It may happen
that some graphs, or type of graphs, are known to be good candidates
for modelling the observed data set. In that case, they can be
considered in the procedure, and thus compete with
$\widehat{\mathcal{G}}_{\EW}$ or
$\widehat{\mathcal{G}}_{\QE}$. This can be done with the function {\tt
selectMyFam} of the package {\tt GGMselect}.

Considering the results of our
simulation study, we could ask if mixing $\widehat{\mathcal{G}}_{\LA}$
or  $\widehat{\mathcal{G}}_{\CO 1}$ with $\widehat{\mathcal{G}}_{\EW}$
would not give a better control of the FDR than {\tt EW} while keeping
high values of the power. To answer this question we carried out
simulation studies taking
$\widehat{\mathcal{G}}_{{\tt mix}} = \widehat{\mathcal{G}}_{\CO 1} \cup
\widehat{\mathcal{G}}_{\LA} \cup \widehat{\mathcal{G}}_{\EW}$  
as the family  of graphs. In all considered cases for $p$, $n$,
$I_{s}$, the FDR and power values  based on $\widehat{\mathcal{G}}_{{\tt
      mix}}$ are 
similar to those based on $\widehat{\mathcal{G}}_{\EW}$. This result
can be explained by studying the behavior of the MSEP  estimated  by
averaging the quantities $\|\Sigma^{1/2} 
(\widehat{\theta}_{\widehat{G}} - \theta)\|^{2}$  over
the $N_{G} \times N_{X}$ runs.  The results are given at
Figure~\ref{dess11.fg}. One can see that the smallest values of the MSEP are
obtained for {\tt QE}, then {\tt EW}. Moreover,  the MSEP decreases when the
power increases, while it does not show any particular tendency when
the FDR varies. Considering these tendencies together with the fact
that our procedure aims at
minimizing the MSEP, we can understand why we do not improve the
performances of {\tt EW} by considering $\widehat{\mathcal{G}}_{{\tt
      mix}}$. 

\begin{figure}[htbp]
\begin{center}
{\includegraphics[height=14cm,width=6cm,angle=270]{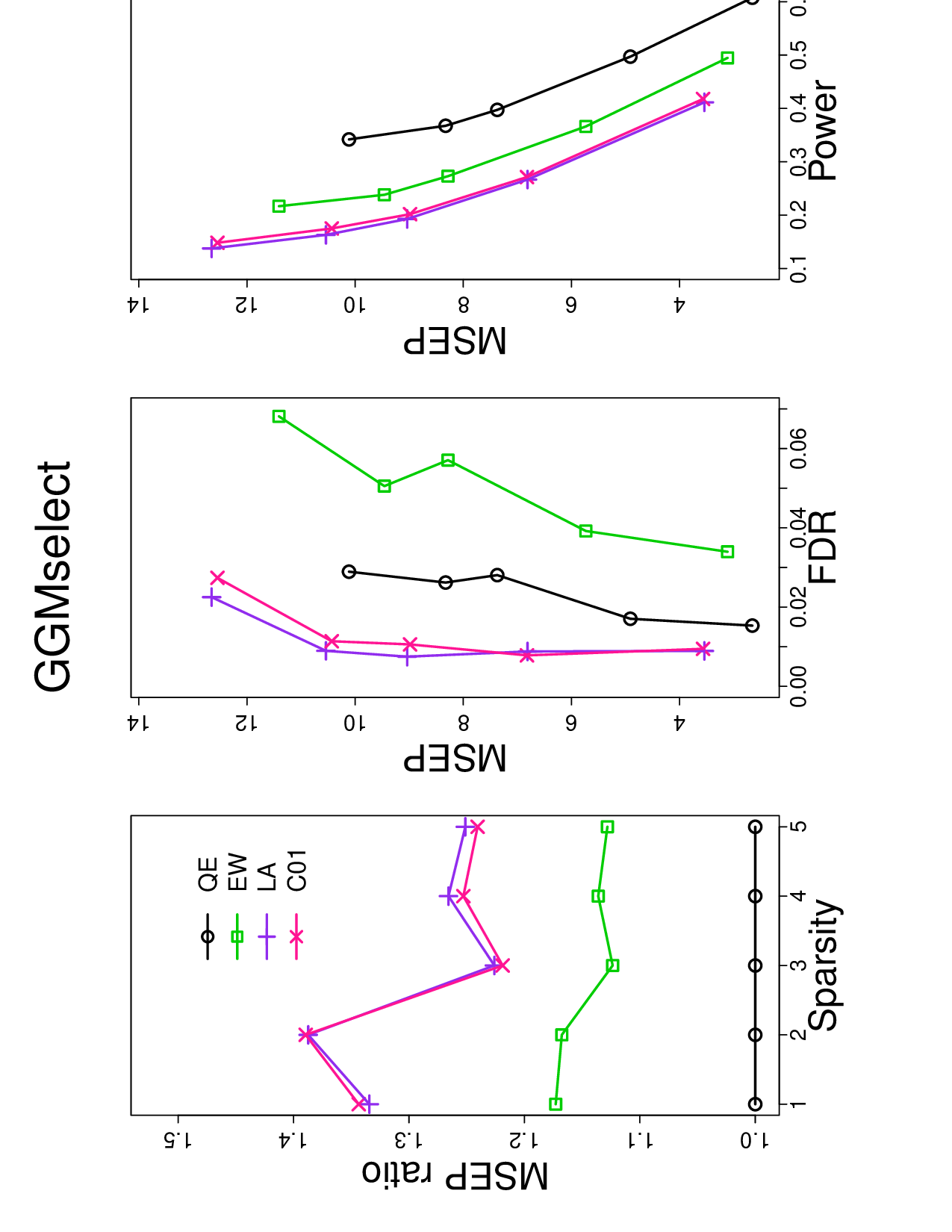}

\includegraphics[height=14cm,width=6cm,angle=270]{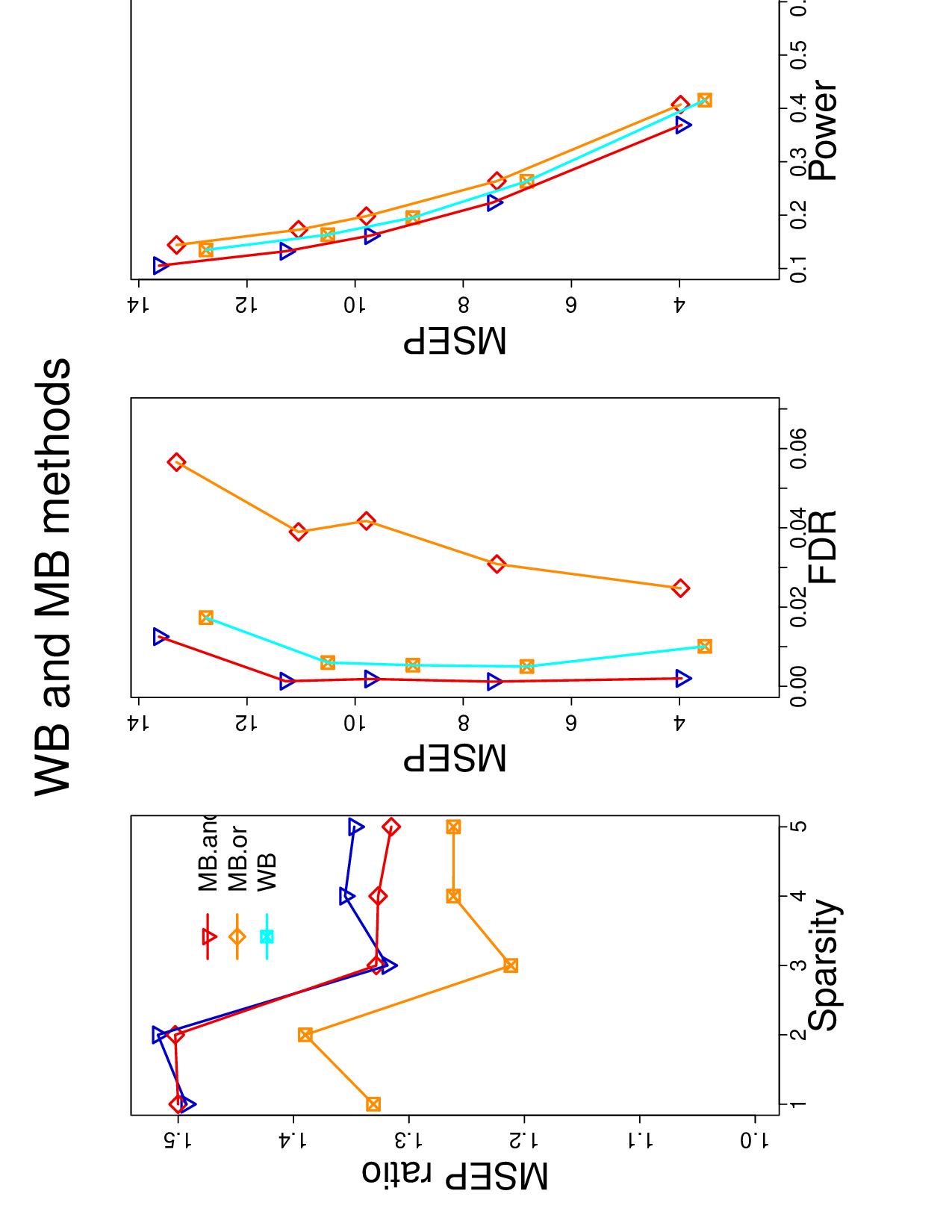}}
\end{center}
\caption{\label{dess11.fg}Values of the MSEP  for
  the simulation results given at Figure~\ref{dess1.fg}. The first
  graphic on the left presents the ratio of the MSEP over the MSEP
  of the {\tt QE} method. The two others present the MSEP
  versus the FDR and the power.}
\end{figure}

\subsubsection{Comparison with the BIC criteria}
As it was shown in the fixed design regression model \cite{BGH09}, the BIC
criterion overfits in a high-dimensional setting. To compare BIC
with our procedure for estimating an empty graph, we simulate
$N_{X}$ matrices $\X$ composed of $n=100$ i.i.d. rows distributed as
$\mathcal{N}_{p}(0, I_{p})$, with $p=1000$.  We consider the
collection of graphs given by the 
procedure LA, with $D=7$, and choose among this collection using our criterion,
and the BIC criterion. 
The mean of
the number of false positive, calculated on the basis of 100
simulations, equals 0 for our procedure, and equals 1077 when applying
the BIC procedure. This confirms that BIC should not be used for such
problems.

\subsection{Summary}

We recommend to use the {\tt QE} method if the calculation of
$\widehat{G}_{K,\text{and}}$ and
$\widehat{G}_{K,\text{or}}$ is possible. Next, working out the
family  $\widehat{\mathcal{G}}_{\QE}$ can always be done using
some suitable  algorithms if necessary (as a stepwise procedure for
example). When $p$ is large, {\tt QE}  can be used for small values of
$D$ ($D=3$ or even $D=2$). It may perform better than all the
others when $n$ is small.  The procedure 
based on $\widehat\G_{\EW}$  can be used for large $p$: the gain in
power over {\tt LA}, {\tt C01}, {\tt MB} and  {\tt WB} methods is
significant, but the FDR is slightly greater.  
  The {\tt LA} and {\tt C01} methods are running very quickly, keep
  the FDR under control and 
  are slightly more powerful than {\tt WB} and {\tt MB.and}.

\section{\label{section_ex}Breast cancer data}

We test our procedure on a gene expression data set provided in Hess et al.~\cite{Hess}. The data set
concerns 133 patients with breast cancer treated with
chemotherapy. The patient response to chemotherapy  can be classified
 into two groups according
to a pathologic complete response (PCR) or residual disease
(NotPCR). Natowicz et al.~\cite{Nato} selected 26 genes having a high
predictive value for this response. We propose to estimate possible
regulation dependencies between these 26 genes, for each group of
patients : 34 patients achieved PCR, while 99 did not. 

This data set was already considered by Ambroise et al.~\cite{Ambroise} who proposed a method to infer a Gaussian
Graphical Model taking into account some hidden structure on the
nodes. They simultaneously infer the nodes groups and the graph using
an $l^{1}$ penalized likelihood criterion. Their method is
performed in an iterative EM-like algorithm, names SIMoNe, available
in an R-package~\cite{Chiquet}. 

We apply our procedure for choosing among the graphs coming from the
families {\tt QE}, {\tt LA}, {\tt C01}, {\tt EW} and from the family
of graphs proposed by the SIMoNe algorithm. 

We only present results for the group of patients not achieving PCR. The chosen graph presents 14
edges. The minimum value of the
criteria equals 686.64 and is achieved for the {\tt QE} family. Let us
assess the stability of the results between the different methods, and
the  stability when the constant $K$ in our procedure is varying.\\

{\bf Stability  between the different methods}.
Let us  have a look at graphs that minimize the
criteria for each family considered. The results are given at
Figure~\ref{fig.cancer}. Firstly, let us note that for this data set,
the SIMoNe algorithm gives results similar to the {\tt LA} method.  Secondly we
remark that some 
characteristics of the {\sl best} graph are shared by the others, as
for example the path between KIA1467, GAMT, E2F3, MELK and RRM2. This
 allows to be  confident in that motif. All methods allocate
edges between genes ZNF552, FLJ10916, JMJD2B, BECNI, PDGFRA, but the
motifs connecting these genes differ between methods. This suggests
that these genes are probably linked, but the estimation of the motif
is not completely secure. If the motif is an hub centered in JMJD2B, as
it is shown by the {\tt QE} method, then instability in estimating
this motif is not surprising: it is more difficult to estimate the
neighbours of highly connected nodes. \\

\begin{figure}[htbp]
\begin{center}
\hspace{-1cm}\includegraphics[height=9cm,width=6cm,angle=270]{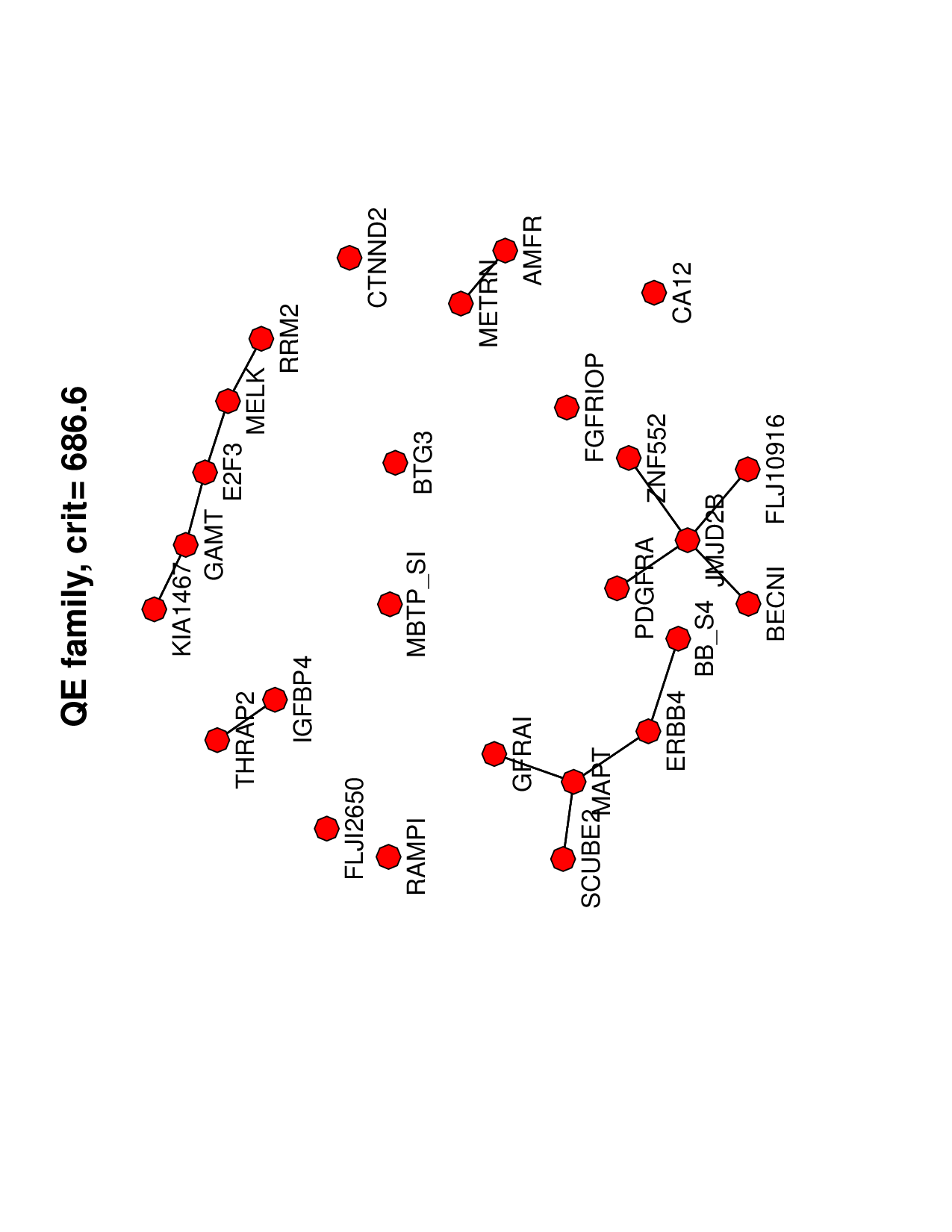}~\hspace{-3cm}\includegraphics[height=9cm,width=6cm,angle=270]{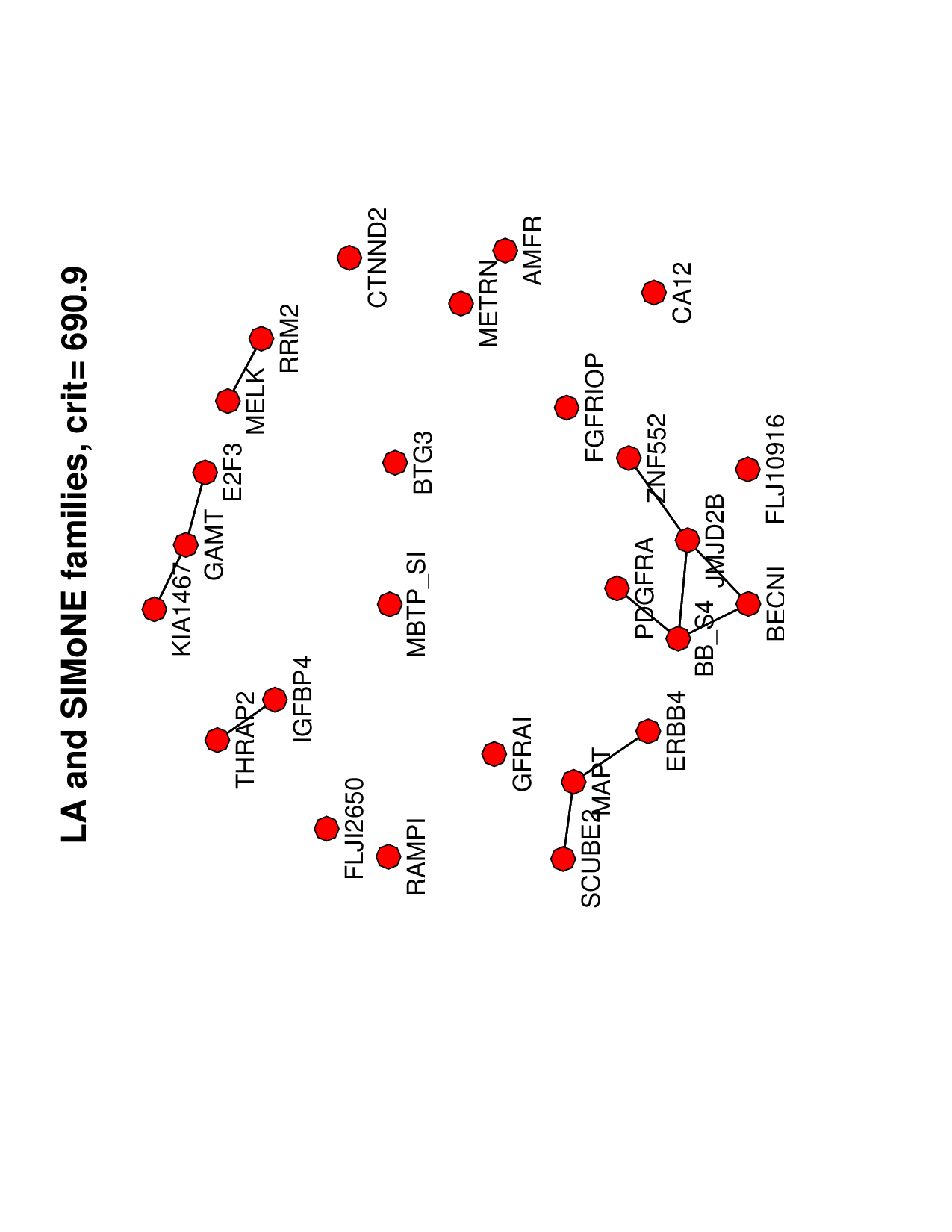}

\includegraphics[height=9cm,width=6cm,angle=270]{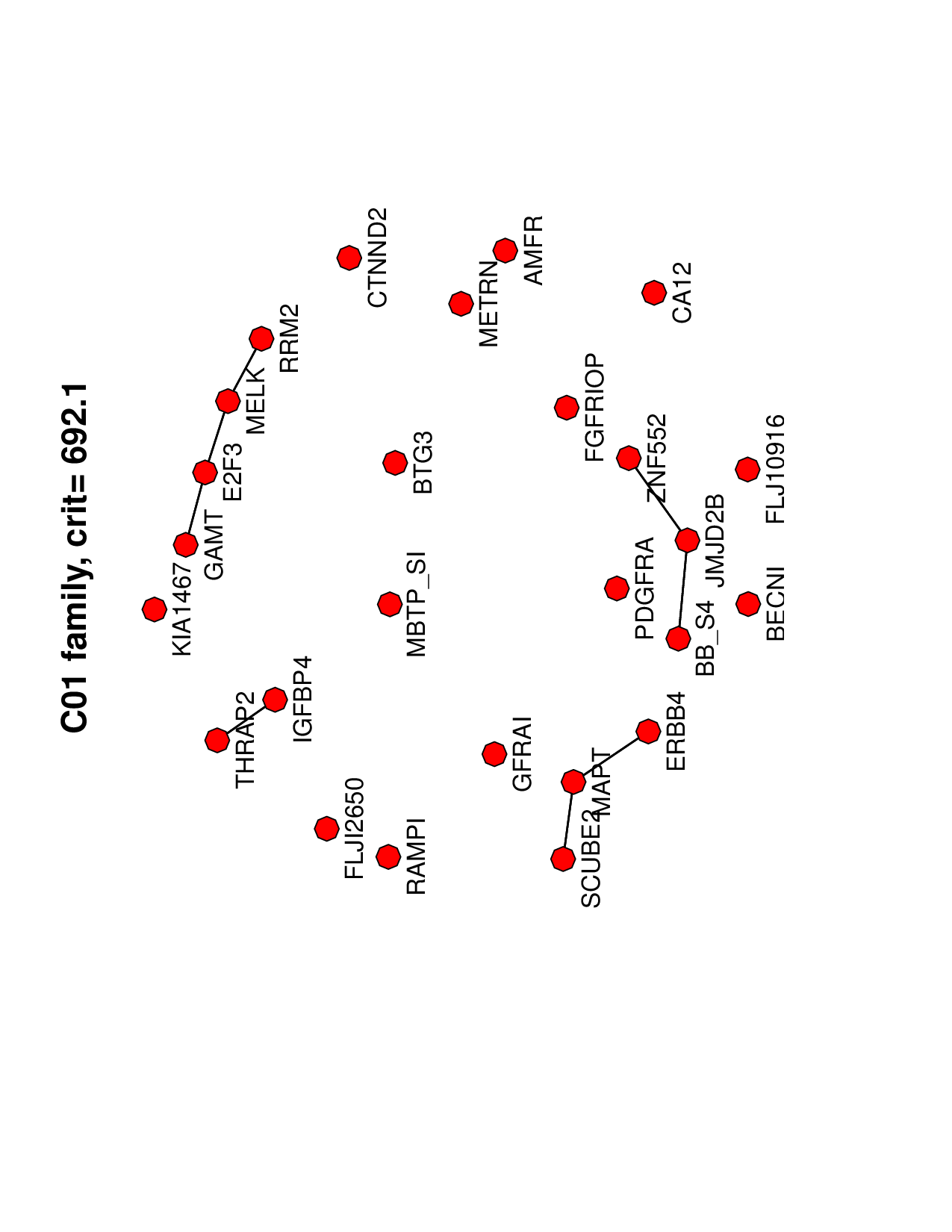}~\hspace{-3cm}\includegraphics[height=9cm,width=6cm,angle=270]{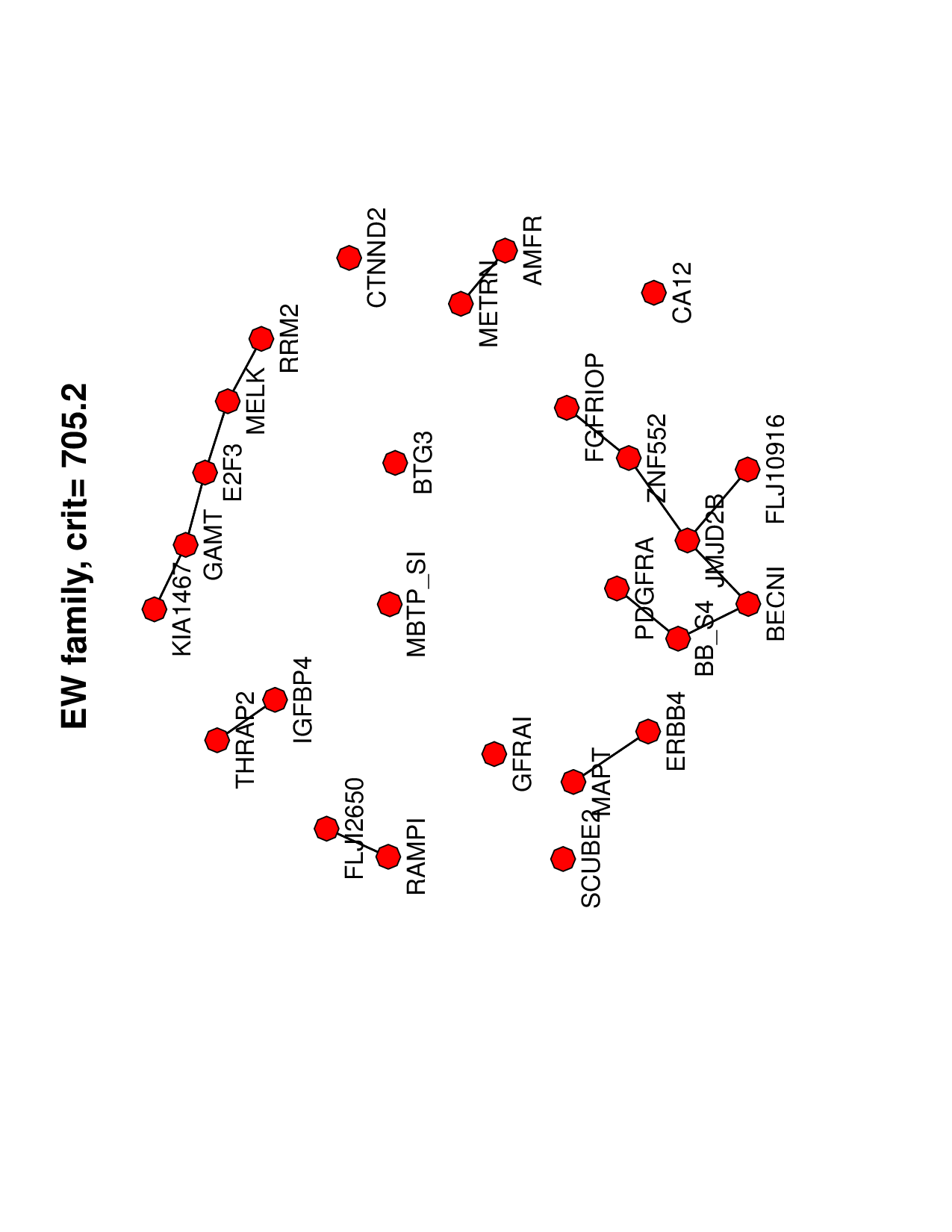}
\caption{\label{fig.cancer} For each considered family, criterion (\ref{definition_critere}) value
  and estimated graph for the group of patients
  with residual disease. The graph minimizing
  the criterion (\ref{definition_critere}) is given by the {\tt QE} method.}
\end{center}
\end{figure}

{\bf Stability  when the constant $K$  varies}.
The estimated graphs based on the {\tt LA} family when $K$ varies from
1.5 to 3 are presented at Figure~\ref{fig.cancerKvaries}. Increasing
$K$ to 3 leads to delete two edges. The difference between $K=2$ and
$K=2.5$ is more important: some of the edges that were detected by the {\tt
 QE} method with $K=2.5$ are detected by the {\tt LA} method with
$K=2$. There is no difference between the estimated graphs using 
$K=1.5$ or $K=2$. This suggests that all potentially detectable edges
with the {\tt LA} method are detected with $K=2$. 

\begin{figure}[htbp]
\begin{center}
\includegraphics[height=9cm,width=6cm,angle=270]{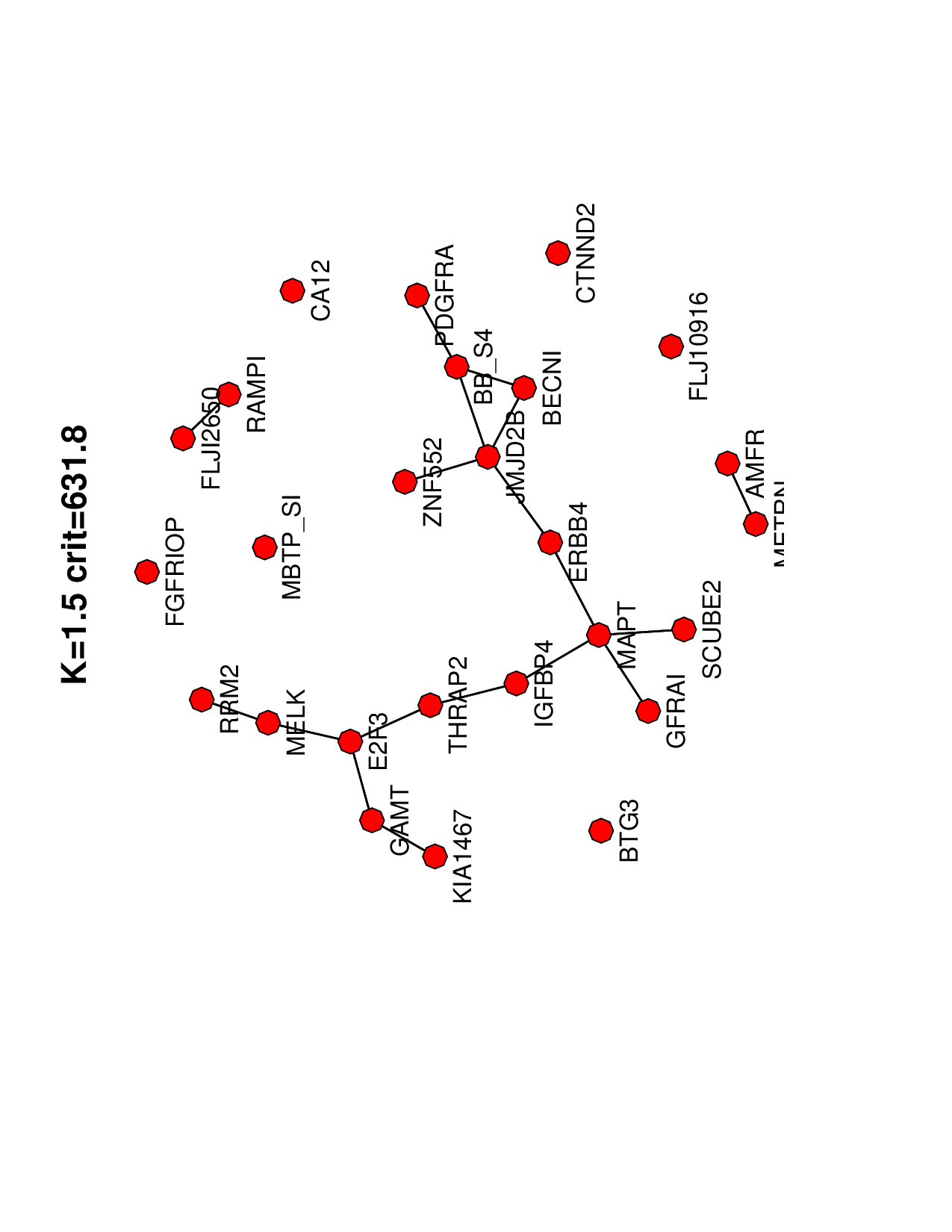}~\hspace{-3cm}\includegraphics[height=9cm,width=6cm,angle=270]{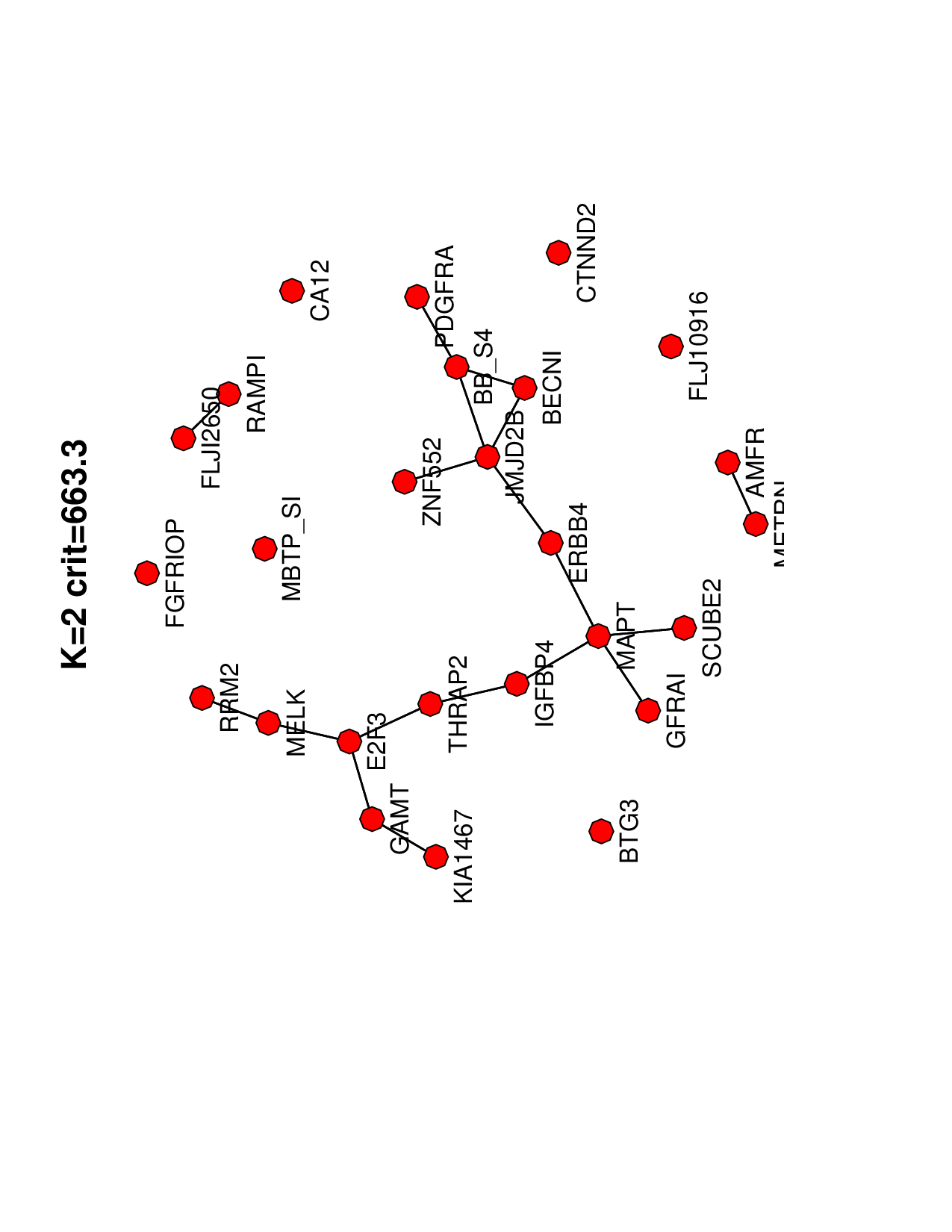}

\includegraphics[height=9cm,width=6cm,angle=270]{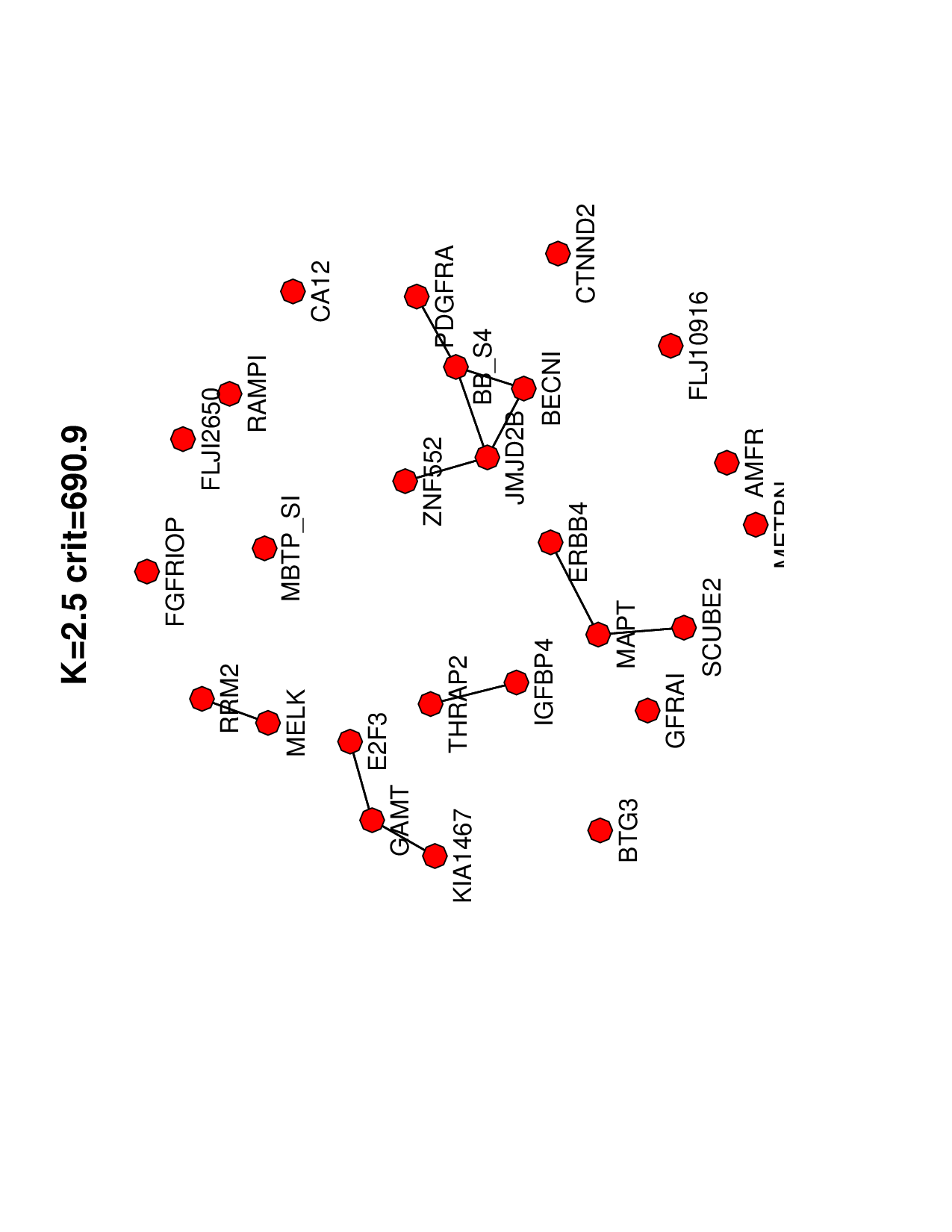}~\hspace{-3cm}\includegraphics[height=9cm,width=6cm,angle=270]{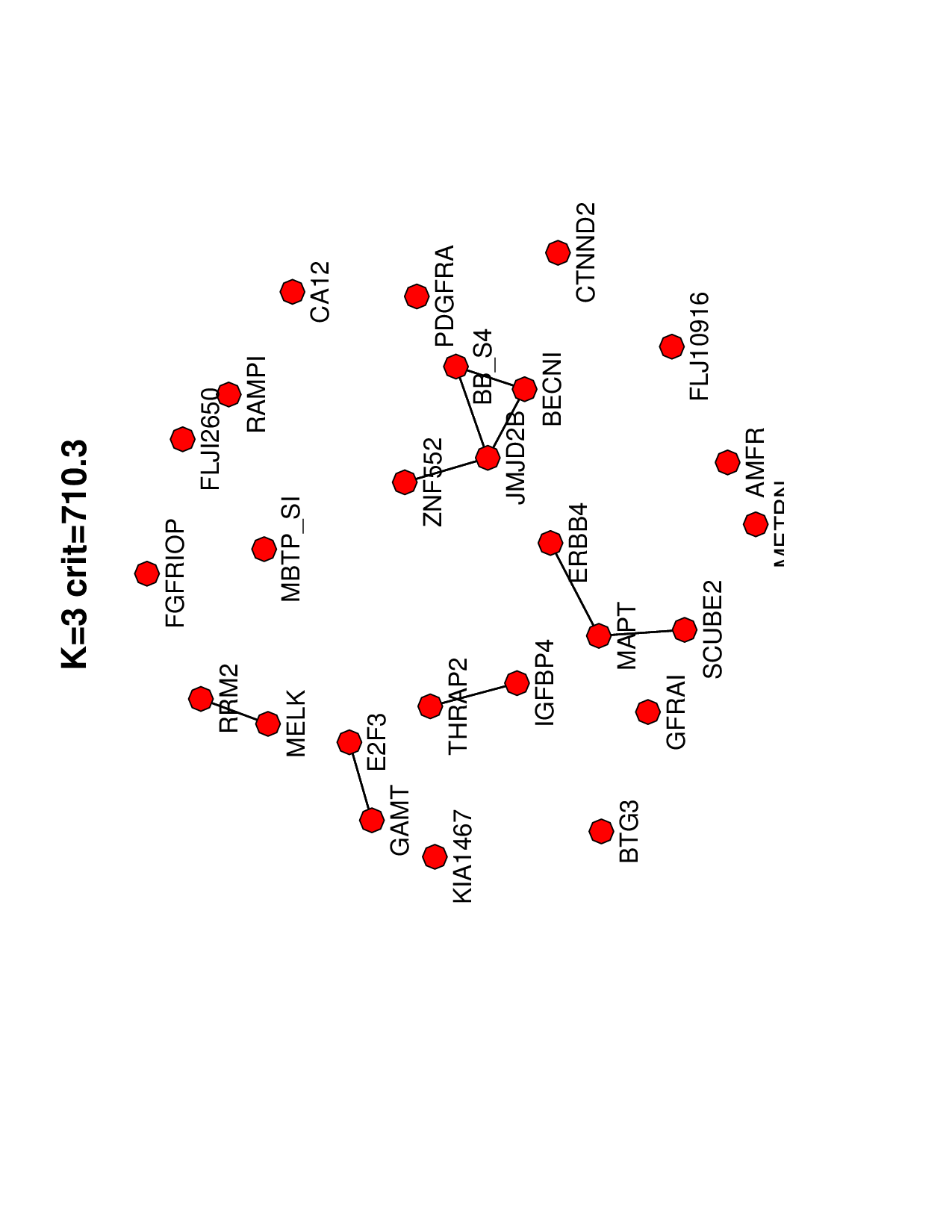}
\caption{\label{fig.cancerKvaries} For the family {\tt LA},  criteria value
  and estimated graph when the constant $K$ varies.}
\end{center}
\end{figure}

\section{Proofs}\label{section_preuves}

In the sequel, $L$, $L_1$, $L_2$,$\ldots$ denote universal constants that may vary from line to line. The notation $L(.)$ specifies the dependency on some quantities. 

\subsection{Proof of Theorem \ref{proposition_risque}}
We write $\G_{D}$ for the family of all the graph with nodes in $\Gamma$ and
degree less than $D$. We remind the reader that 
for any graph $G\in\G_{D}$ we have noted $\Theta_{G}$ the space of $p\times p$
matrices $\theta$ such that $\theta_{a,b}$ is non zero if and only if there is
en edge between $a$ and $b$ in $G$.  We also set
$\bar\Theta_{D_{\max}}=\cup_{G\in\G_{D_{\max}}}\Theta_{G}$. 
norm $\|\cdot\|_{q\times p}$ on $q\times p$ matrices.
We set $\lambda=(1-\sqrt{\gamma})^2$ and introduce the event
$$\B=\ac{\lambda\|\Sigma^{1/2}A\|_{p \times p}\leq {1\over \sqrt n}\|\X
A\|_{n\times p}\leq \lambda^{-1}\|\Sigma^{1/2}A\|_{p \times p},\ \textrm{for all
}A\in\theta+\bar\Theta_{D_{\max}}}.$$
On this event we can control the $l^2$-loss of $\thetat$ by the empirical loss
since 
\begin{equation}\label{borne1}
\|\Sigma^{1/2}(\thetat-\theta)\|^2_{p\times p}\1_{\B}\leq {\lambda^{-2}\over
n}\|\X(\thetat-\theta)\|^2_{n \times p}\1_{\B}\,.
\end{equation}
Moreover, according to Lemma 1 in \cite{giraud08},  we have $\P(\B^c)\leq
2e^{-n(\sqrt{\gamma}-\gamma)^2/2}$ when Condition~(\ref{condition_degree}) is
met. To bound the risk of the procedure, we consider apart the events $\B$ and
$\B^c$.

\subsubsection{Bound on $\E\cro{\|\Sigma^{1/2}(\thetat-\theta)\|^2_{p \times
p}\1_{\B}}$}
We have $\X=\X\theta+\eps$, where $\eps$ is a $n\times p$  matrix  distributed
as follows: for each $a\in\Gamma$, the column $\eps_{a}$ is independent of
$\X_{-a}$ and is distributed according to the Gaussian law
$\mathcal{N}(0,\sigma^2_{a}I_{n})$, with $\sigma^2_{a}=1/\Omega_{a,a}$. For any
$G\in\G_{D}$, we write henceforth $\theta^{G}$ for the orthogonal projection of
$\theta$ on $\Theta_{G}$ according to the Euclidean norm
$\|\Sigma^{1/2}\cdot\|_{p\times p}$ on $\R^{p\times p}$.
Similarly, we write $\bar \theta^{G}$ for the orthogonal projection of $\theta$
on $\Theta_{G}$ according to the (random) Euclidean norm $\|\X\cdot\|_{n\times
p}$ on $\R^{p\times p}$.
For any $G\in\G_{D}$, we write $d_{a}(G)$ for the degree of the node $a$ in $G$
and introduce the positive quantity
\begin{multline*}
R(G)=\sum_{a=1}^p\pa{1+{\pen(d_{a}(G))\over n-d_{a}(G)}}\pa{\|\X(\theta_{a}-\bar
\theta_{a}^G)\|^2+2|<\X\theta_{a}-\X\bar\theta^G_{a},\eps_{a}>|}\\
+\sum_{a=1}^p{\pen(d_{a}(G))\over n-d_{a}(G)}\|\eps_{a}\|^2,
\end{multline*}
where $\|.\|$ and $<.,.>$ denote the canonical norm and scalar product on
$\R^n$.
Following the same lines as in the beginning of the proof of Theorem~2
in \cite{BGH09}, we get for any $G^*$ in $\widehat \G$
\begin{equation}\label{borne2}
{K-1\over K}\|\X(\thetat-\theta)\|_{n\times p}^2\1_{\B}\leq
R(G^*)\1_{\B}+\Delta(\widehat G)\1_{\B}
\end{equation}
with 
$$\Delta(G)=\sum_{a=1}^p{\sigma^2_{a}}\pa{KU_{\nei_{G}(a)}-{\pen(d_{a}(G))\over
n-d_{a}(G)}V_{\nei_{G}(a)}}_{+}$$
where $U_{\nei_{G}(a)}$ and $V_{\nei_{G}(a)}$ are two independent $\chi^2$
random variables with $d_{a}(G)+1$ and $n-d_{a}(G)-1$ degrees of freedom.

We note that under Condition (\ref{condition_degree}) there exists some constant
$c(\gamma)$ depending on $\gamma$ only, such that 
$$\pen(d)\leq c(\gamma) K (d+1)\log (p),\quad\textrm{for all }
d\in\ac{0,\ldots,D_{\max}},$$
see Proposition~4 in \cite{BGH09}. In particular, 
we have  for any $G\in\G_{D}$ 
$${\pen(d_{a}(G))    \over n-d_{a}(G)}\leq{c(\gamma)K(D_{\max}+1)\log(p)\over
n/2}\leq 4K\gamma c(\gamma) = L_{\gamma,K}.$$
Using this bound together with  
$$|2<\X\theta-\X\bar\theta_{a}^G,\eps_{a}>|\leq \|\X(\theta_{a}-\bar
\theta_{a}^G)\|^2+\sigma_{a}^2\xi_{a,G}^2,$$
 where for any  $G\in\G$ and  $a\in\ac{1,\ldots,p}$, the random variable 
 $$\xi_{a,G}=<\X(\theta_{a}-\bar
\theta_{a}^G),\eps>/(\sigma_{a}\|\X(\theta_{a}-\bar \theta_{a}^G) \|)$$ is 
standard Gaussian, we obtain
\begin{eqnarray*}
R(G)&\leq&(1+L_{\gamma,K})\sum_{a=1}^p\pa{2\|\X(\theta_{a}-\bar
\theta_{a}^G)\|^2+\sigma_{a}^2\xi_{a,G}^2}+{\pen(d_{a}(G))    \over
n-d_{a}(G)}\,\|\eps_{a}\|^2\\
&\leq & 2(1+L_{\gamma,K})\|\X(\theta-\bar \theta^G)\|_{n\times
p}^2+(4+L_{\gamma,K})\sum_{a=1}^p\pen(d_{a}(G))\sigma_{a}^2+r(\G_{D})
\end{eqnarray*}
where $r(\G_{D})$ equals
$$\sum_{a=1}^p\sigma_{a}^2\pa{(1+L_{\gamma,K})\sum_{G\in\G}\cro{\xi_{a
,G}^2-\pen(d_{a}(G))}_{+}+L_{\gamma,K}\cro{\|\eps_{a}\|^2/\sigma_{a}^2-3n/2}_{+}
}.$$
Furthermore, we have $\|\X(\theta-\bar \theta^G)\|_{n\times p}\leq \|\X(\theta-
\theta^G)\|_{n \times p}$ and on the event $\B$ we also have $\|\X(\theta-
\theta^G)\|^2_{n\times p}\leq n\lambda^{-2}\|\Sigma^{1/2}(\theta-
\theta^G)\|^2_{p \times p}$ so that on $\B$
$$R(G)\leq L'_{\gamma,K}\pa{n\lambda^{-2}\|\Sigma^{1/2}(\theta-\theta^G)\|^2_{p
\times p}+\sum_{a=1}^p\pen(d_{a}(G))\sigma_{a}^2}+r(\G_{D}),$$
with $L'_{\gamma,K}=\max(2+2L_{\gamma,K},4+L_{\gamma,K})$.
Putting this bound  together with~(\ref{borne1}) and(\ref{borne2}), we obtain
\begin{eqnarray*}
\|\Sigma^{1/2}(\thetat-\theta)\|^2_{p\times p}\1_{\B} &\leq&
{K\over n\lambda^2(K-1)}\pa{\inf_{G^*\in\widehat \G}R(G^*)+\Delta(\widehat
G)}\1_{\B}\\ 
&\leq&
L''_{\gamma,K}\inf_{G^*\in\widehat\G}\pa{\|\Sigma^{1/2}(\theta-\theta^{G^*}
)\|^2_{p\times p}+\sum_{a=1}^p\pen(d_{a}(G^*)){\sigma_{a}^2\over n}}\\ 
& &+L''_{\gamma,K}n^{-1}\pa{r(\G_{D})+\Delta(\widehat G)}.
\end{eqnarray*}
We note that
$$n^{-1}\E(r(\G_{D}))\leq \sum_{a=1}^p{\sigma_{a}^2\over n}
(1+L_{\gamma,K})(3+\log(p))$$
and we get from the proof of Theorem 1 in \cite{giraud08} that 
\begin{eqnarray*}
n^{-1}\E(\Delta(\widehat G))&\leq& n^{-1}\E\pa{\sup_{G\in\G_{D}}\Delta(G)}\leq
 K\sum_{a=1}^p{\sigma_{a}^2\over n} (1+\log(p)).
\end{eqnarray*}
Since $\pen(d)\leq c(\gamma) K (d+1)\log (p)$, the latter bounds enforce the
existence of constants $L_{\gamma,K}$ and $L'_{\gamma,K}$ depending on $\gamma$
and $K$ only, such that
\begin{eqnarray*}
\lefteqn{\E\cro{\|\Sigma^{1/2}(\thetat-\theta)\|^2_{p\times p}\1_{\B}}}\\
&\leq&L_{\gamma,K}\,\E\cro{\inf_{G^*\in\widehat\G}\pa{\|\Sigma^{1/2}
(\theta-\theta^{G^*})\|^2_{p \times p}+\sum_{a=1}^p\big(\log
(p)\vee\pen[d_{a}(G^*)]\big){\sigma_{a}^2\over n}}}\\
&\leq& L'_{\gamma,K}\,\log(p)\pa{
\E\cro{\inf_{G^*\in\widehat\G}\textrm{MSEP}(\widehat\theta_{G^*})}\vee\sum_{a=1}
^p{\sigma_{a}^2\over n}}.
\end{eqnarray*}
Finally, we note that $\sum_{a=1}^p{\sigma_{a}^2/ n}=\MSEP(I)$.

\subsubsection{Bound on $\E\cro{\|\Sigma^{1/2}(\thetat-\theta)\|^2_{p\times
p}\1_{\B^c}}$\label{section_risque_petit_evenement}}

We now prove the bound~\\
$\E\cro{\|\Sigma^{1/2}(\thetat-\theta)\|^2_{p\times p}\1_{\B^c}} \leq  
Ln^3\text{tr}(\Sigma)\sqrt{\mathbb{P}(\mathbb{B}^c)}$. 
We have
$$\E\cro{\|\Sigma^{1/2}(\thetat-\theta)\|^2_{p\times p}\1_{\B^c}}=
\sum_{a=1}^p\E\cro{\|\Sigma^{1/2}(\thetat_{a}-\theta_{a})\|^2\1_{\B^c}}$$
and we will upper bound  each of  the $p$ terms in this sum. Let $a$ be any node
in $\Gamma$.
Given a graph $G$, the vector $[\widehat{\theta}_{G}]_a$ depends on $G$ only
through the neighborhood $\nei_G(a)$ of $a$ in $G$.
Henceforth, we write $\widehat{\theta}_{\nei_{\widehat{G}}(a)}$ for
$\thetat_{a}$ in order to emphasize this dependency. By definition
$\widehat\theta_{\nei_{\widehat{G}}(a)}$ is the least-squares estimator of
$\theta_a$ with support  included in $\nei_{\hat{G}}(a)$. Let us apply the same
arguments as in the proof of Lemma 7.12 in \cite{Verzelen08}. By Cauchy-Schwarz
inequality, we have
\begin{equation}\label{majoration1}
 \E\cro{\|\Sigma^{1/2}(\thetat_{a}-\theta_a)\|^2\mathbf{1}_{\mathbb{B}^c}} \leq
\sqrt{\mathbb{P}(\mathbb{B}^c)}\
\sqrt{\mathbb{E}\cro{\|\Sigma^{1/2}(\widehat{\theta}_{\nei_{\widehat{G}}(a)}
-\theta_a)\|^4}}.
\end{equation}
Let $\mathcal{N}_D(a)$  be the set made of all the subsets of
$\Gamma\setminus\{a\}$ whose size are smaller than $\gamma
n/[2(1.1+\sqrt{\log(p)})^2]$. By Condition (\ref{condition_degree}), it holds
that 
the estimated neighborhood $\nei_{\widehat{G}}(a)$ belongs to
$\mathcal{N}_D(a)$, so H\"older inequality gives
\begin{eqnarray*}
\lefteqn{\mathbb{E}\cro{\|\Sigma^{1/2}(\widehat{\theta}_{\nei_{\widehat{G}}(a)}
-\theta_a)\|^4}=\sum_{\nei(a) \in
\mathcal{N}_D(a)}\mathbb{E}\cro{\mathbf{1}_{\nei_{\widehat{G}}(a)=\nei(a)}
\|\Sigma^{1/2}(\widehat{\theta}_{\nei(a)}-\theta_a)\|^4}}\\
&\leq& \sum_{\nei(a) \in
\mathcal{N}_D(a)}\mathbb{P}\left[\nei_{\widehat{G}}(a)=\nei(a)\right]^{1/u}
\mathbb{E}\cro{\|\Sigma^{1/2}(\widehat{\theta}_{\nei(a)}-\theta_a)\|^{4v}}^{1/v}
\\
&\leq& \sum_{\nei(a) \in
\mathcal{N}_D(a)}\mathbb{P}\left[\nei_{\widehat{G}}(a)=\nei(a)\right]^{1/u}
\sup_{\nei(a) \in
\mathcal{N}_D(a)}\mathbb{E}\cro{\|\Sigma^{1/2}(\widehat{\theta}_{\nei(a)}
-\theta_a)\|^{4v}}^{1/v},
\end{eqnarray*}
where  $v=\left\lfloor\frac{n}{8}\right\rfloor$, and $u=\frac{v}{v-1}$
(we remind the reader that $n$ is larger than $8$). In particular, we have the
crude bound
\begin{multline*}
\sqrt{\mathbb{E}\cro{\|\Sigma^{1/2}(\widehat{\theta}_{\nei_{\widehat{G}}(a)}
-\theta_a)\|^4}}\\
\leq  \left[\text{Card}(\mathcal{N}_D(a))\right]^{1/2v}\sup_{\nei(a) \in
\mathcal{N}_D(a)}\mathbb{E}\cro{\|\Sigma^{1/2}(\widehat{\theta}_{\nei(a)}
-\theta_a)\|^{4v}}^{1/2v},
\end{multline*}
since the sum is maximum when every $\mathbb{P}[\nei(a)=\nei_{\widehat{G}}(a)]$
equals $[\text{Card}(\mathcal{N}_D(a))]^{-1}$. 
We first bound the term $\left[\text{Card}(\mathcal{N}_D(a))\right]^{1/2v}$. The
size of the largest subset in $\mathcal{N}_D(a)$ is smaller than $n/(2\log
(p))$, so the cardinality of $\mathcal{N}_D(a)$ is  smaller than $p^{D_{\widehat
\G}}$. Since $n$ is larger than 8, we get
\begin{eqnarray*}
 \left[\text{Card}(\mathcal{N}_D(a))\right]^{1/2v}\leq
\exp\left[\frac{n}{4\lfloor n/8\rfloor}\right]\leq L\ ,
\end{eqnarray*}
which ensures the bound
\begin{equation}
 \sqrt{\mathbb{E}\cro{\|\Sigma^{1/2}(\widehat{\theta}_{\nei_{\widehat{G}}(a)}
-\theta_a)\|^4}}\leq L\sup_{\nei(a) \in
\mathcal{N}_D(a)}\mathbb{E}\cro{\|\Sigma^{1/2}(\widehat{\theta}_{\nei(a)}
-\theta_a)\|^{4v}}^{1/2v}. \label{majoration_holder}
\end{equation}

To conclude, we need to upper bound this supremum.  Given a subset $\nei(a)$ in 
$\mathcal{N}_D(a)$, we define $\theta_{\nei(a)}$ 
as the vector in $\R^p$ 
such that $\Sigma^{1/2}\theta_{\nei(a)}$ is the orthogonal projection  of
$\Sigma^{1/2}\theta_a$ onto the linear span 
$\ac{\Sigma^{1/2}\beta:\textrm{supp}(\beta)\subset\nei(a)}$. 
Pythagorean inequality gives
\begin{eqnarray*}
\|\Sigma^{1/2}(\widehat{\theta}_{\nei(a)}-\theta_a)\|^{2}=\|\Sigma^{1/2}(\theta_
{\nei(a)}-\theta_a)\|^{2}+\|\Sigma^{1/2}(\widehat{\theta}_{\nei(a)}-\theta_{
\nei(a)})\|^{2}
\end{eqnarray*}
and  we obtain from  Minkowski's inequality that
\begin{multline*}
\mathbb{E}\cro{\|\Sigma^{1/2}(\widehat{\theta}_{\nei(a)}-\theta_a)\|^{4v}}^{
1/(2v)} \\
 \leq
\|\Sigma^{1/2}(\theta_{\nei(a)}-\theta_a)\|^{2}+\mathbb{E}\cro{\|\Sigma^{1/2}
(\widehat{\theta}_{\nei(a)}-\theta_{\nei(a)})\|^{4v}}^{1/(2v)}.\\
\end{multline*}
The first term is smaller than $\var(X_a)$. In order to bound the second term,
we use the following lemma which rephrases Proposition~7.8
in \cite{Verzelen08}.
\begin{lemma}\label{lemme_majoration_risque_lp}
 For any neighborhood $\nei(a)$ and any $r>2$ such that $n-|\nei(a)|-2r+1>0$, 
$$\mathbb{E}\cro{\|\Sigma^{1/2}(\widehat{\theta}_{\nei(a)}-\theta_{\nei(a)})\|^{
2r}}^{1/r} \leq Lr|\nei(a)|n\var(X_a)\,.$$
\end{lemma}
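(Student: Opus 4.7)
\textbf{Plan of proof for Lemma \ref{lemme_majoration_risque_lp}.} My strategy is to reduce $\|\Sigma^{1/2}(\widehat{\theta}_{\nei(a)}-\theta_{\nei(a)})\|^2$ to an explicit ratio of independent chi-squared variables and then bound its moments directly. Let me write $\varepsilon' = \X_a - \X_{\nei(a)}\theta_{\nei(a),\nei(a)}$ and $\sigma'^2=\var(X_a\mid X_b,\ b\in\nei(a))$. Since $\theta_{\nei(a)}$ is the $\Sigma$-orthogonal projection of $\theta_a$ onto the subspace of vectors supported on $\nei(a)$, one checks that $\theta_{\nei(a),\nei(a)}=\Sigma_{\nei(a),\nei(a)}^{-1}\Sigma_{\nei(a),a}$, i.e. the regression coefficients of $X_a$ on $(X_b,\ b\in\nei(a))$. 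The Gaussian conditioning lemma then gives $\varepsilon'\sim \mathcal{N}(0,\sigma'^2 I_n)$ independent of $\X_{\nei(a)}$, with $\sigma'^2\leq \var(X_a)$. Since $\widehat{\theta}_{\nei(a)}$ is the least-squares estimator with support in $\nei(a)$, the classical formula yields
$$\widehat{\theta}_{\nei(a),\nei(a)}-\theta_{\nei(a),\nei(a)}=(\X_{\nei(a)}^T\X_{\nei(a)})^{-1}\X_{\nei(a)}^T\varepsilon'\,.$$

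Next I would whiten the design by setting $Y=\X_{\nei(a)}\Sigma_{\nei(a)}^{-1/2}$, whose rows are i.i.d.\ standard Gaussians in $\R^{|\nei(a)|}$. The key identity $\Sigma_{\nei(a)}^{1/2}(\X_{\nei(a)}^T\X_{\nei(a)})^{-1}\Sigma_{\nei(a)}^{1/2}=(Y^TY)^{-1}$ combined with the conditional factorization $Y^T\varepsilon'/\sigma' \stackrel{d}{=} (Y^TY)^{1/2}Z$ (with $Z\sim\mathcal{N}(0,I_{|\nei(a)|})$ independent of $Y$) leads to the clean identity
$$\|\Sigma^{1/2}(\widehat{\theta}_{\nei(a)}-\theta_{\nei(a)})\|^2=\sigma'^2\, Z^T(Y^TY)^{-1}Z\,.$$

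The heart of the argument is then the distributional reduction
$$Z^T(Y^TY)^{-1}Z\ \stackrel{d}{=}\ U/W,\qquad U\sim\chi^2_{|\nei(a)|},\ W\sim\chi^2_{n-|\nei(a)|+1}\ \text{independent.}$$
For any fixed unit vector $u\in\R^{|\nei(a)|}$, a Schur-complement computation on the partition $Y=[y_1,Y_{-1}]$ (after rotating so that $u=e_1$) gives $u^T(Y^TY)^{-1}u=1/(y_1^T(I-P_{Y_{-1}})y_1)\sim 1/\chi^2_{n-|\nei(a)|+1}$, with the $\chi^2$ independent of $u$. Writing $Z=\|Z\|\cdot (Z/\|Z\|)$, using that $\|Z\|^2\sim\chi^2_{|\nei(a)|}$ is independent of $Z/\|Z\|$ by isotropy, and applying the previous fact conditionally on $Z/\|Z\|$ gives the claim.

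Once the reduction is in place, the conclusion is an exercise in chi-squared moments: under the hypothesis $n-|\nei(a)|-2r+1>0$, standard estimates give $\E[U^r]^{1/r}\leq |\nei(a)|+2r$ and $\E[W^{-r}]^{1/r}\leq 1/(n-|\nei(a)|+1-2r)$, so by independence,
$$\E\cro{\|\Sigma^{1/2}(\widehat{\theta}_{\nei(a)}-\theta_{\nei(a)})\|^{2r}}^{1/r}\leq \sigma'^2\,\frac{|\nei(a)|+2r}{n-|\nei(a)|+1-2r}\,,$$
which is in fact far stronger than the announced bound $L\,r\,|\nei(a)|\,n\,\var(X_a)$ (the looseness is harmless because the moment bound is multiplied by the exponentially small $\sqrt{\P(\B^c)}$ in the main argument). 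The main (mild) obstacle is the distributional identity in the third step; the rest is routine.
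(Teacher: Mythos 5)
The paper does not actually prove this lemma: it imports it verbatim as a rephrasing of Proposition~7.8 in Verzelen's work, so there is no internal proof to compare against. Your argument is a correct, self-contained derivation, and it follows the route one would expect that cited proposition to take: the conditioning step giving $\varepsilon'\sim\mathcal{N}(0,\sigma'^2I_n)$ independent of $\X_{\nei(a)}$ with $\sigma'^2=\var(X_a\mid X_{\nei(a)})\le\var(X_a)$ is right, the whitening identity $\|\Sigma^{1/2}(\widehat{\theta}_{\nei(a)}-\theta_{\nei(a)})\|^2=\sigma'^2\,Z^T(Y^TY)^{-1}Z$ checks out, and the reduction $Z^T(Y^TY)^{-1}Z\stackrel{d}{=}\chi^2_{|\nei(a)|}/\chi^2_{n-|\nei(a)|+1}$ is the classical Hotelling--$T^2$/Wishart fact, which your Schur-complement-plus-isotropy sketch proves correctly (the conditional law of $1/(u^TS^{-1}u)$ given $u=Z/\|Z\|$ does not depend on $u$, whence the independence you need). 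The moment condition $n-|\nei(a)|-2r+1>0$ is exactly what makes $\E[W^{-r}]$ finite, and your Gamma-function estimates are valid for $r>2$.

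One caveat on your closing claim that your bound is ``far stronger'' than the announced one: this is not uniform over the stated hypothesis. As $r\uparrow(n-|\nei(a)|+1)/2$ your bound $\sigma'^2(|\nei(a)|+2r)/(n-|\nei(a)|+1-2r)$ blows up while $Lr|\nei(a)|n\var(X_a)$ stays bounded (and indeed the true $r$-th root of the moment does diverge there, so the lemma as literally stated is delicate at that boundary). The comparison you want is safe whenever $n-|\nei(a)|-2r+1\ge1$ — which holds in the paper's application, where $r=2v$ with $v=\lfloor n/8\rfloor$, $|\nei(a)|\le n/2$, and all quantities are integers — since then your bound is at most $\sigma'^2(|\nei(a)|+2r)\le Lr|\nei(a)|n\var(X_a)$ (for $|\nei(a)|\ge1$; the case $\nei(a)=\emptyset$ is trivial as both sides vanish). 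With that remark added, the proof is complete.
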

Since $v$ is smaller than $n/8$ and  since $|\nei(a)|$ is smaller than $n/2$, it
follows that for any model $\nei(a)\in\mathcal{N}_D(a)$, $n-|\nei(a)|-4v+1$ is
positive and
\begin{eqnarray*}
\mathbb{E}\cro{\|\Sigma^{1/2}(\widehat{\theta}_{\nei(a)}-\theta_a)\|^{4v}}^{
1/(2v)}     \leq \var(X_a)\left[1+ Ln^2v\right]
\leq  Ln^3\Sigma_{a,a}\,.
\end{eqnarray*} 
 Gathering this last upper bound with (\ref{majoration1}) and 
(\ref{majoration_holder}), we get that
\begin{eqnarray*}
 \mathbb{E}\cro{\|\Sigma^{1/2}(\thetat-\theta)\|^2_{p \times
p}\mathbf{1}_{\mathbb{B}^c}}&\leq&   Ln^3\text{tr}(\Sigma)
\sqrt{\mathbb{P}(\mathbb{B}^c)}.
\end{eqnarray*}

\subsubsection{Conclusion}
Finally, putting  together the bound on $
\mathbb{E}[\|\Sigma^{1/2}(\thetat-\theta)\|^2\mathbf{1}_{\mathbb{B}}]$, the
bound on  $
\mathbb{E}[\|\Sigma^{1/2}(\thetat-\theta)\|^2\mathbf{1}_{\mathbb{B}^c}]$, and
the bound $\P(\B^c)\leq 2pe^{-n(\sqrt{\gamma}-\gamma)^2/2}$, we obtain
$$\MSEP(\widetilde{\theta}) \leq  L_{K,\gamma}\log(p)\pa{
\E\cro{\inf_{G\in\widehat \G}\pa{\MSEP(\widehat{\theta}_G)}}\vee {\MSEP(I)\over
n}}
+R_n\,,$$
with $R_{n}\leq Ln^3\text{tr}(\Sigma)
e^{-n(\sqrt{\gamma}-\gamma)^2/4}$.

\subsection{Proof of Proposition \ref{corollaire_risque}}

The result is proved analogously except that we replace the event $\mathbb{B}$
by 
$$\mathbb{B}'= \mathbb{B}\cup \left\{
G_{\Sigma}\in\widehat{\mathcal{G}}\right\}\ .$$
Hence, the residual term now satisfies 
\begin{eqnarray*}
R_n &\leq &Ln^3\text{tr}(\Sigma) \sqrt{\mathbb{P}(\mathbb{B}^c)}\\ & \leq
&Ln^3\text{tr}(\Sigma) \left[e^{-n(\sqrt{\gamma}-\gamma)^2/4}+
\sqrt{\alpha}e^{-\frac{\beta}{2}n^\delta}\right]\ .
\end{eqnarray*}

\subsection{Proof of Theorem \ref{proposition_consistance}}
In this proof, the notations $o(1)$, $O(1)$ respectively refer to sequences that
converge to $0$ or stay bounded when $n$ goes to infinity. These sequences may
depend on $K$,  $s$, $s'$ but \emph{do not}  depend on $G_n$, on the covariance
$\Sigma$, or a particular subset $S\subset\Gamma$. The technical lemmas are
postponed to Section \ref{section_lemmas}. In the sequel, we omit the dependency
of $p$ and $\Sigma$ on $n$ for the sake of clarity.
First, observe that the result is trivial if $n/\log(p)^2<1$, because the
assumptions imply that $G_{\Sigma}$ is the empty graph whereas the family
$\widehat{\mathcal{G}}$ contains at most the empty graph. In the sequel, we
assume that  $n/\log(p)^2\geq 1$.\\

\noindent 
Let us set $D_\textrm{max}=n/\log(p)^2$. We shall prove that for some $L>0$, 
\begin{eqnarray}\label{equation_consistance_preuve1}
 \mathbb{P}\left(\crit(G_{\Sigma}) = \inf_{G',\ \deg(G')\leq D_\textrm{max}}
\crit(G')\right) \geq  1- Lp^{-1/2}\ ,
\end{eqnarray}
for $n$ larger than $n_0(K,s,s')$. Since $\widehat{G}$ minimizes the criterion
$\crit(.)$ on the family $\widehat{\mathcal{G}}$, this will imply the result of
the theorem.

In fact, we shall prove a slightly stronger result than
(\ref{equation_consistance_preuve1}). Let $a$ be a node in $\Gamma$ and let
$\nei(a)$ be a subset of $\Gamma\setminus\{a\}$. As defined in Section
\ref{section_risque_petit_evenement},  $\widehat{\theta}_{\nei(a)}$ is the
least-squares estimator of $\theta_a$ whose support is included in $\nei(a)$.
\begin{eqnarray*}
 \widehat{\theta}_{\nei(a)} = \arg\inf_{\theta'_a,\ \supp(\theta'_a)\subset
\nei(a)} \|{\bf X}_a-{\bf X}\theta'_a\|_n^2\ .
\end{eqnarray*}
If $G$ is a graph such that the neighborhood $\nei_G(a)$ equals $\nei(a)$, then
$\widehat{\theta}_{\nei(a)}=[\widehat{\theta}_G]_a$. We then define the partial
criterion $\crit(a,\nei(a))$ by 
\begin{eqnarray*}
\crit(a,\nei(a))=\|{\bf X}_a- {\bf
X}\widehat{\theta}_{\nei(a)}\|_n^2\left(1+\frac{\pen(|\nei(a)|)}{n-|\nei(a)|}
\right) \ .
\end{eqnarray*}

Observe that for any  graph $G$, $\crit(G)=\sum_{a=1}^p\crit(a,\nei_{G}(a))$. We
note $\widehat{\nei}(a)$ the set that minimizes the criterion $\crit(a,.)$ among
all subsets of size smaller than $D_\textrm{max}$.
\begin{eqnarray*}
 \widehat{\nei}(a) = \arg \inf_{\nei(a)\in \mathcal{N}_{D_\textrm{max}}(a)}
\crit(a,\nei(a))\ .
\end{eqnarray*}
If for all nodes $a\in\Gamma$, the selected set $\widehat{\nei}(a)$ equals
$\nei_{G_{\Sigma}}(a)$, then $G_{\Sigma}$ minimizes the criterion $\crit(.)$
over all graphs of degree smaller than $D_\textrm{max}$. Consequently, the
property (\ref{equation_consistance_preuve1}) is satisfied if for any node
$a\in\Gamma$, it holds that
\begin{eqnarray}\label{equation_consistance_preuve2}
 \mathbb{P}\left[\widehat{\nei}(a) = \nei_{G_{\Sigma}}(a)\right]\geq 1-
7p_{n}^{-3/2}\ ,
\end{eqnarray}
for $n$ larger than some $n_0[K,s,s']$.\\

\noindent
Let us fix some node $a\in\Gamma$. We prove the lower bound
(\ref{equation_consistance_preuve2}) in two steps:

\begin{enumerate}
 \item With high probability, the estimated neighborhood $\widehat{\nei}(a)$
does not strictly contain the true one $\nei_{G_{\Sigma}}(a)$.
\begin{eqnarray}\label{equation_consistance_preuve3}
 \mathbb{P}\left[\widehat{\nei}(a) \varsupsetneq \nei_{G_{\Sigma}}(a)
\right]\leq p_{n}^{-3/2}\ ,
\end{eqnarray}
for $n$ larger than some $n_0[K,s,s']$.

\item With high probability, the estimated neighborhood $\widehat{\nei}(a)$
contains the true one $\nei_{G_{\Sigma}}(a)$.
\begin{eqnarray}\label{equation_consistance_preuve4}
 \mathbb{P}\left[\widehat{\nei}(a) \nsupseteq  \nei_{G_{\Sigma}}(a) \right]\leq
6p_{n}^{-3/2}\ ,
\end{eqnarray}
for $n$ larger than some $n_0[K,s,s']$.
\end{enumerate}

\noindent
The remaining part of the proof is deserved to
(\ref{equation_consistance_preuve3}) and (\ref{equation_consistance_preuve4}).\\

\noindent
Let us recall some notations and let us introduce some other ones. The component
$X_a$ decomposes as 
\begin{eqnarray*}
 X_a= X\theta_a+\epsilon_a\ ,
\end{eqnarray*}
where $\epsilon_a$ follows a centered normal distribution with variance
$\Omega_{a,a}^{-1}=\var(X_a|X_{-a})$. The variables $\epsilon_a$ are independent
of $X_{-a}$.
Given a set $S\subset \Gamma$, $\Pi_S$ stands for the projection of
$\mathbb{R}^n$ into the space generated by $({\bf X}_a)_{a\in S}$, whereas
$\Pi_S^{\perp}$ denotes the projection along the space generated by $({\bf
X}_a)_{a\in S}$. The notation $\langle.,.\rangle_n$  refers to the empirical
inner product associated with the norm $\|.\|_n$.
For any neighborhood $\nei(a)\subset\Gamma\setminus\{a\}$ such that
$|\nei(a)|\leq D_\textrm{max}$, let us define
$\Delta(\nei(a),\nei_{G_{\Sigma}}(a))$ by
\begin{eqnarray*}
 \Delta(\nei(a),\nei_{G_{\Sigma}}(a))= \crit(a,\nei(a)) -
\crit(a,\nei_{G_{\Sigma}}(a))\ .
\end{eqnarray*}

\subsubsection{Bound on  $\mathbb{P}\left(\widehat{\nei}(a) \varsupsetneq
\nei_{G_{\Sigma}}(a)  \right)$}

We shall upper bound the probability that $\Delta(\nei(a),\nei_{G_{\Sigma}}(a))$
is negative for at least one of the neighborhoods
$\nei(a)\in\mathcal{N}_{D_{\textrm{max}}}(a)$ such that $\nei(a)$ strictly
contains $\nei_{G_{\Sigma}}(a)$. For such a set $\nei(a)$, 
$\Delta(\nei(a),\nei_{G_{\Sigma}}(a))$ decomposes as (see e.g. Lemma 7.1 in \cite{Verzelen08}).
\begin{eqnarray*}
\lefteqn{ \Delta(\nei(a),\nei_{G_{\Sigma}}(a))  }\\ & = &\|\Pi_{\nei(a)}^{\perp}
\boldsymbol{\epsilon}_a\|_n^2\left[1+\frac{\pen(|\nei(a)|)}{n-|\nei(a)|}\right]-
\|\Pi_{\nei_{G_{\Sigma}}(a)}^{\perp}
\boldsymbol{\epsilon}_a\|_n^2\left[1+\frac{\pen(|\nei_{G_{\Sigma}}(a)|)}{
n-|\nei_{G_{\Sigma}}(a)|}\right]\\
&= & -\|\Pi_{\nei_{G_{\Sigma}}(a)^{\perp}\cap \nei(a)
}\boldsymbol{\epsilon}_a\|_n^2\left[1+\frac{\pen(|\nei_{G_{\Sigma}}(a)|)}{
n-|\nei_{G_{\Sigma}}(a)|}\right]\\ & & \mbox{}+\|\Pi_{\nei(a)}^{\perp}
\boldsymbol{\epsilon}_a\|_n^2\left[\frac{\pen(|\nei(a)|)}{n-|\nei(a)|}-\frac{
\pen(|\nei_{G_{\Sigma}}(a)|)}{n-|\nei_{G_{\Sigma}}(a)|}\right]\ .
\end{eqnarray*}
Hence, $\Delta(m,\nei_{G_{\Sigma}}(a))>0$  if 
\begin{eqnarray}\label{minoration_importante_consistance}
\lefteqn{\frac{\|\Pi_{\nei_{G_{\Sigma}}(a)^{\perp}\cap \nei(a)
}\boldsymbol{\epsilon}_a\|_n^2/(|\nei(a)\setminus\nei_{G_{\Sigma}}(a)|)}{\|\Pi_{
\nei(a)}^{\perp} \boldsymbol{\epsilon}_a\|_n^2/(n-|\nei(a)|)}} & &\nonumber \\ &
& <\,
\frac{\pen(|\nei(a)|)-\pen(|\nei_{G_{\Sigma}}(a)|)}{|\nei(a)\setminus\nei_{G_{
\Sigma}}(a)|}\left[1+\frac{\pen(|\nei_{G_{\Sigma}}(a)|)}{n-|\nei_{G_{\Sigma}}
(a)|}\right]^{-1}\ .
\end{eqnarray}

To conclude, it remains to prove that the bound
(\ref{minoration_importante_consistance}) holds with high probability. Let us
call $A_1$ the right expression of (\ref{minoration_importante_consistance}) and
let us derive a lower bound of $A_1$. Afterwards, we shall upper bound with high
probability the left expression of
(\ref{minoration_importante_consistance}).\\

{\bf Upper bound of $A_1$.} We first upper bound the penalty function.

\begin{lemma}\label{lemme_penalite_general}
Let $d_1\geq d_2$ be two positive integers such that $d_1\leq e^{-2}(p-1)$. We
have
\begin{eqnarray}\label{minoration_difference_penalite}
 \pen(d_1)-\pen(d_2)\geq 2K(d_1-d_2) \log\left(\frac{p-d_1}{d_1}\right)\ .
\end{eqnarray}
\end{lemma}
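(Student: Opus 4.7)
The plan is to unfold $\pen$ using a sharp lower bound on the EDKhi function that extracts a term proportional to $\log(1/\epsilon)$ out of $\text{EDKhi}[D,N,\epsilon]$, after which the binomial factor $\binom{p-1}{d}$ produces the desired logarithmic increment by elementary monotonicity.

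Concretely, since $(n-d)/(n-d-1) \geq 1$, it suffices to lower bound $K\cdot \text{EDKhi}[d+1, n-d-1, \epsilon_d]$ with $\epsilon_d=(\binom{p-1}{d}(d+1)^2)^{-1}$. Using the lower bound $\text{EDKhi}[D,N,\epsilon] \geq D + 2\log(1/\epsilon)$, which is a direct consequence of the Laurent--Massart-type tail estimate for Fisher/chi-square random variables that underlies Section~6.1 of Baraud \emph{et al.}, one obtains
\begin{equation*}
\pen(d) \geq K(d+1) + 2K\log\binom{p-1}{d} + 4K\log(d+1).
\end{equation*}

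For $d_2 \leq d_1$, the binomial ratio factorizes as $\binom{p-1}{d_1}/\binom{p-1}{d_2} = \prod_{k=d_2}^{d_1-1}(p-1-k)/(k+1)$, and each factor is bounded below by $(p-d_1)/d_1$ since $k+1 \leq d_1$ and $p-1-k \geq p-d_1$ for $k \leq d_1-1$. Taking the difference and dropping the nonnegative contributions $K(d_1-d_2)$ and $4K\log\frac{d_1+1}{d_2+1}$ then gives exactly the claim
\begin{equation*}
\pen(d_1) - \pen(d_2) \geq 2K(d_1-d_2)\log\frac{p-d_1}{d_1}.
\end{equation*}

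The main obstacle is justifying the constant $2$ in the EDKhi lower bound, as a weaker coefficient would fail to produce the $2K$ factor in the conclusion; this requires either citing the precise tail estimate from Baraud \emph{et al.}\ or checking it directly through the definition of DKhi and the Laurent--Massart inequality. The hypothesis $d_1 \leq e^{-3/2}(p-1)$ appears to serve only to guarantee $\log\frac{p-d_1}{d_1}$ is positive (in fact bounded below by $\log(e^{3/2}-1) > 1$), ensuring the stated inequality is nontrivial; this margin would also absorb any additive constant left over from a less sharp version of the EDKhi bound.
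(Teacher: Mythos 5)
Your argument has a genuine logical gap at the step ``taking the difference.'' You establish only a \emph{lower} bound $\pen(d)\geq \mathrm{LB}(d):=K(d+1)+2K\log\binom{p-1}{d}+4K\log(d+1)$, and then conclude $\pen(d_1)-\pen(d_2)\geq \mathrm{LB}(d_1)-\mathrm{LB}(d_2)$. That inference is invalid: to lower bound a difference you need a lower bound on $\pen(d_1)$ \emph{and an upper bound on} $\pen(d_2)$, and you supply no upper bound. Since $\mathrm{LB}$ is only one-sided, $\pen(d_2)$ could a priori exceed $\mathrm{LB}(d_2)$ by more than the whole target increment $2K(d_1-d_2)\log\frac{p-d_1}{d_1}$ (which, for $d_1=d_2+1$, is only about $2K\log p$, i.e.\ of the same order as the slack in any crude bound on $\pen$). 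Moreover, the ingredient $\EDKhi[D,N,\epsilon]\geq D+2\log(1/\epsilon)$ that you flag as the ``main obstacle'' is indeed not available: it fails for $\epsilon$ near $1$ (where $\DKhi(D,N,D)=O(1/\sqrt{D})<1$), and the bound one actually gets from the Jensen argument in the paper's Lemma~\ref{lemme_minoration_penalite_seul} is the weaker $\EDKhi(d,N,e^{-L})\geq 2L-2\log d$, which is only used there to certify $\EDKhi\geq d+1$ under the hypothesis $d\leq e^{-3/2}(p-1)$.

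The paper avoids both problems by bounding the difference of the two $\EDKhi$ values \emph{directly}, never passing through separate closed-form bounds on each penalty. It first uses monotonicity of $\EDKhi(d,N,\cdot)$ in $d$ and $N$ (Lemma~\ref{lemme_penalite1}) to replace $\EDKhi(d_1+1,n-d_1-1,e^{-L_1})$ by $\EDKhi(d_2+1,n-d_2-1,e^{-L_1})$, so that both quantiles live in the same Fisher-type family; it then uses the ratio bound $\DKhi(d_2+1,N,x_1)/\DKhi(d_2+1,N,x_2)\geq e^{-(x_1-x_2)/2}$, valid for $x_2\geq d_2+1$ (Lemma~\ref{lemme_penalite2}), to convert the multiplicative gap $e^{-(L_1-L_2)}$ between the two tail levels into the additive gap $x_1-x_2\geq 2(L_1-L_2)$ between the quantiles. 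The hypothesis $d_1\leq e^{-3/2}(p-1)$ is what guarantees, via Lemma~\ref{lemme_minoration_penalite_seul}, that $x_2\geq d_2+1$ so this ratio lemma applies --- it is not merely there to make $\log\frac{p-d_1}{d_1}$ positive. Your final step, bounding $\log\left(\binom{p-1}{d_1}/\binom{p-1}{d_2}\right)\geq (d_1-d_2)\log\frac{p-d_1}{d_1}$ by factorizing the binomial ratio, is correct and is the same computation the paper uses; but the preceding reduction to that computation is where your proof breaks down.
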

 
\noindent
A proof of this lemma is provided in Section \ref{section_lemmas}.
By Proposition 4 in \cite{BGH09}, the penalty $\pen(|\nei_{G_{\Sigma}}(a)|)$
satisfies
$$\pen\left(|\nei_{G_{\Sigma}}(a)|\right)\leq
LK\frac{|\nei_{G_{\Sigma}}(a)|}{n}\log\left(\frac{p-1}{|\nei_{G_{\Sigma}}(a)|}
\right)\ ,$$    
where $L$ is some numerical constant. This last term converges towards $0$ as
$n$ goes to infinity since $|\nei_{G_{\Sigma}}(a)|\leq
(n^s/\log(p))\wedge(n/\log(p)^2)$ (Assumption 2). Gathering this upper bound
with Lemma \ref{lemme_penalite_general}, we get
\begin{eqnarray}
A_1\geq 
2K\frac{\log\left(\frac{p-|\nei(a)|}{|\nei(a)|}\right)}{1+\frac{\pen(|\nei_{G_{
\Sigma}}(a)|)}{n-|\nei_{G_{\Sigma}}(a)|}}\geq
2K\log\left(\frac{p}{|\nei(a)|}\right)\left(1-o(1)\right)\ .
\label{mino1_consistance}
\end{eqnarray}
\medskip

\noindent
{\bf Lower bound of the left part of (\ref{minoration_importante_consistance})}.
The random variables involved in this expression  follow a 
 Fisher distribution with $|\nei(a)\setminus\nei_{G_{\Sigma}}(a)|$ and
$n-|\nei(a)|$ degrees of freedom. To conclude, we only need to compare the
quantile of such a variable with the bound (\ref{mino1_consistance}).
Let $u\in (0,1)$ and let $F^{-1}_{D,N}(u)$ denote
the $1-u$ quantile of a Fisher random variable with $D$ and $N$ degrees of
freedom. By Lemma 1 in \cite{Baraud03}, it holds that 
\begin{eqnarray*}
DF^{-1}_{D,N}(u)&\leq &D+2
\sqrt{D\left(1+2\frac{D}{N}\right)\log\left(\frac{1}{u}\right)}\\&
+&\left(1+2\frac{D}{N}\right)\frac{N}{2}\left[\exp\left(\frac{4}{N}
\log\left(\frac{1}{u}\right)\right)-1\right]\ .
\end{eqnarray*}
Let us set $u$ to 
\begin{eqnarray*}
 u= \left\{p^{3/2}e^{|\nei(a)\setminus
\nei_{G_{\Sigma}}(a)|}\binom{p-|\nei_{G_{\Sigma}}(a)|-1}{|\nei(a)\setminus
\nei_{G_{\Sigma}}(a)|}\right\}^{-1}\ .
\end{eqnarray*}

\noindent 
Since we consider the case $n/\log(p)^2 \geq 1$ and $p\geq n$, the term
$4/(n-|\nei(a)|)\log(1/u)$ goes to $0$ with $n$ (uniformly w.r.t. $\nei(a)$).
\begin{eqnarray*}
A_2= F^{-1}_{|\nei(a)\setminus\nei_{G_{\Sigma}}(a)|,n-|\nei(a)|}(u) &\leq& 1+ 2 
\sqrt{\frac{1}{|\nei(a)\setminus\nei_{G_{\Sigma}}(a)|}
\left(1+o(1)\right)\log\left(\frac{1}{u}\right)}\\
&
&+\frac{2}{|\nei(a)\setminus\nei_{G_{\Sigma}}(a)|}
\left(1+o(1)\right)\log\left(\frac{1}{u}\right)\ .
\end{eqnarray*} 
The term $\log(1/u)/|\nei(a)\setminus\nei_{G_{\Sigma}}(a)|$ goes to infinity
with $n$ (uniformly w.r.t. $\nei(a)$). Hence, we get
\begin{eqnarray}
A_2 &\leq&
1+\frac{2}{|\nei(a)\setminus\nei_{G_{\Sigma}}(a)|}\log\left(\frac{1}{u}
\right)\left(1+ o(1)\right)\nonumber\ .
\end{eqnarray}
Applying the classical inequality $\log\binom{l}{k}\leq k\log(el/k)$, we obtain
\begin{eqnarray}
A_2 &\leq & \left[3\frac{\log(p)}{|\nei(a)\setminus \nei_{G_{\Sigma}}(a)|}+
2\log\left(\frac{p}{|\nei(a) 
\setminus\nei_{G_{\Sigma}}(a)|}\right)\right]\left(1+ o(1)\right)\nonumber\\
& \leq &
5\log\left(\frac{p}{|\nei(a)\setminus\nei_{G_{\Sigma}}(a)|}
\right)\left(1+o(1)\right)\label{majo1consistance}\ .
\end{eqnarray}
\medskip

\noindent 
{\bf Conclusion}.
Let us  compare the lower bound (\ref{mino1_consistance}) of $A_1$ with the
upper bound (\ref{majo1consistance}) of $A_2$.
\begin{itemize}
 \item Let us first assume that $|\nei(a)|\leq 2|\nei_{G_{\Sigma}}(a)|$. Then,
we have
$$A_1\geq 
2K\log\left(\frac{p}{|\nei_{G_{\Sigma}}(a)|}\right)\left(1-o(1)\right)\geq
2K(1-s)\log(p)\left(1-o(1)\right)\ ,$$
since $|\nei_{G_{\Sigma}}(a)|\leq n^s/\log(p) \leq p^s$. In particular, 
$$ A_2\leq
5\log\left(\frac{p}{|\nei(a)\setminus\nei_{G_{\Sigma}}(a)|}
\right)\left(1+o(1)\right)< A_1 ,$$ for $n$ large enough since we assume that
$2K(1-s)>5$.
\item If $|\nei(a)|> 2|\nei_{G_{\Sigma}}(a)|$, we also have 
$$ A_2\leq 5\log\left(\frac{p}{|\nei(a)|}\right)\left(1+o(1)\right)< A_1\ ,$$
for $n$ large enough since we assume that $2K>5$.
\end{itemize}

\noindent 
It follows from Ineq. (\ref{minoration_importante_consistance}) and the
definition of $A_1$ and $A_2$ that
\begin{eqnarray*}
 \mathbb{P} \left[\Delta(\nei(a),\nei_{G_{\Sigma}}(a))<0 \right]\leq
\left\{p^{3/2}e^{|\nei(a)\setminus
\nei_{G_{\Sigma}}(a)|}\binom{p-|\nei_{G_{\Sigma}}(a)|}{|\nei(a)\setminus
\nei_{G_{\Sigma}}(a)|}\right\}^{-1}\ ,
\end{eqnarray*}
for $n$ larger than some positive constant that may depend on $K$, $s$, but does
\emph{not} depend on $\nei(a)$.
Applying this bound to any neighborhood $\nei(a)$ that strictly contains
$\nei_{G_{\Sigma}}(a)$ yields Statement (\ref{equation_consistance_preuve3}):
\begin{eqnarray*}
\mathbb{P}\left[\widehat{\nei}(a) \varsupsetneq \nei_{G_{\Sigma}}(a) 
\right]\leq p^{-3/2}\ ,
\end{eqnarray*}
for $n$ large enough.

\subsubsection{Bound on  $\mathbb{P}\left(\widehat{\nei}(a) \nsupseteq
\nei_{G_{\Sigma}}(a)  \right)$}

Again, we shall prove that $\Delta[\nei(a),\nei_{G_{\Sigma}}(a)]$ is positive
for $\nei(a)\nsupseteq \nei_{G_\Sigma}(a)$ with overwhelming probability. We
recall that $\theta_{\nei(a)}$ is the vector in $\mathbb{R}^{p}$ such that
$\Sigma^{1/2}\theta_{\nei(a)}$ is the orthogonal projection of
$\Sigma^{1/2}\theta_{a}$ onto the linear span $\left\{\Sigma^{1/2}\beta\ :
\supp(\beta)\subset\nei(a)\right\}$. Moreover,
$\|\Sigma^{1/2}(\theta_a-\theta_{\nei(a)})\|^2=\var(X_a|X_{\nei(a)})-\var(X_a|X_
{-a})$ (see e.g. Lemma 7.1 in \cite{Verzelen08}). 

Then,  $\Delta(\nei(a),\nei_{G_{\Sigma}}(a))$ decomposes as 
\begin{eqnarray*}
\lefteqn{\Delta(\nei(a),\nei_{G_{\Sigma}}(a))   =  \left\|\Pi_{\nei(a)}^{\perp}
\left[\boldsymbol{\epsilon}_a+{\bf  X
}(\theta_a-\theta_{\nei(a)})\right]\right\|_n^2\left[1+\frac{\pen(|\nei(a)|)}{
n-|\nei(a)|}\right]} \\& & \mbox{}- \left\|\Pi_{\nei_{G_{\Sigma}}(a)}^{\perp} 
\boldsymbol{\epsilon}_a\right\|_n^2\left[1+\frac{\pen(|\nei_{G_{\Sigma}}(a)|)}{
n-|\nei_{G_{\Sigma}}(a)|}\right]\ .\hspace{3cm}
\end{eqnarray*}
Let $\kappa=6/7$ and let us define $$E_{\nei(a)}= \kappa^{-1}\left\langle
\frac{\Pi_{\nei(a)}^{\perp}{\bf  X
}(\theta-\theta_{\nei(a)})}{\|\Pi_{\nei(a)}^{\perp}{\bf  X
}(\theta-\theta_{\nei(a)})\|_n},\Pi_{\nei(a)}^{\perp} \boldsymbol{\epsilon}_a
\right\rangle_n^2+ \|\Pi_{\nei(a)} \boldsymbol{\epsilon}_a\|_n^2\ .$$ 
We recall that $\langle.,.\rangle_n$ is the inner product associated to the norm
$\|.\|_n$.
The quantity $\Delta(\nei(a),\nei_{G_{\Sigma}}(a))$ is positive  if  
\begin{eqnarray}
\lefteqn{(1-\kappa)\|\Pi_{\nei(a)}^{\perp} {\bf  X
}(\theta-\theta_{\nei(a)})\|_n^2  >
E_{\nei(a)}\left[1+\frac{\pen(|\nei(a)|)}{n-|\nei(a)|}\right]}\nonumber\\
& + &
\|\boldsymbol{\epsilon}_a\|_n^2\left[\frac{\pen(|\nei_{G_{\Sigma}}(a)|)}{
n-|\nei_{G_{\Sigma}}(a)|}- \frac{\pen(|\nei(a)|)}{n-|\nei(a)|} \right]\
.\label{minoration_importante_deuxieme_cas}
\end{eqnarray}
We respectively call $A_3$ and $A_4$ the right and the left terms of the
inequality. We shall control their deviations  in
order to prove that (\ref{minoration_importante_deuxieme_cas}) holds with high
probability.\\

\noindent
{\bf Upper Bound of $A_3$}.
On an event $\mathbb{A}$ of probability larger than $1- 2p^{-3/2}$, the random
variable $\|\boldsymbol{\epsilon}_a\|_n^2$ satisfies (see Lemma 1 in
\cite{Laurent00}).
$$1- 2\sqrt{\frac{3\log(p)}{2n}}\leq
\frac{\|\boldsymbol{\epsilon}_a\|_n^2}{\var(X_a|X_{-a})}\leq 1+
2\sqrt{\frac{3\log (p)}{2n}}+3\frac{\log (p)}{n}\ .$$

Let us bound the other random variables involved in
(\ref{minoration_importante_deuxieme_cas}). As explained in 
the proof of Th.3.1  in \cite{Verzelen08}, the random variables
$\|\Pi_{\nei(a)}^{\perp} {\bf  X }(\theta-\theta_{\nei(a)})\|_n^2$ and
$E_{\nei(a)}$ follow distributions of linear combinations of $\chi^2$ random
variables. 
We apply again Lemma 1 in \cite{Laurent00} . On a event $\mathbb{A}_{\nei(a)}$
of  probability larger than $1-
2p^{-3/2}e^{-|\nei(a)|}\binom{p-1}{|\nei(a)|}^{-1}$, it holds that 
\begin{eqnarray*}
\lefteqn{\frac{\|\Pi_{\nei(a)}^{\perp} {\bf  X
}(\theta-\theta_{\nei(a)})\|_n^2}{\var(X_a|X_{\nei(a)})-\var(X_a|X_{-a})} \geq 1
- \frac{|\nei(a)|}{n}}\hspace{4cm}\\  & & \mbox{}- 2\sqrt{\frac{\frac{3}{2}\log
(p) + |\nei(a)|\left[2+\log\left(p-1\right)\right]}{n} }
\end{eqnarray*}
and 
\begin{eqnarray*}
\lefteqn{\frac{E_{\nei(a)}}{\var(X_a|X_{-a})} \leq
\frac{|\nei(a)|+\kappa^{-1}}{n}}\\  & & \mbox{}+       
\frac{2}{n}\sqrt{(|\nei(a)|+\kappa^{-2})\left[|\nei(a)|\left(2+\log\left(\frac{
p-1}{|\nei(a)|}\right)\right)+\frac{3}{2}\log(p)\right]}\\ & &\mbox{}
+\frac{2\kappa^{-1}}{n}\left[|\nei(a)|\left(2+\log\left(\frac{p-1}{|\nei(a)|}
\right)\right)+\frac{3}{2}\log(p)\right]\ .
\end{eqnarray*} 
We derive that
\begin{eqnarray*}
\frac{E_{\nei(a)}}{\var(X_a|X_{-a})} &\leq &   
\frac{2\kappa^{-1}}{n}\left[|\nei(a)|\log\left(\frac{p-1}{|\nei(a)|}
\right)+\frac{3}{2}\log(p)\right]\left(1+o(1)\right)\\
&+&\frac{\sqrt{6|\nei(a)|\log(p)}}{n}+\frac{\kappa^{-1}}{n}\ .
\end{eqnarray*}

\begin{itemize}
 \item {\bf CASE 1}: {\bf $\nei(a)$ is non empty}.
\begin{eqnarray*}
 \frac{E_{\nei(a)}}{\var(X_a|X_{-a})} \leq
\kappa^{-1}\frac{2|\nei(a)|\log\left(\frac{p-1}{|\nei(a)|}\right)+3\log(p)}{n}
(1+o(1))\ . 
\end{eqnarray*}
Let us upper bound the terms involving $\pen(|\nei(a)|)$ in
(\ref{minoration_importante_deuxieme_cas}) on the event $\mathbb{A}\cap
\mathbb{A}_{\nei(a)}$.
\begin{eqnarray*}
\lefteqn{\left\{E_{\nei(a)}\left[1+\frac{\pen(|\nei(a)|)}{n-|\nei(a)|}\right]-
\|\boldsymbol{\epsilon}_a\|_n^2\frac{\pen(|\nei(a)|)}{n-|\nei(a)|}\right\}
/\var(X_a|X_{-a})}\hspace{2cm}\\ & \leq & 
\frac{\kappa^{-1}}{n}\left(2|\nei(a)|\log\left(\frac{p-1}{|\nei(a)|}
\right)+3\log(p)\right)(1+o(1))\\ & & \mbox{}-
\frac{2K}{n}|\nei(a)|\log\left(\frac{p-1}{|\nei(a)|}\right)(1+o(1))\ .
\end{eqnarray*} 
This last quantity is negative for $n$ large enough since $K\geq 3$. 

\item   {\bf CASE 2}: {\bf $\nei(a)$ is empty}. We get the upper bound
\begin{eqnarray*}
\lefteqn{E_{\nei(a)}\left[1+\frac{\pen(|\nei(a)|)}{n-|\nei(a)|}\right]-
\|\boldsymbol{\epsilon}_a\|_n^2\frac{\pen(|\nei(a)|)}{n-|\nei(a)|}}\hspace{3cm}
\\ & \leq & \frac{\kappa^{-1}+ 3\log (p)}{n}\var(X_a|X_{-a})\\ &\leq& 
(3+\kappa^{-1})n^{s-1}\var(X_a|X_{-a})\ .
\end{eqnarray*}
 
\noindent
Indeed, $\log(p)$ has to be  smaller than $n^s$. If this is not the case, then
$\nei_{G_{\Sigma}}(a)$ should be empty and $\nei(a)$ cannot satisfy
$\nei_{G_{\Sigma}}(a)\nsubseteq \nei(a)$.
\end{itemize}

\noindent
We conclude that  on the event $\mathbb{A}\cap\mathbb{A}_{\nei(a)}$,
\begin{eqnarray*}
 A_3\leq  (3+\kappa^{-1})n^{s-1}\var(X_a|X_{-a})+
\|\boldsymbol{\epsilon}_a\|_n^2\frac{\pen(|\nei_{G_{\Sigma}}(a)|)}{n-|\nei_{G_{
\Sigma}}(a)|}\ ,
\end{eqnarray*}
for $n$ large enough.
 Let us upper bound the penalty term as done in the upper bound of $A_1$.
$$\pen(\nei_{G_{\Sigma}}(a))\leq
LK\frac{|\nei_{G_{\Sigma}}(a)|}{n}\log\left(\frac{p-1}{|\nei_{G_{\Sigma}}(a)|}
\right)\ .$$
 Since  $|\nei_{G_{\Sigma}}(a)|$ is assumed to be smaller than
$\frac{n^s}{\log(p)}$, the term $A_3$ is  upper bounded as follows
\begin{eqnarray}\label{borne_c}
A_3\leq  (K+1)n^{s-1}\var(X_a|X_{-a})O(1)\ .
\end{eqnarray}
for $n$ large enough.\\

\noindent 
{\bf Lower Bound of $A_4$}.
 Let us lower bound the left term  $A_4$ in
(\ref{minoration_importante_deuxieme_cas}) on the event
$\mathbb{A}\cap\mathbb{A}_{\nei(a)}$.
\begin{eqnarray*}
A_4 &\geq &
(1-o(1))(1-\kappa)\left[\var(X_a|X_{\nei(A)})-\var(X_a|X_{-a})\right]\\ & \geq &
(1-o(1))(1-\kappa)\min_{b\in\Gamma\setminus\{a\}}\left(\theta_{a,b}
\right)^2\min_{b,c\in\Gamma\setminus\{a\}}\frac{\var(X_b|X_{-b})}{\var(X_c|X_{-c
})}\var(X_a|X_{-a})\\
&\geq & (1-\kappa)(1-o(1))n^{s'-1}\var(X_a|X_{-a})\ .
\end{eqnarray*}

Thanks to the last bound and  (\ref{borne_c}) and since $s'$ is larger than $s$,
$A_3< A_4$ on the event $\mathbb{A}\cap\mathbb{A}_{\nei(a)}$ and for $n$ large
enough (not depending on $\nei(a)$). Hence, for $n$ large enough the inequality
(\ref{minoration_importante_deuxieme_cas}) holds  simultaneously for all
neighborhoods $\nei(a)$ such that $\nei_{G_{\Sigma}}(a)\nsubseteq \nei(a)$ with
probability larger than $1-2 p^{-3/2}- 2(e/(e-1))p^{-3/2}$. We conclude that
\begin{eqnarray*}
\mathbb{P}\left(\widehat{\nei}(a) \nsupseteq \nei_{G_{\Sigma}}(a)  \right)\leq
6p^{-3/2}\ ,
\end{eqnarray*}
for $n$ large enough.

\subsection{Lemmas}\label{section_lemmas}

Let us prove the following lemmas.

\begin{lemma}\label{lemme_minoration_penalite_seul}
For any positive integer $d\leq e^{-2}(p-1)$, 
$$\EDKhi\left[d+1,n-d-1,\left[\binom{p-1}{d}(d+1)^2\right]^{-1}\right] \geq d+1\
.$$
\end{lemma}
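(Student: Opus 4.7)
By the paper's definition, $\EDKhi[d+1, n-d-1, \cdot]$ is the inverse of the decreasing function $x \mapsto \DKhi(d+1, n-d-1, x)$, so the claim $\EDKhi[d+1, n-d-1, u] \geq d+1$ with $u = [\binom{p-1}{d}(d+1)^2]^{-1}$ is equivalent to
$$\DKhi(d+1, n-d-1, d+1) \;\geq\; \frac{1}{\binom{p-1}{d}(d+1)^2}.$$
Instantiating the definition of $\DKhi$ at $(d+1, n-d-1, d+1)$, this becomes
$$\mathbb{P}\!\left(F_{d+3,\,n-d-1} \geq \frac{d+1}{d+3}\right) - \mathbb{P}\!\left(F_{d+1,\,n-d+1} \geq \frac{n-d+1}{n-d-1}\right) \;\geq\; \frac{1}{\binom{p-1}{d}(d+1)^2}.$$
My first observation is that $(d+1)/(d+3) < 1 < (n-d+1)/(n-d-1)$, so both probabilities straddle the ``typical'' value $1$ of a Fisher variable and the difference is at least non-negative, which is a good sign.

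To get a quantitative lower bound I would represent each Fisher variable as a ratio $(U/a)/(V/b)$ of independent $\chi^2$ variables and rewrite the two events as half-planes in $(U, V)$ coordinates. The difference of the two probabilities can then be written as the probability of a thin wedge separating the two half-planes; this wedge is non-empty precisely because the thresholds bracket $1$, and its probability can be controlled by integrating the joint density of two independent $\chi^2$'s over an explicit region. Concentration inequalities for $\chi^2$ variables (of Laurent--Massart type, which are already used in the proof of Theorem~\ref{proposition_consistance}) then give a lower bound on this wedge probability in terms of the gap $\tfrac{n-d+1}{n-d-1} - \tfrac{d+1}{d+3}$, which in turn dominates the target $[\binom{p-1}{d}(d+1)^2]^{-1}$ under the hypothesis $d \leq e^{-3/2}(p-1)$ (which ensures $\binom{p-1}{d}$ grows only polynomially fast enough in $p$ to be beaten by the constant-order wedge).

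A more direct route, if one prefers to stay closer to the Baraud--Giraud--Huet machinery, is to invoke the representation of $\DKhi$ established in~\cite{BGH09} as an expectation associated with a single linear combination of independent $\chi^2$ variables (the DKhi function was engineered precisely so that $\DKhi(d, N, x) = \mathbb{P}(U > x V/N)$ type identities hold, after an integration by parts). In that formulation the lemma reduces to a one-dimensional chi-squared tail bound for a specific affine functional, which is then a routine application of Chernoff's inequality, and the condition $d \leq e^{-3/2}(p-1)$ is exactly what one needs for the exponent to have the right sign.

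\textbf{Main obstacle.} The delicate regime is when $d$ is a substantial fraction of $n$: both thresholds $(d+1)/(d+3)$ and $(n-d+1)/(n-d-1)$ are then close to $1$, both Fisher probabilities sit near $1/2$, and their difference shrinks. In that regime the crude union/Chernoff bound on each probability separately is useless, and one must instead couple the two Fisher variables on a common probability space (for instance through the same pair $(U_{d+1}, V_{n-d-1})$ of $\chi^2$'s, extended by two additional independent $\chi^2(2)$ summands) and bound the \emph{signed} difference directly. Getting the constant right here, so that the chi-squared wedge probability dominates the combinatorial factor $\binom{p-1}{d}(d+1)^2$ throughout the whole range $1 \leq d \leq e^{-3/2}(p-1)$, is where the bulk of the technical work lies.
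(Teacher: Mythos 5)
Your reduction is correct: since $x\mapsto\DKhi(d+1,n-d-1,x)$ is decreasing, the claim is equivalent to $\DKhi(d+1,n-d-1,d+1)\geq \left[\binom{p-1}{d}(d+1)^2\right]^{-1}$, and your instantiation of the two Fisher probabilities is also right. But what you have written is a plan, not a proof, and it misses the one idea that makes the paper's argument a three-line computation. The paper starts from the identity $d\cdot\DKhi(d,N,x)=\E\left[(X_d-x X'_N/N)_+\right]$ with $X_d,X'_N$ independent $\chi^2$ variables, and applies \emph{Jensen's inequality} conditionally on $X_d$: since $t\mapsto (X_d-xt)_+$ is convex and $\E[X'_N/N]=1$, one gets $\E\left[(X_d-xX'_N/N)_+\right]\geq\E\left[(X_d-x)_+\right]$, which by stochastic monotonicity in $d$ is at least $\E\left[(X_2-x)_+\right]=2e^{-x/2}$. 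Inverting $\DKhi(d,N,x)\geq (2/d)e^{-x/2}$ at $x=\EDKhi(d,N,e^{-L})$ yields $\EDKhi(d,N,e^{-L})\geq 2L-2\log d$, and the hypothesis $d\leq e^{-3/2}(p-1)$ enters only at the very last step, to guarantee $2\log\binom{p-1}{d}\geq 2d\log\frac{p-1}{ed}\geq d$. Note that this lower bound is uniform in $N$: the ``delicate regime where $d$ is a substantial fraction of $n$'' that you single out as the main obstacle simply does not arise, because the relevant comparison is between $d$ and $p$, not $d$ and $n$.

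Two concrete defects remain in your two proposed routes. First, the ``more direct route'' via a ``routine application of Chernoff's inequality'' cannot work as stated: Chernoff bounds give \emph{upper} bounds on tail probabilities, whereas here you need a \emph{lower} bound on $\E\left[(X_{d+1}-(d+1)X'_N/N)_+\right]$, and the exponential moment $\E[e^{X_{d+1}/2}]$ that the natural Chernoff manipulation would require is infinite. Second, the wedge/coupling route is a legitimate alternative in principle (with $U_{d+3}=U_{d+1}+W_2$ and $V_{n-d+1}=V_{n-d-1}+W_2'$, the difference of the two probabilities is exactly a wedge probability, and the prefactor $x/d$ in the definition of $\DKhi$ conveniently equals $1$ at $x=d+1$), but you explicitly defer ``the bulk of the technical work'', namely the uniform estimate showing that this wedge probability dominates $\left[\binom{p-1}{d}(d+1)^2\right]^{-1}$ over the whole range $1\leq d\leq e^{-3/2}(p-1)$. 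Without that estimate the lemma is not proved, and carrying it out would be substantially more painful than the Jensen argument above.
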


\begin{lemma}\label{lemme_penalite1} 
For any positive number $x$ and any positive integers $d$ and $N$,
$\EDKhi(d,N,x)$ is an increasing function with respect to $d$ and a decreasing
function with respect to $N$.
\end{lemma}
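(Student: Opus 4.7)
The plan is to reduce the assertion about $\EDKhi$ to a parallel monotonicity statement for $\DKhi$. By construction $y \mapsto \DKhi(d,N,y)$ is strictly decreasing and $\EDKhi(d,N,\cdot)$ is its inverse, so a standard inverse-function argument gives the equivalences: $\EDKhi$ is increasing in $d$ (at fixed $N,x$) if and only if $\DKhi(d,N,y)$ is increasing in $d$ (at fixed $N,y$), and $\EDKhi$ is decreasing in $N$ if and only if $\DKhi(d,N,y)$ is decreasing in $N$. So the task reduces to proving these two monotonicities for $\DKhi$.

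The key step is to rewrite $\DKhi$ as a single expectation. Using the chi-square density identities $u\, f_{\chi^2_{d+2}}(u) = d\, f_{\chi^2_d}(u)$ and $v\, f_{\chi^2_N}(v) = N\, f_{\chi^2_{N+2}}(v)$, or equivalently a single integration by parts, I would verify the representation
\begin{equation*}
\DKhi(d,N,y) \;=\; \E\!\left[\left(1 - \frac{y\, V_N}{N\, U_{d+2}}\right)_{\!+}\right],
\end{equation*}
where $U_{d+2} \sim \chi^2_{d+2}$ and $V_N \sim \chi^2_N$ are independent. Indeed, expanding the positive part gives $\P(N U_{d+2} \geq y V_N) - (y/N)\, \E[(V_N/U_{d+2})\, \mathbf{1}_{N U_{d+2} \geq y V_N}]$: the first term equals $\P(F_{d+2,N} \geq y/(d+2))$, and the two density identities applied to the second term rewrite it as $(y/d)\, \P(F_{d,N+2} \geq (N+2)y/(Nd))$, matching the definition of $\DKhi$.

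Given this representation, both monotonicities follow by coupling. For the $d$-direction, the convolution $\chi^2_{d+3} \stackrel{d}{=} \chi^2_{d+2} + \chi^2_1$ yields a coupling in which $U_{d+2}$ is pointwise non-decreasing in $d$; then $y V_N/(N U_{d+2})$ is pointwise non-increasing, $w \mapsto (1-w)_+$ is non-increasing, and the expectation rises with $d$. For the $N$-direction, condition on $U_{d+2}$ and reduce to showing that for each fixed $c > 0$, $N \mapsto \E[(1 - c V_N/N)_+]$ is non-increasing. Since $V_N/N$ is the sample mean of $N$ i.i.d.\ $\chi^2_1$ variables and $w \mapsto (1 - c w)_+$ is convex, this is the classical fact that the sample mean of an exchangeable sequence decreases in the convex order as the sample size grows (a uniformly chosen size-$n$ sub-average of $Y_1,\ldots,Y_{n+1}$ has the same law as $\bar Y_n$ and conditional mean $\bar Y_{n+1}$; apply Jensen). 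The only substantive obstacle is verifying the probabilistic representation above; once that identity is in hand the remaining arguments are routine coupling and convex-order reasoning.
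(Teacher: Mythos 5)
Your proof is correct, and while its overall architecture matches the paper's --- reduce the claim to monotonicity of $\DKhi(d,N,\cdot)$ in $d$ and $N$ via the inverse-function argument, then represent $\DKhi$ as the expectation of a positive part of chi-square variables and argue by coupling --- the representation you choose is genuinely different and it changes the character of the decisive step. The paper works with $d\,\DKhi(d,N,x)=\E\big[(X_d-x X'_N/N)_+\big]$, so the $d$-dependence sits both inside the positive part and in a $1/d$ normalization; the $d$-monotonicity then requires the non-obvious pointwise inequality $d(Y-x)_+\geq\sum_{i=1}^{d+1}(Y^{(i)}-x)_+$ for leave-one-out sums (the paper's FACT~1). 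Your representation $\DKhi(d,N,y)=\E\big[(1-yV_N/(N U_{d+2}))_+\big]$ is indeed correct --- it is equivalent to the paper's by moving the density identity $d\,f_{\chi^2_{d+2}}(u)=u\,f_{\chi^2_{d}}(u)$ onto the other factor (note you stated this identity with $u$ and $d$ transposed, though your subsequent computation uses it in the correct direction) --- and it isolates the $d$-dependence in the denominator $U_{d+2}$ with no prefactor, so the $d$-monotonicity collapses to an immediate stochastic-dominance coupling $U_{d+3}\stackrel{d}{=}U_{d+2}+\chi^2_1$. For the $N$-direction the two proofs are essentially the same: the paper's FACT~2, established via $\sum_{i=1}^{N+1}(Y^{(i)}-Nx)_+\geq N(Y-(N+1)x)_+$, is precisely the pointwise form of your sub-averaging/Jensen step for the convex function $w\mapsto(1-cw)_+$. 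Net effect: your route buys a much simpler $d$-direction at the modest cost of verifying a slightly less standard representation of $\DKhi$, whereas the paper's representation is the one reused elsewhere (e.g.\ in the proof of Lemma~\ref{lemme_minoration_penalite_seul}).
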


\begin{lemma}\label{lemme_penalite2} 
For any integer $d\geq 2$, the function
$$\frac{\mathbb{E}\left[(X_d-x\frac{X_N}{N})_+\right]}{\mathbb{E}\left[
(X_2-x)_+\right]}$$ is increasing with respect to $x$ as soon as $x\geq d$.
\end{lemma}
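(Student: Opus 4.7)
The plan is to reduce the monotonicity of the ratio to a tail inequality for a Beta distribution. Write $F(x) := \mathbb{E}[(X_d - x X_N/N)_+]$ and $G(x) := \mathbb{E}[(X_2 - x)_+]$; since $X_2$ is exponential with mean $2$, a direct computation gives $G(x) = 2 e^{-x/2}$, so the ratio $F/G$ is non-decreasing in $x$ if and only if $F(x) + 2 F'(x) \geq 0$. Differentiating under the expectation gives $F'(x) = -\mathbb{E}[(X_N/N) \mathbf{1}_{X_d > x X_N/N}]$, and the target rewrites as
\[
\mathbb{E}\bigl[(X_d - (x+2)\,X_N/N)\,\mathbf{1}_{X_d > x X_N/N}\bigr] \geq 0.
\]

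I would then pass to Beta coordinates via the classical independence $X_d = BT$, $X_N = (1-B)T$, where $T \sim \chi^2_{d+N}$ is independent of $B \sim \mathrm{Beta}(d/2, N/2)$. Setting $a := x/(N+x)$ and $c := (x+2)/(N+x+2)$, a short calculation shows $\{X_d > x X_N/N\} = \{B > a\}$ and $X_d - (x+2) X_N/N = (T/N)(N+x+2)(B - c)$. Independence of $T$ from $B$ together with $T > 0$ almost surely reduces the claim to the single real inequality
\[
\mathbb{E}[B \mid B > a] \geq c, \qquad \text{for } x \geq d.
\]
The calibration is very natural: $\mathbb{E}[B] = d/(d+N)$, so the hypothesis $x \geq d$ is exactly $a \geq \mathbb{E}[B]$, while one always has $a < c$. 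Thus we are conditioning $B$ on values at least its mean and must show the conditional mean is pushed up by at least the explicit amount $c - a = 2N/[(N+x)(N+x+2)]$.

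For the last step I would invoke the Beta recursions $\mathbb{E}[B \mathbf{1}_{B > a}] = \frac{d}{d+N}\mathbb{P}(B' > a)$ (with $B' \sim \mathrm{Beta}((d+2)/2, N/2)$) and $\mathbb{P}(B > a) = \mathbb{P}(B' > a) - \frac{2 a (1-a)}{d} f_B(a)$. Substituting both into the desired inequality and collecting terms reduces it, after elementary algebra, to a Mill's-ratio bound of the form
\[
\mathbb{P}(B' > a) \;\leq\; \frac{2(x+2)(d+N)}{d\,N\,(x+2-d)}\,a\,(1-a)\,f_B(a).
\]
Since $d \geq 2$, the Beta$(d/2, N/2)$ density is log-concave and hence has non-decreasing hazard rate; combined with the relation $f_{B'}(b) = \frac{d+N}{d}\, b\, f_B(b)$, integrating the pointwise comparison from $a$ to $1$ over $[a,1]$ yields the required bound with the sharp constant, using $a \geq \mathbb{E}[B]$ to position $a$ in the regime where the hazard-rate estimate is tight.

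The main obstacle is precisely this final Mill's-ratio comparison. The desired inequality becomes essentially tight both at the boundary $x = d$ and in the limit $x \to \infty$, so any argument losing even a constant factor will fail. The two hypotheses $d \geq 2$ and $x \geq d$ enter together: the former gives log-concavity of $f_B$ (hence monotone hazard rate), while the latter positions $a$ at or above the mean of $B$, which is exactly where the sharp one-sided tail bound holds.
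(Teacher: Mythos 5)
Your two reductions are correct and they take a genuinely different route from the paper. After the shared first step (monotonicity of the ratio is equivalent to $F(x)+2F'(x)\geq 0$, i.e.\ to $\mathbb{E}[(X_d-(x+2)X_N/N)\mathbf{1}_{X_d>xX_N/N}]\geq 0$, which is exactly the paper's $\Psi(x)\geq 0$), you pass to the factorization $X_d=BT$, $X_N=(1-B)T$ and reduce to $\mathbb{E}[B\mid B>a]\geq c$, then via the two Beta recursions to the tail bound $\mathbb{P}(B'>a)\leq \frac{2(x+2)(d+N)}{dN(x+2-d)}\,a(1-a)f_B(a)$. I checked this algebra (including $c-\frac{d}{d+N}=\frac{N(x+2-d)}{(N+x+2)(d+N)}$ and $c-a=\frac{2N}{(N+x)(N+x+2)}$) and it is right. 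The paper never leaves $\chi^2$ coordinates: it shows $\Psi(x)\to 0$ as $x\to\infty$ and $\Psi'(x)\leq 0$ on $[d,\infty)$, using only the integration-by-parts bound $\mathbb{P}(X_d>y)\geq 2f_{X_d}(y)$ (valid for $d\geq 2$) and an explicit Gamma-integral evaluation in which $x\geq d$ enters only at the last line.

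The gap is the final step, which you yourself flag as the main obstacle: the Mills-ratio bound is asserted, not proved, and the tool you invoke does not appear to deliver it. First, log-concavity of the $\mathrm{Beta}(d/2,N/2)$ density requires $N\geq 2$ as well as $d\geq 2$ (for $N=1$ the density blows up at $1$), and $N$ is an arbitrary positive integer here. Second, and more seriously, an increasing hazard rate yields an \emph{upper} bound on the mean residual life, $\mathbb{E}[B-a\mid B>a]\leq \mathbb{P}(B>a)/f_B(a)$, whereas you need a \emph{lower} bound on $\mathbb{E}[B\mid B>a]$; and the natural Mills-ratio upper bound for the Beta tail (integrate $\frac{d}{db}[-b^{d/2}(1-b)^{N/2}]$ over $[a,1]$) gives $\mathbb{P}(B'>a)\leq \frac{2(d+N)(N+x)}{dN(x-d)}\,a(1-a)f_B(a)$, whose constant exceeds the required $\frac{2(d+N)(x+2)}{dN(x+2-d)}$ for every $x>d$ (the needed comparison $(N+x)(x+2-d)\leq (x+2)(x-d)$ fails identically) and degenerates at $x=d$. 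Since your target inequality is asymptotically an equality (take $d=2$, $x=d$, $N\to\infty$: both sides tend to $2e^{-1}$), there is no constant to spare, so the sketch as written does not close. A completion compatible with your setup does exist: from $\mathbb{P}(X_d>y)\geq 2f_{X_d}(y)$ one gets $\mathbb{E}[(X_d-y)_+]=\int_y^\infty\mathbb{P}(X_d>u)\,du\geq 2\mathbb{P}(X_d>y)$ for $d\geq 2$, hence $\Psi(x)\geq 2\,\mathbb{E}\big[(1-X_N/N)\,\mathbb{P}(X_d>xX_N/N\mid X_N)\big]\geq 0$ by Chebyshev's association inequality applied to two non-increasing functions of $X_N$ --- but that is a different argument from the one you propose.
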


\subsubsection{Proof of Lemma \ref{lemme_penalite_general}}
Let us write $L_1=\log\left(\binom{p-1}{d_1}\right)$ and
$L_2=\log\left(\binom{p-1}{d_2}\right)$. Lemma \ref{lemme_penalite1} ensures
that
\begin{eqnarray}\label{lemme_pena_minoration_2}
\EDKhi\left(d_1+1,n-d_1-1,e^{-L_1}\right)\geq
\EDKhi\left(d_2+1,n-d_2-1,e^{-L_1}\right)\ .
\end{eqnarray}
Let $x_1\geq x_2$ be two positive numbers larger than some integer $d_2+1$. By
Lemma \ref{lemme_penalite2}, it holds that 
\begin{eqnarray*}
\frac{\DKhi(d_2+1,n-d_2-1,x_1)}{\DKhi(d_2+1,n-d_2-1,x_2)}\geq
\frac{\mathbb{E}\left[(X_2-x_1)_+\right]}{\mathbb{E}\left[(X_2-x_2)_+\right]} =
e^{-(x_1-x_2)/2}\ .
\end{eqnarray*}
By Lemma \ref{lemme_minoration_penalite_seul}, $\EDKhi(d_2+1,n-d_2-1,e^{-L_2})$
is larger than $d_2+1$. Setting $x_1=\EDKhi(d_2+1,n-d_2-1,e^{-L_1})$  and
$x_2=\EDKhi(d_2+1,n-d_2-1,e^{-L_2})$, we obtain
\begin{equation}\label{lemme_pena_minoration_1}
\EDKhi(d_2+1,n-d_2-1,e^{-L_1})- \EDKhi(d_2+1,n-d_2-1,e^{-L_2})\geq 2(L_1-L_2)\ ,
\end{equation}
for $d_2\geq 1$. Gathering  the bounds (\ref{lemme_pena_minoration_2}),
(\ref{lemme_pena_minoration_1}) with the definition (\ref{definition_penalite})
of the penalty enables to conclude
\begin{equation*}
\pen(d_1)-\pen(d_2)\geq 2K(d_1-d_2) \log\left(\frac{p-d_1}{d_1}\right)\ .
\end{equation*}

\subsubsection{Proof of Lemma \ref{lemme_minoration_penalite_seul}}
We write henceforth $X_{d}$ and $X'_{N}$ for two independent $\chi^2$ 
variables with $d$ and $N$ degrees of freedom. By Jensen inequality, we get
\begin{eqnarray*}
d\times \DKhi(d,N,x)&=&\E\cro{\pa{X_{d}-x\, {X'_{N}\over N}}_{+}}\\
&\geq& \E\cro{\pa{X_{d}-x}_{+}}\geq \E\cro{\pa{X_{2}-x}_{+}}=2e^{-x/2}.
\end{eqnarray*}
for any $x>0$ and any $d\geq 2$.
Setting $x=\EDKhi(d,N,e^{-L})$ with $L\geq0$, we obtain
$$\EDKhi(d,N,e^{-L})\geq 2L-2\log(d),\quad \textrm{for } d\geq 2.$$
\begin{eqnarray*}
\EDKhi\left[d+1,n-d-1,\left[\binom{p-1}{d}(d+1)^2\right]^{-1}\right] & \geq 
&2\log\binom{p-1}{d}\ , 
\end{eqnarray*}
which is larger than $2d\log[(p-1)/(ed)]$. This allows to conclude.

\subsubsection{ Proof of Lemma \ref{lemme_penalite1}}
By definition (\ref{definition_penalite}) of the function $\EDKhi$, we only have
to prove that $\DKhi(d,N,x)$ is increasing with respect to $d$ and decreasing
with respect to $n$.\\

\noindent
Conditioning on $X_N$ (resp. $X_d$) it suffices to prove the two following
facts:

\noindent
FACT 1: Let $d$ be a positive integer. For any positive number $x$, 
\begin{eqnarray*}
d\mathbb{E}\left[(X_{d+1}-x)_+\right]\geq
(d+1)\mathbb{E}\left[(X_{d}-x)_+\right]\ .
\end{eqnarray*}
        
\noindent
FACT 2: Let $N$ be a positive integer. For any positive numbers $x$ and $x'$, 
\begin{eqnarray*}
\mathbb{E}\left[\left(x'-x\frac{X_N}{N}\right)_+\right]\geq
\mathbb{E}\left[\left(x'-x\frac{X_{N+1}}{N+1}\right)_+\right]\ .
\end{eqnarray*}

\noindent
{\bf Proof of FACT 1}. Let $(Z_1,\ldots ,Z_{d+1})$ be $d+1$ independent $\chi^2$
random variables with $1$ degree of freedom. Let $Y=\sum_{i=1}^{d+1}Z_i$ and for
any $i\in \{1,\ldots d+1\}$, let $Y^{(i)}$ be the sum $Y^{(i)}= \sum_{j\neq
i}Z_j$. The variable $Y$ follows a $\chi^2$ distribution with $d+1$ degrees of
freedom, while the variables $Y^{(i)}$ follow $\chi^2$ distribution with $d$
degrees of freedom. It holds that
\begin{eqnarray}\label{ineq_fact1}
 d\left(Y-x\right)_+\geq \sum_{i=1}^{d+1}\left(Y^{(i)}-x\right)_{+}\ .
\end{eqnarray}
Indeed, if all the variables $Y^{(i)}$ are larger than $x$, one observes that
$d\left(Y-x\right)_+= d(\sum_{i=1}^{d+1}Z_i-dx)$ while the second term equals
$d\sum_{i=1}^{d+1}Z_i-d(d+1)x$. If some of the variables $Y^{(i)}$ are smaller
than $x$, it is sufficient to note that the variables $Y^{(i)}$ are smaller than
$Y$. We prove FACT 1 by integrating the inequality (\ref{ineq_fact1}).\\

 \noindent
{\bf Proof of FACT 2}.
It is sufficient to prove that for any positive number $x$,
\begin{eqnarray*}
\mathbb{E}\left[\left(x-\frac{X_N}{N}\right)_+\right]\geq
\mathbb{E}\left[\left(x-\frac{X_{N+1}}{N+1}\right)_+\right]\ .
\end{eqnarray*}
Observe that $\mathbb{E}\left[\left(x-\frac{X_N}{N}\right)_+\right]= (x-1)+
\mathbb{E}\left[\left(\frac{X_N}{N}-x\right)_+\right]$. Hence, it remains to
prove that
\begin{eqnarray}\label{ineq_fact2}
 (N+1)\mathbb{E}\left[\left(X_N-Nx\right)_+\right]\geq
N\mathbb{E}\left[\left(X_{N+1}-(N+1)x\right)_+\right]\ .
\end{eqnarray}
As in the proof of FACT 1, let $(Z_1,\ldots ,Z_{d+1})$ be $d+1$ independent
$\chi^2$ random variables with $1$ degree of freedom. Let
$Y=\sum_{i=1}^{d+1}Z_i$ and for any $i\in \{1,\ldots d+1\}$, let $Y^{(i)}$ be
the sum $Y^{(i)}= \sum_{j\neq i}Z_i$. It holds that
\begin{eqnarray}\label{ineq2_fact2}
  \sum_{i=1}^{N+1}\left(Y^{(i)}-Nx\right)_{+}\geq N \left(Y-(N+1)x\right)_{+} \
.
\end{eqnarray}
This bound is trivial if $Y\leq (N+1)x$. If $Y$ is larger than $(N+1)x$, then
the second term equals $(N+1)\sum_{i=1}^{N+1}(Y^{(i)}- Nx)$, which is clearly
smaller than the first term. Integrating the bound (\ref{ineq2_fact2}) enables
to prove (\ref{ineq_fact2}) and then FACT 2.

\subsubsection{Proof of Lemma \ref{lemme_penalite2}}
We show that the derivate of the function ~\\
$\mathbb{E}\left[(X_d-x\frac{X_N}{N})_+\right]/\mathbb{E}\left[(X_2-x)_+\right]$
in non-negative for any $x\geq d$. Thus, we have to prove the following
inequality:
\begin{eqnarray*}
\frac{\mathbb{E}\left[\left(X_d- x
\frac{X_{N}}{N}\right)_+\right]}{\mathbb{E}\left[\frac{X_N}{N}\mathbf{1}_{
X_d\geq x\frac{X_{N}}{N}}\right]} \geq \frac{\mathbb{E}\left[\left(X_2- x
\right)_+\right]}{\mathbb{P}(X_2\geq x)}=2\ .
\end{eqnarray*}
Hence, we aim at proving that the function
\begin{eqnarray*}
\Psi(x)= \mathbb{E}\left[\left(X_d- x \frac{X_{N}}{N}\right)_+\right]- 2
\mathbb{E}\left[\frac{X_N}{N}\mathbf{1}_{X_d\geq x\frac{X_{N}}{N}}\right]
\end{eqnarray*}
is positive. Observe that $\Psi(x)$ converges to $0$ when $x$ goes to infinity.
Let us respectively note $f_{X_d}(t)$ and $f_{\frac{X_N}{N}}(t)$ the densities
of $X_d$ and $X_N/N$.
\begin{eqnarray*}
\Psi'(x)=\int_{t=0}^{\infty}t\left[2tf_{X_d}(xt)-\int_{u=xt}^{\infty}f_{X_d}
(u)du\right]f_{\frac{X_N}{N}}(t)dt\ .
\end{eqnarray*}
Integrating by part the density of a $\chi^2$ distribution, we get the lower
bound
\begin{eqnarray*}
 \int_{u=xt}^{\infty}f_{X_d}(u)du\geq
\frac{(1/2)^{d/2}}{\Gamma(d/2)}2(xt)^{d/2-1}e^{-xt/2}\ .
\end{eqnarray*}
Finally, we upper bound $\Psi'(x)$.
\begin{eqnarray*}
 \Psi'(x) &\leq
&\frac{(1/2)^{d/2-1}}{\Gamma(d/2)}\int_{t=0}^{\infty}t(xt)^{d/2-1}e^{-xt/2}
(t-1)f_{\frac{X_N}{N}}(t)dt\\
& \leq&
\frac{(1/2)^{(N+d)/2-1}}{\Gamma(d/2)\Gamma(N/2)}N^{N/2}x^{d/2-1}\int_{t=0}^{
\infty}t^{d/2}(t-1)t^{N/2-1}e^{-(x+N)t/2}dt\\
&\leq&
\frac{2N^{N/2}x^{d/2-1}}{\Gamma(d/2)\Gamma(N/2)(x+N)^{(d+N)/2}}\int_{t=0}^{
\infty}t^{(d+N)/2-1}\left(\frac{2t}{x+N}-1\right)e^{-t}dt\\
&\leq &\frac{2N^{N/2}x^{d/2-1}}{\Gamma(d/2)\Gamma(N/2)(x+N)^{(d+N)/2}}
\left[\frac{2\Gamma\left(\frac{d+N}{2}+1\right)}{x+N}-
\Gamma\left(\frac{d+N}{2}\right)\right]\\
&\leq &
\frac{2N^{N/2}x^{d/2-1}\Gamma\left(\frac{d+N}{2}\right)}{
\Gamma(d/2)\Gamma(N/2)(x+N)^{(d+N)/2}}\left[\frac{d+N}{x+N}-1\right]\leq 0\ ,
\end{eqnarray*}
since $x\geq d$. Hence, $\Psi$ is decreasing to $0$ for $x$ larger than $d$ and
it is therefore non-negative.

\section{Details for the family $\widehat{\G}$ of candidate graphs}\label{section_family_details}

\subsection{CO1 family $\widehat{\G}_{\CO 1}$}

The following construction of the family $\widehat\G_{01}$ derives
from the estimation procedure of Wille and B\"uhlmann~\cite{WB06}. We write $P(a,b|c)$ for the $p$-value of the
likelihood ratio test of the hypothesis "$R_{a,b|c}=0$" and set 
$$P_{\max}(a,b)=\max\left\{P(a,b|c),\ 
c\in\{\emptyset\}\cup\Gamma\setminus\{a,b\}\right\}\,.$$
For any $\alpha>0$, the graph $\widehat{G}_{01,\alpha}$ is defined by
$$a\stackrel{\widehat{G}_{01,\alpha}}{\sim}b\ \ \Longleftrightarrow \ \
P_{\max}(a,b)\leq \alpha$$
and the family $\widehat{\G}_{\CO1}$ is  the  family of nested graphs
$$ \widehat{\mathcal{G}}_{\CO 1}= \left\{\widehat{G}_{01,\alpha},\ \alpha>0
\text{ and } \degr(\widehat{G}_{01,\alpha})\leq D \right\}.$$

\begin{center}
\fbox{
\begin{minipage}{0.9\textwidth}
{\bf C01 Algorithm}
\begin{enumerate}
 \item Compute the $p(p-1)/2$ values $P_{\text{max}}(a,b)$.
\item Order them.
\item Extract from these values the  nested graphs $\ac{\widehat
G_{01,\alpha}:\alpha>0}$. 
\item Stop when the degree becomes larger than $D$.
\end{enumerate}
\end{minipage}
}
\end{center}

\subsection{Lasso-And family $\widehat{\G}_{\LA}$}

From a computational point of view, the  family $\widehat{\mathcal{G}}_{\LA}$
can be efficiently computed with the LARS-lasso algorithm. 
The optimization problem~(\ref{lasso_general}) is broken into the $p$
independent  minimization problems
\begin{equation}\label{lasso_particulier}
 \widehat{\theta}_a^{\lambda}=\arg\!\min\left\{\|{\X}_a-{\X}
v\|^2+\lambda\|v\|_1: \ v\in\R^p\textrm{ and } v_{a}=0\right\}, \ \text{for any
$a\in\Gamma$,}
\end{equation}
with $\|v\|_1=\sum_{b=1}^p|v_{b}|$. When $\lambda$ decreases, the support of
$\widehat \theta_{a}^{\lambda}$ is piecewise constant and the LARS-lasso
algorithm provides the sequences $(\lambda_a^l)_{l\geq 1}$ of the values of
$\lambda$ where the support of $\widehat\theta^\lambda_{a}$ changes, as well as
the sequence of the supports $\pa{\textrm{supp}(\theta^{\lambda_a^l})}_{l\geq
1}$.  Then, we gather these $p$ sequences as described
in the algorithm below.

 Given $\lambda>0$, we define the graph
$\widehat{G}^{\lambda}_{\text{and}}$ by
$$a\stackrel{\widehat{G}^{\lambda}_{\text{and}}}{\sim}b\ \ \Longleftrightarrow \
\ \widehat{\theta}_{a,b}^{\lambda}\neq 0 \textrm{ \underline{and} }
\widehat{\theta}_{b,a}^{\lambda}\neq0\,.$$ 

 Finally, we define the family
$\widehat{\mathcal{G}}_{\LA}$ as the set of graphs
$\widehat{G}^{\lambda}_{\text{and}}$ with $\lambda$ large enough to ensure that
$\degr(\widehat{G}^{\lambda}_{\text{and}})\leq D$, viz
\begin{eqnarray*}
\widehat{\mathcal{G}}_{\LA} = \left\{\widehat{G}^{\lambda}_{\text{and}}\ ,
\lambda > \widehat{\lambda}_{\text{and},D}\right\},
 &\text{ where}& \widehat{\lambda}_{\text{and},D}= \sup\left\{\lambda,\
\degr(\widehat{G}^{\lambda}_{\text{and}})>D\right\}.
\end{eqnarray*}

\begin{center}

\fbox{
\begin{minipage}{0.9\textwidth}
{\bf LA Algorithm}
\begin{enumerate}
\item Compute   with LARS-lasso the
$\pa{\lambda_a^l,\textrm{supp}(\widehat\theta^{\lambda_a^l})}_{l\geq 1}$ for all
$a\in\Gamma$.
\item Order the sequence $\ac{\lambda_a^l: a\in\Gamma,\ l\geq 1}$.
\item Compute $\widehat{G}^{\lambda^l_{a}}_{\text{and}}$ for all 
$\lambda^l_{a}>\widehat{\lambda}_{\text{and},D}$.
\end{enumerate}
\end{minipage}
}

\end{center}

\subsection{Adaptive lasso family $\widehat{\G}_{\EW}$} 

 To build the family $\widehat\G_{\EW}$ we start by computing the
 Exponential Weight estimator $\widehat\theta^{EW}$. For each $a\in\Gamma$, we
set $H_{a}=\ac{v\in\R^p: v_{a}=0}$ and 
 \begin{equation}\label{EW}
\widehat\theta^{EW}_{a}=\int_{H_{a}}v\, e^{-\beta\|\X_{a}-\X v\|^2_{n}}\,
\prod_{j}\pa{1+(v_{j}/\tau)^2}^{-\alpha}\, {dv \over \mathcal{Z}_{a}}\,,
\end{equation}
 with
$\mathcal{Z}_{a}=\int_{H_{a}}e^{-\beta\|\X_{a}-\X v\|^2_{n}}\,
\prod_{j}\pa{1+(v_{j}/\tau)^2}^{-\alpha}\, dv$  and  $\alpha,\beta,\tau>0$.
We note that $\widehat\theta^{EW}_{a}$ with $\beta=n/(2\sigma_{a}^2)$ and
$\sigma_{a}^2=\mathrm{var}(X_{a}\, |\,X_{-a})$ is simply  the Bayesian estimator
of $\theta_{a}$ with prior distribution $d\pi(v)\propto
\prod_{j}\pa{1+(v_{j}/\tau)^2}^{-\alpha}\, dv$ on $H_{a}$. 
In the Gaussian setting,  
Dalalyan and Tsybakov~\cite{DT08} give a sharp and assumption-free sparse
inequality for $\widehat\theta^{EW}_{a}$ with $\beta\leq n/(4\sigma_{a}^2)$, see
Corollary~4 in Dalalyan and Tsybakov. 

The construction of $\widehat\G_{\EW}$ is now similar to the construction of
$\widehat\G_{\LA}$. 
For any $\lambda>0$ we set
\begin{eqnarray}\label{adaptive_lasso_general}
 \widehat{\theta}^{EW,\lambda}=\arg\!\min\left\{\|{\X}-{\X}\theta'\|_{n\times
p}^2+\lambda
\|\theta'/ \widehat{\theta}^{EW}\|_1:\theta'\in\Theta\right\}\ ,
\end{eqnarray}
and we define the graph $\widehat{G}^{\EW,\lambda}_{\text{or}}$
by setting an edge between $a$ and $b$ if either
 $\widehat{\theta}_{b,a}^{EW,\lambda}$ \underline{or}
$\widehat{\theta}_{a,b}^{EW,\lambda}$ is non-zero:
 $$a\stackrel{\widehat{G}^{\EW,\lambda}_{\text{or}}}{\sim}b\ \
\Longleftrightarrow \ \ \widehat{\theta}_{a,b}^{\EW,\lambda}\neq 0 \ \textrm{
\underline{or} } \ \widehat{\theta}_{b,a}^{\EW,\lambda}\neq0\,.$$

 Finally, the family  $\widehat{\mathcal{G}}_{\EW}$ is given by
\begin{eqnarray*}
\widehat{\mathcal{G}}_{\EW} = \left\{\widehat{G}^{\EW,\lambda}_{\text{or}},\
\lambda > \widehat{\lambda}^{EW}_{\text{or},d}\right\}, 
&\text{where}& \widehat{\lambda}^{\EW}_{\text{or},D}= \sup\left\{\lambda,\
\degr(\widehat{G}^{\EW,\lambda}_{\text{or}})>D\right\}.
\end{eqnarray*}

The Exponential Weight estimator $\widehat\theta^{EW}$ can be computed with a
Langevin Monte-Carlo algorithm. We refer to \cite{DT09}
for the details. Once $\widehat\theta^{EW}$ is computed, the family
$\widehat{\mathcal{G}}_{\EW}$ is obtained as before with the help of the
LARS-lasso algorithm.

As for the family $\widehat{\mathcal{G}}_{\LA}$, the collection $\widehat{\mathcal{G}}_{\EW}$ is computed efficiently by breaking down the criterion (\ref{adaptive_lasso_general}) into $p$ independent minimization problems.
When $\lambda$ decreases, the support of
$\widehat \theta_{a}^{\EW,\lambda}$ is piecewise constant and the LARS-lasso
algorithm provides the sequences $(\lambda_a^{\EW,l})_{l\geq 1}$ of the values of
$\lambda$ where the support of $\widehat\theta^{\EW,\lambda}_{a}$ changes. Then, we gather these $p$ sequences as described
in the algorithm below.

\begin{center}

\fbox{
\begin{minipage}{0.94\textwidth}
{\bf EW Algorithm}
\begin{enumerate}
\item Compute $\widehat\theta^{EW}$ with a Langevin Monte-Carlo algorithm.
\item Compute   with LARS-lasso the
$\pa{\lambda_a^{\EW,l},\textrm{supp}(\widehat\theta^{\lambda_a^{\EW,l}})}_{l\geq 1}$ for all
$a\in\Gamma$.
\item Order the sequence $\ac{\lambda_a^{\EW,l}: a\in\Gamma,\ l\geq 1}$.
\item Compute $\widehat{G}^{\EW,\lambda^l_{a}}_{\text{or}}$ for all 
$\lambda^{\EW,l}_{a}>\widehat{\lambda}^{\EW}_{\text{or},D}$.
\end{enumerate}
\end{minipage}
}
\end{center}

\subsection{Quasi-exhaustive family $\widehat{\G}_{\QE}$}

\begin{center}
\fbox{
\begin{minipage}{0.9\textwidth}
{\bf QE Algorithm}
\begin{enumerate}
 \item  Compute $\widehat{\nei}(a)$  for all $a\in\Gamma$.
\item Compute the graphs $\widehat{G}_{K,\text{and}}$ and
$\widehat{G}_{K,\text{or}}$.
\item Work out the family $\widehat{\mathcal{G}}_{\QE}$.
\end{enumerate}
\end{minipage}
}
\end{center}


\bibliographystyle{plain}

\bibliography{estimation}

\end{document}